\newcommand\ip[2]{\langle #1, #2\rangle}
\newcommand\dico{\Phi}
\newcommand\atom{\phi}
\newcommand\pdico{\Psi}
\newcommand\patom{\psi}
\newcommand\proj{R}
\newcommand\diag{\operatorname{diag}}
\newcommand\signop{\operatorname{sign}}
\newcommand\argmax{\operatorname{argmax}}
\newcommand\argmin{\operatorname{argmin}}
\newcommand{\R}{{\mathbb{R}}}
\newcommand{\E}{{\mathbb{E}}}
\newcommand{\I}{{\mathbb{I}}}
\renewcommand{\P}{{\mathbb{P}}}
\newcommand{\weights}{W}
\newcommand{\transp}{^{\ast}}
\newcommand{\Hij}{\vec{H}_{ij}}
\newcommand{\Hji}{\vec{H}_{ji}}
\newcommand{\Hjk}{\vec{H}_{jk}}
\newcommand{\Hkj}{\vec{H}_{kj}}
\begin{document}

\title{Submatrices with non-uniformly selected random supports and insights into sparse approximation}



\author{\name Simon Ruetz \email simon.ruetz@uibk.ac.at\\
	\name Karin Schnass \email karin.schnass@uibk.ac.at\\
	\addr  
	University of Innsbruck\\
	Technikerstra\ss e 13\\
	6020 Innsbruck, Austria}

\editor{}

\maketitle

\begin{abstract}
In this paper we derive tail bounds on the norms of random submatrices with non-uniformly distributed supports. We apply these results to sparse approximation and conduct an analysis of the average case performance of thresholding, Orthogonal Matching Pursuit and Basis Pursuit. As an application of these results we characterise sensing dictionaries to improve average performance in the non-uniform case and test their performance numerically.
\end{abstract}

\begin{keywords}
\noindent random submatrices, non-uniform sampling, matrix Chernoff, sparse approximation
\end{keywords}

\section{Introduction}\label{sec:intro}

\noindent
\textbf{Motivation:} 
In sparse approximation, the goal is to find a sparse solution to an underdetermined system of linear equations. A signal $y \in \R^d$ is assumed to be a linear combination of a small number $S \ll d$ of elements $\atom_i$, called atoms, out of a larger set, called the dictionary. Denoting the dictionary by $\dico = (\atom_1,\ldots,\atom_K)  \in \R^{d \times K}$ and by $\dico_I$ the restriction to the columns indexed by the set $I$, called the support, one assumes that
\[
y \approx \sum_{k \in I}\atom_k x_k = \dico_I x_I \quad \text{s.t.} \quad |I| = S.
\]
The sparse approximation problem amounts to finding the vector $x$ and its support $I$ given the dictionary $\dico$ and signal $y$. In general, this is a NP-hard optimisation problem, hence sparse approximation algorithms such as thresholding, Orthogonal Matching Pursuit (OMP) and Basis Pursuit (BP) were proposed. It turns out that in order to prove support recovery guarantees for these algorithms, information about the extreme singular values of $\dico_I$ is needed. \\
Let $\| \cdot \|_{2,2}$ denote the operator norm and $\mathbb{I}$ the identity matrix. Deterministic methods to bound $\| \dico_I\transp \dico_I - \mathbb{I} \|_{2,2}$ for \textit{arbitrary} supports $I$ are of limited use since the restrictions on the dictionary $\Phi$ are too stringent. This started the study of random collections of columns of the dictionary $\dico$. In~\cite{tr08} it was first shown that under rather mild conditions on the dictionary $\dico$, \textit{most} subdictionaries $\dico_I$ are close to an isometry - i.e. $\| \dico_I \transp \dico_I - \mathbb{I}\|_{2,2} \leq \vartheta_0 < 1$, with later improvements in~\cite{chda12}. So far, all available results on the conditioning of random subdictionaries rely on the supports $I$ to be drawn from the uniform distribution. Unfortunately this assumption is rarely satisfied for practically relevant signal classes, where some atoms of the underlying dictionary are usually more likely to appear in a sparse representation than others.\\ 
To demonstrate this non-homogeneity, we conduct the following small experiment. We take the 2D Haar-Wavelet decomposition of all normalised $64 \times 64$ patches from the image \textit{Peppers} and apply a threshold\footnote{The threshold is inspired by the expected size of the largest inner product of a wavelet with noise drawn uniformly at random from the unit sphere.} of $\sqrt{\log(d)/d}/6$ for $d=64^2$ to the coefficients to get sparse approximations. We then count how often each atom has a non-zero coefficient to get a proxy for its inclusion probability in a sparse support $I$. Figure~\ref{wav_coeff} shows the relative frequency of each element of the 2D Haar-Wavelet basis.
\begin{figure}[ht]
\begin{subfigure}[c]{0.32\linewidth}  
  \centering
  \includegraphics[width=\linewidth]{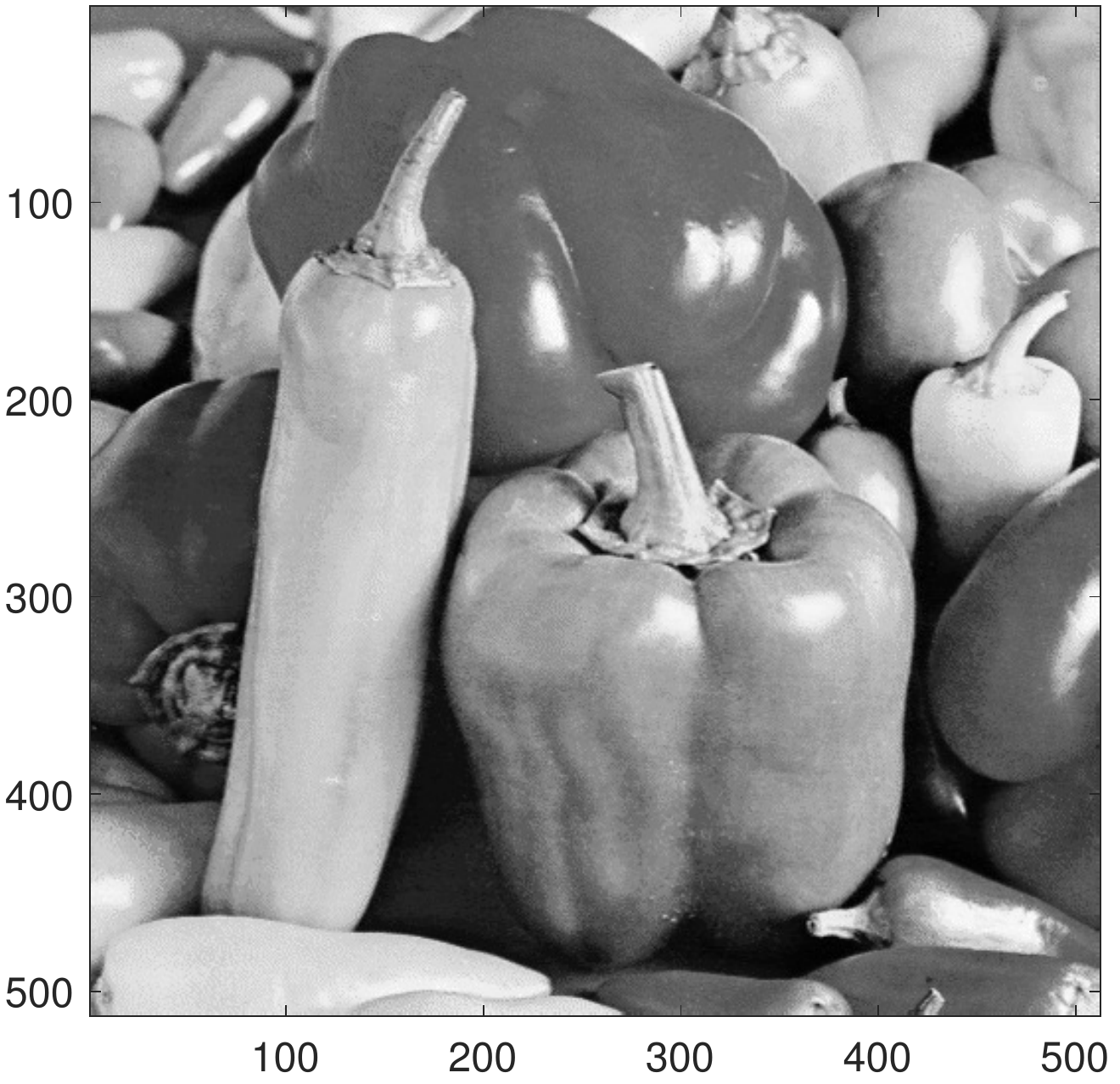}
  \caption{}
\end{subfigure}
\begin{subfigure}[c]{0.33\linewidth}  
  \centering
  \includegraphics[width=\linewidth]{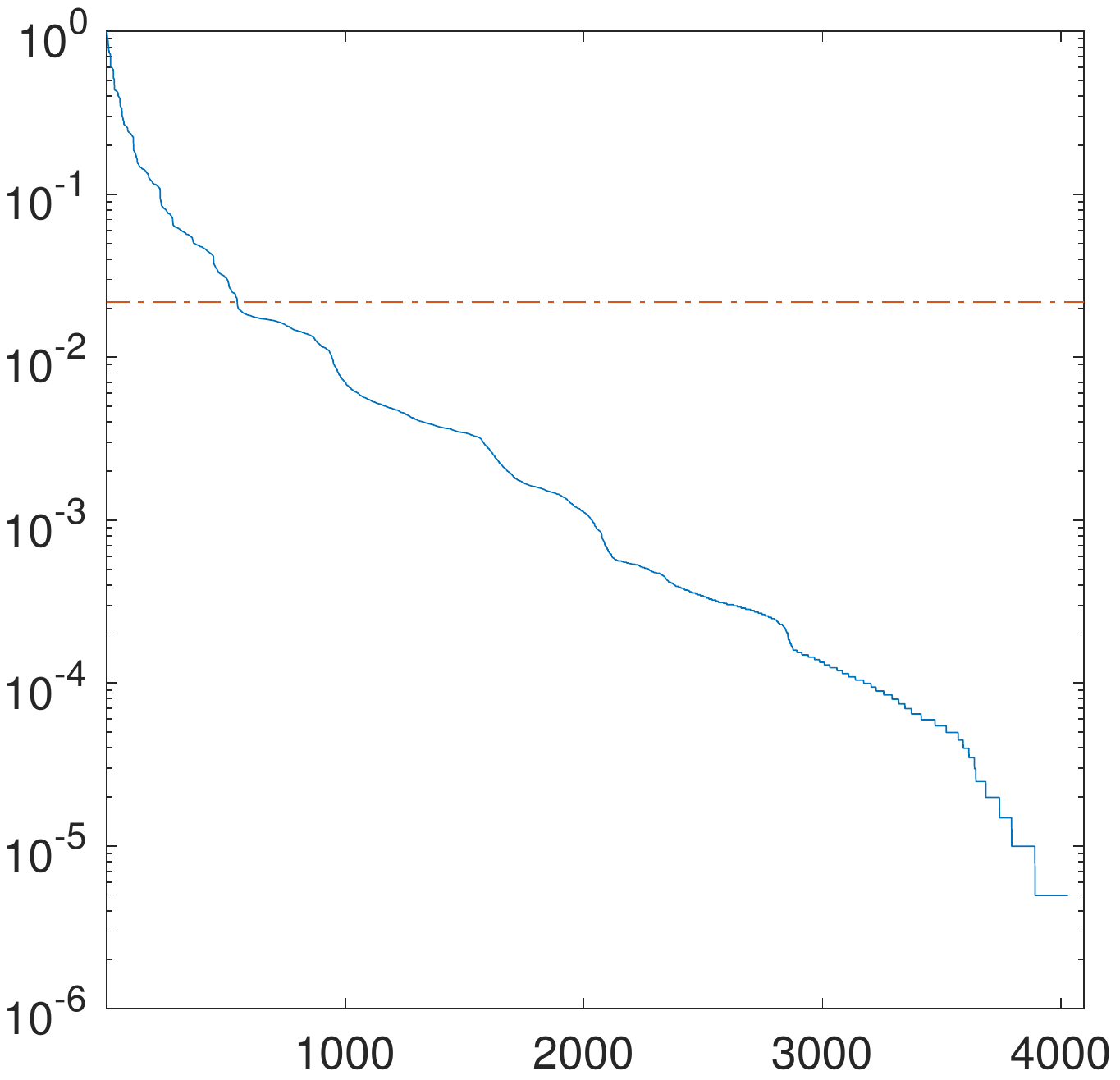}
  \caption{}
\end{subfigure}
\begin{subfigure}[c]{0.32\linewidth}  
  \centering
  \includegraphics[width=\linewidth]{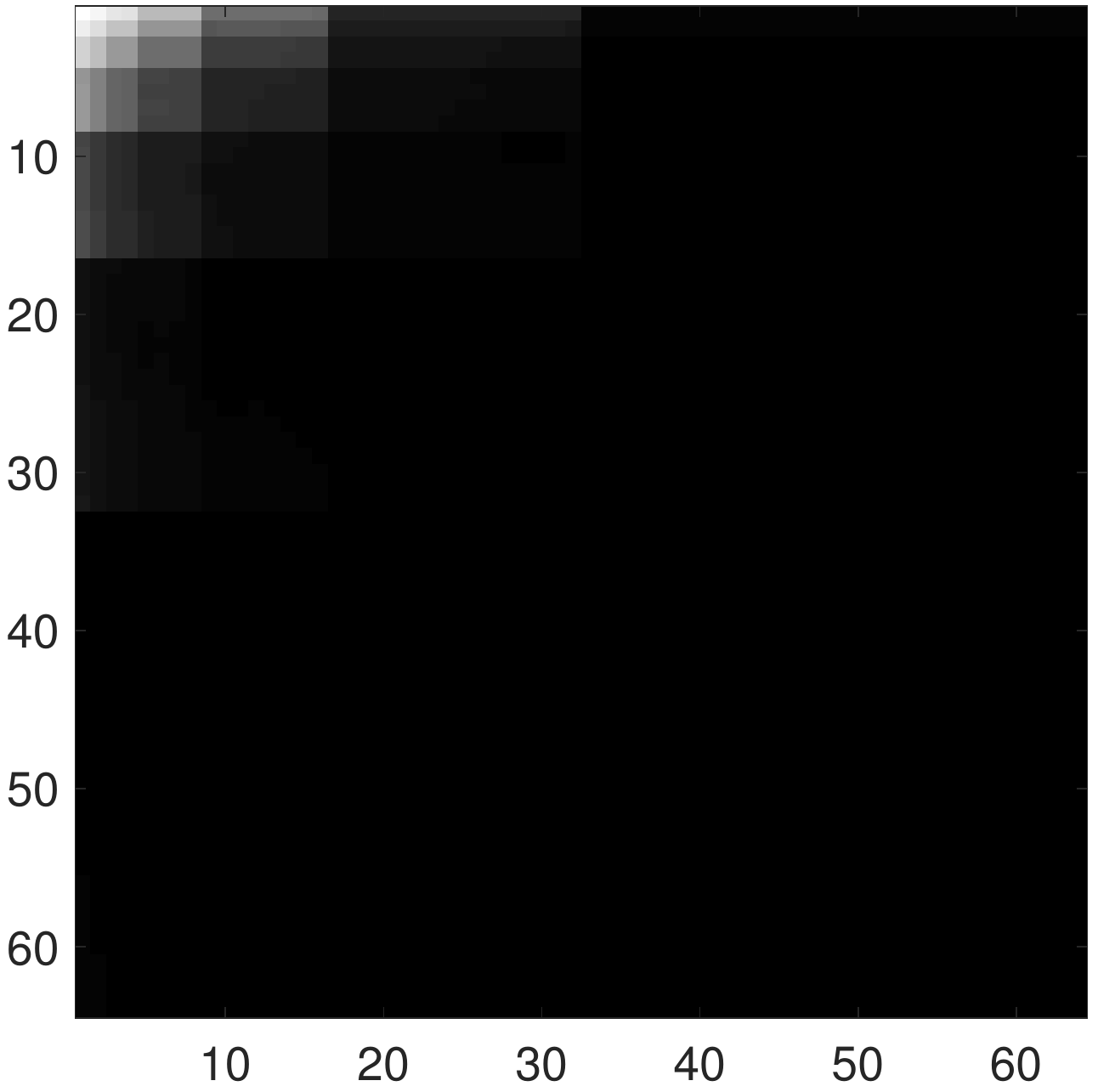}
  \caption{}
\end{subfigure}
\caption{(a) Original image from which the patches are extracted. (b) Relative frequency of wavelet coefficients above threshold (blue) - average frequency (red) on a log scale. (c) Locations of non-zeros coefficients in the 2D Haar-Wavelet basis - the higher the row or column index the smaller the corresponding wavelet}
\label{wav_coeff}
\end{figure}
It comes as no surprise that low frequency (large) wavelets are much more likely to appear in the sparse supports than high frequency (small) wavelets. So the supports of the sparse signals exhibit a non-uniform structure which previous results on the conditioning of random subdictionaries do not cover. We try to close this gap by defining two non-uniform support distributions and deriving tail bounds on the norms of the resulting random submatrices. This allows us to derive recovery guarantees of the sparse supports for a larger class of practically relevant signals.\\
\noindent\textbf{Prior work:} As mentioned above, Tropp~\cite{tr08} and Chr\'etien and Darses~\cite{chda12} derived concentration inequalities for the operator norm of random submatrices with uniformly distributed supports. These results were applied to BP showing that BP recovers the correct support and coefficients under rather mild conditions on the dictionary \cite{tropp08}. For OMP, similar results were developed in \cite{sc18omp}, whereas for thresholding average case results appeared in \cite{scva07}. \\
In \cite{pairs_dictionary_recovery} the dictionary $D$ is assumed to be a concatenation of two dictionaries $\phi$ and $\psi$, i.e. $D = ( \phi, \psi)$. There a concentration inequality on the extreme singular values of submatrices that consist of a \textit{fixed} set of columns with cardinality $n_a$ of the first dictionary and a \textit{random} set of columns $n_b$ of the second dictionary are derived. This allows to model signals where some atoms are known to be in the support while some others are picked uniformly at random.\\
The idea of using the structure of sparse signals to improve recovery of the sparse coefficients can also be found in the field of compressed sensing. The aim in compressed sensing is to recover a sparse signal $y\in \R^d$ from an incomplete set of linear measurements $z = Ay$, where $A\in \R^{m\times d}$ and $m\ll d$, \cite{carota06, do06cs}. The signal $y$ is assumed to be sparse or compressible in some (orthonormal) basis or frame $\dico$, i.e. $y = \dico x$ for a sparse coefficient vector $x$. \\
From a theoretical point of view the best measurement matrices $A$, achieving the smallest $m$ for a given sparsity level $S$, are random matrices. Unfortunately in many practical applications it is not possible or efficient to use random matrices, since they cannot be realised by the underlying physical measurement process, such as in compressed magnet resonance imaging (MRI). Instead one is given an (often orthonormal) measurement matrix $\pdico\in \R^{d\times d}$ and has to find a \textit{subsampling pattern} $\Omega \subseteq \{1,... ,K \}$ which selects $m$ rows of $\pdico$, so that for $A=P_{\Omega} \pdico$ the signal $y$ resp. the coefficients $x$ can be reliably reconstructed from $z = Ay = A \dico x = \bar A x $.\\  
As in sparse approximation, rather strong assumptions on the matrix $A \dico = \bar A$ are needed in order to guarantee recovery for all sparse $x$. In \cite{candes11} the elements of $\Omega$ were assumed to be chosen uniformly at random in order to employ probabilistic arguments to derive sufficient conditions for recovery for relatively small $m$. Over the years, various different subsampling strategies - most of them highly non-uniform - were proposed (see for example \cite{boyer13, weiss13_2, Vander11, adhaporo17, krwa12}). Underlying the success of these variable density sampling strategies is the highly non-uniform structure of the sparse supports. So it was shown that previous lower bounds on the size of $m$ are too pessimistic and performance can be improved if the subsampling pattern takes the support structure of the sparse signals into account \cite{adhaporo17, krwa12}.\\
\noindent\textbf{Contribution:} We derive tail bounds on the operator norm of non-uniformly chosen submatrices. The supports are assumed to follow either a Poisson sampling model or a rejective sampling model thus allowing us to model a large class of non-uniform distributions. Our results rely on a generalisation of a Theorem by Chr\'etien and Darses~\cite{chda12}. The main tool to handle non-uniformly distributed $S$-sparse supports is a kind of Poissonisation argument where we provide a generalised version of Lemma 4.1 of~\cite{hajek1964}. We apply these results to derive sufficient conditions for sparse approximation to work with high probability for thresholding, OMP and BP. In the CS setup this analysis provides a criterion to decide between two possible measurement matrices $A_1$ and $A_2$ depending on the frequency of the basis elements. Further, if there is no design freedom for the dictionary or CS matrix, we show how to incorporate this prior information about the coefficient distribution into the algorithms using the ideas of preconditioning and sensing dictionaries.\\
\noindent\textbf{Organisation:} Section \ref{sec:notations} collects our notations and defines the setting we work in. In Section \ref{sec:main} we state our results on norms of non-uniformly distributed random submatrices and apply those concentration inequalities to sparse approximation in Section \ref{sec:applications}. Finally we incorporate this knowledge in the construction of special sensing dictionaries in Section \ref{sec:sensing} and show how they improve performance.



\noindent
\section{Notation and setting}\label{sec:notations}
A quick note on the notation used throughout this text. Let $A \in \R^{d \times K}$ and $B \in \R^{K\times m}$. By $A_k$ and $A^k$ we denote the $k$-th column and $k$-th row of $A$ respectively and by $A\transp$ the transpose of the matrix $A$. For $1 \leq p,q,r \leq \infty$ we set $\| A \|_{p,q} := \max_{\| x\|_q = 1} \| Ax \|_p $. Recall that $\| A B \|_{p,q} \leq \|A \|_{q,r} \|B\|_{r,p}$ and $\|Ax \|_{q} \leq \|A\|_{q,p} \|x \|_{p}$. Frequently encountered quantities are
\[
\| A \|_{\infty, 2} = \max_{k \in \{1, \dots , d \}}\|A^k \|_2 \quad \text{and} \quad \| A \|_{2, 1} = \max_{k \in \{1, \dots , K \}}\|A_k \|_2,
\]
denoting the maximum $\ell_2$-norm of a row and the maximum $\ell_2$-norm of a column of $A$ respectively. Note that $\| A \|_{\infty,2} = \|A\transp \|_{2,1}$. Further note that $\| A \|_{\infty,1}$ simply is the maximum absolute entry of the matrix $A$. For ease of notation we sometimes write $\| A \| = \| A \|_{2,2}$ for the operator norm - corresponding to the largest absolute singular value of $A$. For a vector $v \in \R^{d}$, we denote by $\|v\|_{\min} := \min_{i} |v_i|$ the smallest absolute value of $v$ and $\|v\|_{\max} := \|v \|_{\infty}$ the maximal absolute value of $v$. For a subset $I\subseteq \mathbb{K}:=\{ 1,\dots ,K \}$, called the support, we denote by $A_I \in \R^{d \times S}$  the submatrix with columns indexed by $I$ and by $A_{I,I} \in \R^{S \times S}$ the submatrix with columns and rows indexed by $I$. We denote by $A_I^{\dagger}$ the Moore-Penrose pseudo inverse of the matrix $A_I$ and by $P(A_J) := A_J A_J^{\dagger}$ the projection onto the column span of $A_J$. As was noted in the introduction we want the supports to follow a non-uniform distribution, allowing some columns, called atoms, to be picked more frequently than others. We are going to use the following two sampling models which define two probability measures on $\mathcal{P}(\mathbb{K})$ that allow us to model non-uniform distributions for our supports.
\begin{definition}[Poisson sampling]
Let $\delta_j$ denote a sequence of $K$ independent Bernoulli $0$-$1$ random variables with expectation $p_j$ such that $\sum_{j = 1}^K p_j = S$. We say the supports $I$ follow the Poisson sampling model, if
\[
I  := \left\{ i \; \middle| \; \delta_i  = 1\right\}.
\]
Each support $I \subseteq \mathbb{K}$ is chosen with probability
\begin{equation}\label{ind_dist}
    \P(I)=\prod_{i \in I}p_i\prod_{j \notin I}(1-p_j).
\end{equation}
\end{definition}
Supports following a Poisson sampling model have (by definition of the Bernoulli r.v.) cardinality $S$ \textit{on average}. This comes with the big advantage that the probability of one atom appearing in the support is independent of the others, allowing us to make use of concentration inequalities for sums of independent random matrices later on. The drawback of this model is that the supports are not exactly $S$ sparse. This can be achieved by keeping only those supports that have cardinality $S$ and \textit{throwing away} the rest. This amounts to simply conditioning the above Poisson sampling model on the event that exactly $S$ of the Bernoulli r.v. are equal to $1$, leading to our second support distribution model.
\begin{definition}[Rejective sampling]
Let $\delta_j$ denote a sequence of $K$ independent Bernoulli $0$-$1$ random variables with expectation $p_j$ such that $\sum_{j =1}^K p_j = S$ and denote by $\P$ the probability measure of the corresponding Poisson sampling model. We say our supports follow the rejective sampling model, if each support $I \subseteq \mathbb{K}$ is chosen with probability
\begin{equation}\label{cond_dist}
    \P_S(I) := \P(I \; | \;  |I| = S) = \begin{cases}
               c \prod_{i \in I}p_i\prod_{j \notin I}(1-p_j) \quad& \mbox{if} \quad |I|=S\\
               0 \quad& \mbox{else}
            \end{cases},
\end{equation}
where $c$ is a constant to ensure that $\P_S$ is a probability measure.
\end{definition}
The distributions of the supports in the above two sampling models are uniquely defined by the expectations of the Bernoulli random variables. For more information on Poisson and rejective sampling, we refer the interested reader to~\cite{hajek1964}. We call the square diagonal matrix $\weights := \diag((\sqrt{p_k})_k)$ the weight matrix. Let $\proj$ be the square diagonal \textit{selector matrix} whose diagonal entries are the $\delta_j$, i.e. $\proj  = \diag((\delta_k )_k)$ and denote by $\proj'$ an independent copy of $\proj$. Further let $\vec{A}_{jk} = A_{jk} e_j \otimes e_k$ be the matrix with only non-zero entry $A_{jk}$. This allows us to write
\begin{equation*}
    \proj A \proj =\sum_{i,j} \delta_i \delta_j \vec{A}_{ij}.
\end{equation*}
Note that by properties of the operator norm, the two random variables $\| A_I \|$ and $\| A \proj \|$ have the same distribution. 
\section{Main results}\label{sec:main}
We now present our main results on submatrices whose support is sampled from a non-uniform distribution. We begin by stating the concentration inequality for the operator norm of non-uniformly picked random submatrices, before turning to some special cases arising in sparse approximation. Then we state a concentration inequality for the maximal row norm of random column-submatrices. Lastly we state and proof a kind of Poissonisation argument - of independent interest - which is key for our proofs. Note that we state our results only for the rejective sampling model, but they hold for the Poisson sampling model as well - see Remark~\ref{rem:1}.

\subsection{Operator norm of random submatrices}
The aim is to get a tail bound for the random variable $\| H_{I,I}\|_{2,2}$, where $I$ is distributed according to the models introduced above and $H$ is a matrix with zero diagonal. As expected, the result shows how the more frequently picked entries have a higher impact on the operator norm than less important ones.
\begin{theorem} \label{them:1}
Let $H \in \R^{K \times K}$ be a matrix with zero diagonal and assume $I \subseteq\mathbb{K}$ is chosen according to the rejective sampling model with probabilities $p_1, \dots , p_K$ such that $\sum_{i = 1}^K p_i = S$. Further let $W$ denote the corresponding weight matrix. Then, for all $r \geq 2 e^2 \| \weights H \weights \|_{2,2}$
\begin{equation*}
    \P_S \bigg( \left\| H_{I,I} \right\|_{2,2} \geq r \bigg) \leq 216 K \exp{\left(-\min\left\{\frac{r^2}{4e^2\|  H \weights \|_{\infty,2}^2 },\frac{r^2}{4e^2\|\weights H \|_{2,1}^2},\frac{r}{2\| H \|_{\infty,1}} \right\}\right)}.
\end{equation*}
\end{theorem}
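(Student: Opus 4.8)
The plan is to establish the estimate first under the Poisson sampling model, where the selectors $\delta_j$ are independent, and then transfer it to the rejective model. For the transfer I would use that the operator norm of a submatrix never exceeds that of the ambient matrix, so $\|H_{I,I}\|_{2,2}\le\|H_{J,J}\|_{2,2}$ whenever $I\subseteq J$ and the event $\{\|H_{I,I}\|_{2,2}\ge r\}$ is increasing with respect to set inclusion. The Poissonisation lemma stated below (our generalisation of Lemma~4.1 of~\cite{hajek1964}) then bounds its probability under $\P_S$ by an absolute constant times its probability under the Poisson measure $\P$, the constant being absorbed into the prefactor $216K$. It thus suffices to bound $\P(\|H_{I,I}\|_{2,2}\ge r)$ with genuinely independent $\delta_j$.

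In the Poisson setting $\|H_{I,I}\|_{2,2}$ has the same distribution as $\|\proj H\proj\|_{2,2}$, and since $H$ has zero diagonal, $\proj H\proj=\sum_{i\ne j}\delta_i\delta_j\Hij$ with mean $\E[\proj H\proj]=\weights^2 H\weights^2$, whose operator norm is at most $\|\weights H\weights\|_{2,2}$ since $\|\weights\|_{2,2}=\max_k\sqrt{p_k}\le 1$. I would split off this mean — the hypothesis $r\ge 2e^2\|\weights H\weights\|_{2,2}$ then lets it be absorbed into $r/2$ — and treat the remaining centred Bernoulli chaos by a tail-decoupling inequality, replacing one copy of $\proj$ by an independent copy $\proj'$ at the cost of a universal constant. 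Conditioning on $\proj'$, the matrix $\proj H\proj'=\sum_i\delta_i\,e_ie_i\transp H\proj'$ becomes a sum of $K$ independent random matrices, each with a single nonzero row, to which a matrix Bernstein/Chernoff inequality applies.

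The two conditional matrix variances are $\sum_i p_i\|(H\proj')^i\|_2^2\,e_ie_i\transp$ and $\proj'H\transp\weights^2H\proj'$; taking the randomness of $\proj'$ into account turns their operator norms into $\|H\weights\|_{\infty,2}^2$ and $\|\weights H\|_{2,1}^2$ (plus a further $\|\weights H\weights\|_{2,2}^2$ term that the hypothesis again absorbs), which are exactly the first two entries of the $\min$. The third, linear-in-$r$, term with $\|H\|_{\infty,1}$ arises in the Bernstein (large-$r$) regime, where the individual entries $H_{ij}$ are the controlling scale; making the previous step rigorous — the conditional variances are themselves random in $\proj'$ — requires a further layer of the same decoupling/concentration argument, and it is the geometric accumulation of the decoupling constant, the dimensional factor $2K$ from matrix Bernstein and the Poissonisation constant over these layers that produces the explicit $216$ and $4e^2$.

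I expect the Poissonisation step to be the main obstacle. Hájek's Lemma~4.1 is tailored to specific symmetric functions of the selectors in the uniform-weight regime, whereas here one needs a version valid for an arbitrary increasing functional of the support and for non-uniform inclusion probabilities $p_1,\dots,p_K$, with a comparison constant that does not deteriorate with $S$ or $K$; this is the ingredient that genuinely handles the non-uniformity of the support distribution. Once it is in place, the remainder is a careful but essentially standard adaptation of the tail-decoupling-plus-matrix-Chernoff scheme of~\cite{chda12}, carrying the weight matrix $\weights$ through the variance computations.
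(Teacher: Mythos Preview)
Your plan is the paper's: Poissonisation via a monotonicity lemma for increasing support-events (your correctly identified main obstacle), then tail-decoupling $\proj H\proj\to\proj H\proj'$ (proved directly in the paper via a Rademacher/Hahn--Banach argument with constant $36$, rather than cited as a black box), then three nested applications of the matrix Chernoff inequality for sums of PSD matrices, with free parameters $u,v$ balanced at the end to produce the three-way $\min$ and the prefactor $216=2\cdot 36\cdot 3$. The one discrepancy is your mean-subtraction step: the paper never centres $\proj H\proj$, since decoupling applies directly to the zero-diagonal bilinear form, and the hypothesis $r\ge 2e^2\|\weights H\weights\|_{2,2}$ is not used to absorb a mean but enters instead at the second Chernoff layer (bounding $\|\proj H\weights\|_{2,2}$), where the choice $u^2=r^2/(4e^2)$ needs $e\,\|\weights H\weights\|_{2,2}^2/u^2\le e^{-1}$ for the bound to be $\le e^{-\alpha}$ --- so your centring is an unnecessary detour, and in fact standard tail-decoupling is stated for the uncentred chaos anyway.
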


\begin{proof}[Outline]
We follow the proof that appeared in Chr\'etien and Darses~\cite{chda12} with some minor changes to account for the non-uniformly distributed supports and the extension to non-symmetric matrices. Their proof consists of roughly three steps. First they bound the failure probability of the rejective sampling model by the independent Poisson sampling model 
\[
\P_S \left ( \| \proj H \proj \|_{2,2} \geq r  \right) \leq 2 \P\left ( \| \proj H\proj \|_{2,2} \geq r  \right).
\]
Then they use a decoupling argument to make the selection of rows and columns independent, i.e.
\[
\P\left ( \| \proj H\proj \|_{2,2} \geq r  \right) \leq 72 \P \left ( \| \proj H\proj' \|_{2,2} \geq r/2  \right),
\]
where $\proj'$ is an independent copy of $\proj$.
Then they apply the matrix Chernoff inequality three times to finish the proof. Our proof in the non-uniform, non-symmetric case follows the above outline very closely. The main difficulty lies in bounding the rejective model by the Poisson model, which is why we had to provide Lemma~\ref{lemma:f}. The second and third steps are straightforward extensions of their argument. For the sake of completeness we provide a detailed proof in the appendix. 
\end{proof}
\subsubsection{Special cases - hollow (cross)-Gram matrices}
In this subsection we look at the special case $H = \dico \transp \dico - \mathbb{I}$ that appears naturally in the sparse approximation framework. Previous results showed that success of recovery depends on the coherence $\mu := \max_{i \neq j}|\ip{\phi_i}{\phi_j}|$ and the conditioning of the subdictionary $\dico_I$, i.e.
\begin{equation*}
    \vartheta_I := \| \dico_I\transp \dico_I - \mathbb{I} \|_{2,2} = \max\left\{ \lambda^2_{\max}(\dico_I) -1  , 1 - \lambda^2_{\min}(\dico_I)\right\}.
\end{equation*}
Here $\lambda_{\max}^2$ and $\lambda_{\min}^2$ denote the biggest and smallest eigenvalue of $\dico_I \transp \dico_I$ respectively. In this setting, the matrix $H := \dico \transp \dico - \mathbb{I}$ is called the hollow Gram matrix and we call $\mu: = \max_{i \neq j} |\ip{\atom_i}{\atom_j}| = \| H \|_{\infty, 1}$ the coherence. Applying Theorem \ref{them:1} to this matrix, we get the following bound on $\vartheta_I$.
\begin{corollary}\label{cor:1}
Let $\dico \in \R^{d \times K}$ be a dictionary with unit norm columns and assume $I \subseteq\mathbb{K}$ is chosen according to the rejective sampling model with probabilities $p_1, \dots , p_K$ such that $\sum_{i = 1}^K p_i = S$. Further let $W$ denote the corresponding weight matrix. Then, for all $r \geq 2 e^2 \| \weights H \weights \|_{2,2}$
\begin{equation*}
    \P_S \bigg( \left\| \dico_I\transp \dico_I - \mathbb{I}\right\|_{2,2} \geq r \bigg) \leq 216 K \exp{\left(-\min\left\{\frac{r^2}{4e^2\|  H \weights \|_{\infty,2}^2 }, \frac{r}{2\mu} \right\}\right)}.
\end{equation*}
\end{corollary}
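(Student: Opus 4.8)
The plan is to read off the corollary as the specialisation of Theorem~\ref{them:1} to the hollow Gram matrix $H := \dico\transp\dico - \mathbb{I}$, so that the work reduces to (i) checking that the hypotheses of Theorem~\ref{them:1} are met and (ii) simplifying the minimum in its conclusion. For (i) I would first note that $H$ has zero diagonal: since $\dico$ has unit-norm columns, its $k$-th diagonal entry is $\|\atom_k\|_2^2 - 1 = 0$, so Theorem~\ref{them:1} applies. I would also record that the third quantity appearing there, $\|H\|_{\infty,1}$, is by the remark in Section~\ref{sec:notations} the largest absolute entry of $H$, i.e. $\max_{i\neq j}|\ip{\atom_i}{\atom_j}| = \mu$.

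Applying Theorem~\ref{them:1} to this $H$, and using $\|\dico_I\transp\dico_I - \mathbb{I}\|_{2,2} = \|H_{I,I}\|_{2,2}$, gives for every $r \geq 2e^2\|\weights H\weights\|_{2,2}$ a bound of the form $216K\exp(-\min\{a,b,c\})$ with $a = r^2/(4e^2\|H\weights\|_{\infty,2}^2)$, $b = r^2/(4e^2\|\weights H\|_{2,1}^2)$ and $c = r/(2\mu)$. The only remaining step for (ii) is to show $a=b$, i.e. that $\|\weights H\|_{2,1} = \|H\weights\|_{\infty,2}$. This follows from symmetry: $H = \dico\transp\dico - \mathbb{I}$ is symmetric and $\weights$ is diagonal, hence $(\weights H)\transp = H\transp\weights\transp = H\weights$; combining this with the identity $\|A\|_{2,1} = \|A\transp\|_{\infty,2}$ recalled in Section~\ref{sec:notations} yields $\|\weights H\|_{2,1} = \|(\weights H)\transp\|_{\infty,2} = \|H\weights\|_{\infty,2}$. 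Substituting collapses the three-term minimum to the two-term minimum stated in the corollary.

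I do not expect any genuine obstacle: the corollary is essentially a restatement of Theorem~\ref{them:1}, and the single non-bookkeeping point is the collapse $a=b$, which is purely the symmetry of the Gram matrix. It is worth flagging that this is the only place symmetry is used, so the analogous statement for a non-symmetric cross-Gram matrix would retain all three terms in the minimum.
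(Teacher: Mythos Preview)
Your proposal is correct and matches the paper's approach exactly: the corollary is obtained by applying Theorem~\ref{them:1} to $H=\dico\transp\dico-\mathbb{I}$ and then using the symmetry of $H$ to collapse the $\|\weights H\|_{2,1}$ term into the $\|H\weights\|_{\infty,2}$ term, which is precisely the one-line justification the paper gives (``In this setting $H$ is symmetric, hence $H\transp\weights=H\weights$''). Your write-up is in fact more explicit than the paper's, spelling out why $H$ has zero diagonal and why $\|H\|_{\infty,1}=\mu$.
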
 \label{cor:4}
In this setting $H$ is symmetric, hence $H\transp \weights = H \weights$. The result can be used to bound
\begin{equation*}
    \P_S\left(\| \dico_I \|_{2,2} \gtrless \sqrt{1 \pm r}\right) \quad \text{and} \quad \P_S\left(\| (\dico_I \transp \dico_I)^{-1} \|_{2,2} \geq \frac{1}{1-r}\right).
\end{equation*}
This comes in handy when trying to prove recovery guarantees of sparse approximation algorithms later in this text.\\
Another frequently arising quantity is the cross-Gram matrix $H := \pdico \transp \dico - \diag(\pdico\transp \dico)$, where $\dico$ and $\pdico$ are dictionaries. In this setting, we call $\hat{\mu}: = \max_{i \neq j} |\ip{\atom_i}{\patom_j}|$ the cross-coherence. Applying Theorem \ref{them:1} yields
\begin{corollary}\label{cor:2}
Let $\pdico, \dico \in \R^{d \times K}$ be dictionaries and assume $I \subseteq\mathbb{K}$ is chosen according to the rejective sampling model with probabilities $p_1, \dots , p_K$ such that $\sum_{i = 1}^K p_i = S$. Further let $W$ denote the corresponding weight matrix. Then, for all $r \geq 2 e^2 \| \weights H \weights \|_{2,2}$
\begin{equation*}
    \P_S \bigg( \! \left\| \pdico_I\transp \dico_I - \diag(\pdico_I\transp \dico_I)\right\| \geq r \! \bigg) \leq 216 K \exp{\!\left(-\min\!\left\{\frac{r^2}{4e^2\|  H \weights \|_{\infty,2}^2 },\frac{r^2}{4e^2\|\weights H \|_{2,1}^2 }, \frac{r}{ 2\hat{\mu}} \right\}\right)}.
\end{equation*}
\end{corollary}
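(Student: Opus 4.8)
The plan is to obtain Corollary~\ref{cor:2} as a direct application of Theorem~\ref{them:1}, so the real content is bookkeeping: verifying the hypotheses and reading off the three norms that appear in the exponent. First I would observe that the cross-Gram matrix $H := \pdico\transp\dico - \diag(\pdico\transp\dico)$ has, by construction, zero diagonal, with off-diagonal entries $H_{jk} = \ip{\patom_j}{\atom_k}$ for $j \neq k$. Hence Theorem~\ref{them:1} applies to this $H \in \R^{K \times K}$ with the same rejective sampling model, probabilities $p_1,\dots,p_K$ and weight matrix $W$, and the threshold condition $r \geq 2e^2\|\weights H\weights\|_{2,2}$ is inherited verbatim.

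Next I would connect the two left-hand sides. Since passing to the submatrix indexed by $I$ in both rows and columns commutes with extracting the diagonal, we have $(\pdico\transp\dico)_{I,I} = \pdico_I\transp\dico_I$ and $\bigl(\diag(\pdico\transp\dico)\bigr)_{I,I} = \diag(\pdico_I\transp\dico_I)$, so that $H_{I,I} = \pdico_I\transp\dico_I - \diag(\pdico_I\transp\dico_I)$ and the event $\{\|H_{I,I}\|_{2,2} \geq r\}$ is exactly the event appearing in the statement. It then remains to identify the three quantities in the bound of Theorem~\ref{them:1}. Two of them, $\|H\weights\|_{\infty,2}$ and $\|\weights H\|_{2,1}$, are kept as they are; note that, in contrast to Corollary~\ref{cor:1}, the matrix $H$ is no longer symmetric here, so these two terms need not coincide and both must be retained. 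For the third, $\|H\|_{\infty,1}$ is by definition the largest absolute entry of $H$, which equals $\max_{j\neq k}|\ip{\patom_j}{\atom_k}| = \hat\mu$, the cross-coherence. Substituting $\|H\|_{\infty,1} = \hat\mu$ into the tail bound of Theorem~\ref{them:1} yields the claimed inequality.

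I do not expect any genuine obstacle: the statement is a straightforward specialisation of Theorem~\ref{them:1}, and the only points that require a line of justification are that hollowing commutes with restriction to $I$ and the elementary identification $\|H\|_{\infty,1} = \hat\mu$. The corresponding statement for the Poisson sampling model follows in the same way via Remark~\ref{rem:1}.
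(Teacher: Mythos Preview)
Your proposal is correct and matches the paper's approach exactly: the paper simply states ``Applying Theorem~\ref{them:1} yields'' before Corollary~\ref{cor:2}, and your write-up supplies precisely the bookkeeping (zero diagonal of $H$, $H_{I,I}=\pdico_I\transp\dico_I-\diag(\pdico_I\transp\dico_I)$, and $\|H\|_{\infty,1}=\hat\mu$) that this sentence implicitly relies on.
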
 
Note that in contrast to Corollary~\ref{cor:4} the matrix $H$ is not symmetric any more, hence we need to control both $\|H\weights \|_{\infty,2}$ and $\|\weights H \|_{2,1}$. 
In contrast to previous works the above results are in terms of the maximal row norm of the weighted Gram matrix. By using the bounds 
\begin{align*} 
&\|  H \weights \|_{\infty,2} \leq \|  \pdico \transp \dico \weights  \|_{\infty,2} \leq \|\pdico \transp \|_{\infty,2} \| \dico  \weights \|_{2,2} = \| \dico  \weights \|_{2,2},\\
&\|\weights H \|_{2,1} = \|   H \transp  \weights \|_{\infty,2} \leq \| \dico \transp \|_{\infty,2}  \| \pdico  \weights \|_{2,2} = \| \pdico  \weights \|_{2,2},\\
&\| \weights H \weights \|_{2,2}  \leq \|  \pdico \weights\|_{2,2} \| \dico \weights \|_{2,2}
\end{align*}
one would get bounds similar in spirit to the results of Chr\'etien and Darses~\cite{chda12} and Tropp~\cite{tr08}.\\ 
We stick to the quantities $\|  H \weights \|^2_{\infty,2}$ and $\|  \weights H  \|^2_{2,1}$ to see how the weights of the distribution interact with the structure of $H$. Intuitively the above results state that the more frequently an atom is picked, the less coherent it should be to all the other atoms in order for a random submatrix to be well-conditioned. \\
The generality of this result allows for $p_i \in [0,1]$, which thus includes models where some atoms are already known to be in the support and some to not appear at all. This allows for models where a dictionary $D$ is a concatenation of two dictionaries $\phi$ and $\psi$, i.e. $D = (\phi, \psi)$ and the submatrix of interest consists of a \textit{fixed} set of columns with cardinality $n_a$ of the first dictionary and a \textit{random} set of columns $n_b$ of the second dictionary. Such a scenario can easily be modeled by setting the $p_i$ and the weight matrix $\weights$ accordingly and would yield similar results to \cite{pairs_dictionary_recovery}.
\subsection{Maximum row norm of a random restriction}
Another frequently encountered random variable in sparse approximation is the maximal row norm $\| H_I\|_{\infty,2}$. Given a weight matrix $\weights$, the following Lemma states that one can expect this quantity to be approximately of size $\| H \weights \|_{\infty,2}$. This can be significantly smaller than the worst case $\max_{i,j}|H_{i,j}| \sqrt{S} $ for $|I| \leq S$, depending on the structure of $H$ and $\weights$. Plugging in $H = \pdico \transp \dico - \diag(\pdico \transp \dico)$ we again see that the more frequently picked atoms should have smaller coherences in order for $\| H \weights \|_{\infty,2}$ to be small. This result is an integral part of the proof of Theorem~\ref{them:1} and hence we defer its proof to the appendix.
\begin{lemma}\label{lem:1}
Let $H \in \R^{d \times K}$ be some matrix. Assume $I \subseteq \mathbb{K}$ is chosen according to the rejective sampling model with probabilities $p_1, \dots , p_K$ such that $\sum_{i = 1}^K p_i = S$. Further let $W$ denote the corresponding weight matrix. Then, for all $v >0$
\begin{equation*}
    \P_S \left( \| H _I \|_{\infty,2} \geq v \right) \leq 2K \left( e \frac{\| H \weights \|_{\infty,2}^2}{v^2}  \right) ^{\frac{v^2}{\mu^2}}.
\end{equation*}
\end{lemma}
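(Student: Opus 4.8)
The plan is to bound the maximum row norm by combining a union bound over the $d$ rows with a moment bound on a single row norm, where the moments are computed under the rejective sampling model. First I would fix a row index $k$ and write $\|H_I^k\|_2^2 = \sum_{i \in I} H_{ki}^2 = \sum_i \delta_i H_{ki}^2$, where the $\delta_i$ are the selector variables. The key observation is that under the \emph{Poisson} sampling model the $\delta_i$ are independent Bernoulli variables with mean $p_i$, so $\E[\delta_i H_{ki}^2] = p_i H_{ki}^2$ and hence $\E_{\P}\big[\|H_I^k\|_2^2\big] = \sum_i p_i H_{ki}^2 = \|(H\weights)^k\|_2^2 \leq \|H\weights\|_{\infty,2}^2$. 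To pass from the Poisson model to the rejective model I would invoke the Poissonisation argument (Lemma~\ref{lemma:f}, the generalised Hajek lemma), which controls the rejective-model probabilities by the Poisson-model ones; this is exactly the tool the paper highlights as central, so I would expect the transfer step — making sure the relevant functional of $\delta$ is of the right monotone/convex type for that lemma to apply — to be the main obstacle.

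Next I would get an exponential-moment or high-moment control on $\|H_I^k\|_2^2$. Since $\|H_I^k\|_2^2$ is a sum of independent nonnegative terms bounded by $\max_i H_{ki}^2 \leq \mu^2$ (using $H_{ki}^2 \leq \|H\|_{\infty,1}^2 = \mu^2$, consistent with the appearance of $\mu$ in the exponent), I would apply a Chernoff/Bennett-type bound for sums of bounded independent random variables. Concretely, for $t>0$,
\[
\P_{\P}\big(\|H_I^k\|_2^2 \geq v^2\big) \leq e^{-tv^2}\, \E_{\P}\Big[\exp\big(t\textstyle\sum_i \delta_i H_{ki}^2\big)\Big] \leq e^{-tv^2}\prod_i\Big(1 + p_i(e^{t\mu^2}-1)\Big) \leq \exp\Big(-tv^2 + (e^{t\mu^2}-1)\tfrac{\|H\weights\|_{\infty,2}^2}{\mu^2}\Big),
\]
using $1+x \leq e^x$ and $p_i H_{ki}^2 / \mu^2 \cdot \mu^2 = p_i H_{ki}^2$ summing to $\|(H\weights)^k\|_2^2$. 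Optimising over $t$ — the standard choice makes $e^{t\mu^2} = v^2/\|H\weights\|_{\infty,2}^2$, i.e. $t\mu^2 = \log(v^2/\|H\weights\|_{\infty,2}^2)$ — yields a bound of the shape $\big(e\,\|H\weights\|_{\infty,2}^2 / v^2\big)^{v^2/\mu^2}$ for the single-row Poisson probability.

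Finally I would take a union bound over the $d$ rows and apply the rejective-to-Poisson comparison, picking up the factor $2K$: the factor $2$ from the Poissonisation comparison constant and the union-bound cardinality recorded as $K$ (consistent with the statement, and in any case $d \leq$ the ambient count used there). Care is needed on two points: the comparison lemma must be applicable to the event $\{\|H_I^k\|_2 \geq v\}$ or, more robustly, applied to the exponential-moment functional before the Markov step, since monotone convex functionals of $(\delta_i)$ are precisely what the Hajek-type lemma handles; and one must verify the bound is only non-trivial when $v > \|H\weights\|_{\infty,2}$, which is automatic since otherwise the right-hand side exceeds $1$. I expect the analytic core (the Chernoff computation and the $t$-optimisation) to be routine; the delicate part, as flagged above, is the clean justification of the Poisson$\to$rejective transfer for the quantity at hand.
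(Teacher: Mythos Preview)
Your proposal is correct and matches the paper's own proof essentially line for line: write each squared row norm as $\sum_j \delta_j H_{kj}^2$, apply the scalar Chernoff bound under the Poisson model, union-bound over rows, and invoke Lemma~\ref{lemma:f} to pass to the rejective model. Your flagged ``main obstacle'' is in fact straightforward --- the indicator of $\{\|H_I\|_{\infty,2}\geq v\}$ is monotone in $I$ (adding columns only increases row norms), which is exactly the hypothesis of Lemma~\ref{lemma:f}, so no detour through exponential-moment functionals is needed.
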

\subsection{Poissonisation argument in the non-uniform case}
As already mentioned, we have to bound the failure probability under the rejective sampling model by the failure probability under the Poisson sampling model in order to apply concentration inequalities for sums of independent random variables. In the uniform case the following lemma is not needed, as one can argue that the supports can also be sampled by drawing one atom after the other to get a uniform support distribution - see Claim (3.29) p. 2173 in \cite{candes2009}. For the non-uniform case it is not that easy. Lemma 4.1 of~\cite{hajek1964} almost provides the result that we need, but has too restrictive assumptions on the expectations $p_i$. Therefore we prove\ifthenelse{\boolean{arxiv}}{\footnote{The result might be known but extremely well hidden, thus forcing us to prove it.}}{} the following result which does not have any constraints on the expectations $p_i$. 
\begin{lemma}[Poissonisation]\label{lemma:f}
Denote by $\P$ the probability measure corresponding to the Poisson sampling model \eqref{ind_dist} and by $\P_S$ the probability measure corresponding to the rejective sampling model \eqref{cond_dist} - both with the same weight matrix $\weights$. Let $f: \mathcal{P}(\mathbb{K}) \mapsto \{0,1 \}$ be such that for all $I,J \in \mathcal{P}(\mathbb{K})$
\begin{equation*}
  f(I) \leq f(J) \quad \text{if} \quad I \subseteq J.  
\end{equation*}
Then for all $I \subseteq \mathbb{K}$
\begin{align*}
    \P_S \left( f(I) = 1 \right) \leq 2 \; \P \left( f(I) = 1 \right).
\end{align*}
\end{lemma}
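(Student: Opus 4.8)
The plan is to use that $f$ is a monotone (increasing) $0$--$1$ function, so that $A:=\{I\subseteq\Kset : f(I)=1\}$ is an \emph{up-set} in the Boolean lattice $\mathcal{P}(\Kset)$, and to relate the rejective model to the Poisson model by conditioning the latter on the value of $|I|$. Under the Poisson model write $Z_k:=\P(|I|=k)$ and $q_k:=\P(f(I)=1\mid |I|=k)$; then $\P_S(f(I)=1)=q_S$ while $\P(f(I)=1)=\sum_{k=0}^{K}Z_k q_k$, so the claim is equivalent to $q_S\le 2\sum_{k}Z_k q_k$.

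The first step is to show that $k\mapsto q_k$ is non-decreasing; equivalently, that the conditional law of $(\delta_1,\dots,\delta_K)$ given $|I|=k$ is stochastically increasing in $k$ for the coordinatewise order (and then $q_k$ is non-decreasing because $f$ is monotone). I would prove this in the equivalent form $\P(A,|I|=k)\,\P(|I|=k+1)\le\P(A,|I|=k+1)\,\P(|I|=k)$: expanding both sides as double sums over pairs of supports and grouping the summands according to the lattice interval $[I\cap J,\,I\cup J]$, one checks that inside each such ``box'' the product of the two Bernoulli weights from \eqref{ind_dist} is a constant, so that after cancelling it the inequality reduces box by box to the combinatorial fact that an up-set of a Boolean lattice has at least as many members on a level as on the symmetric level below it --- which follows from the symmetric chain decomposition of the Boolean lattice, on each chain of which the up-set is an upper segment. (One could also simply quote that a product measure conditioned on its coordinate sum is stochastically monotone in that sum.)

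The second step is the bound $\P(|I|\ge S)\ge\tfrac12$. Granting it, the monotonicity of the $q_k$ gives
\[
\P(f(I)=1)=\sum_{k=0}^{K}Z_k q_k\ \ge\ \sum_{k\ge S}Z_k q_k\ \ge\ q_S\sum_{k\ge S}Z_k\ =\ q_S\,\P(|I|\ge S)\ \ge\ \tfrac12\,q_S,
\]
which is exactly $\P_S(f(I)=1)=q_S\le 2\,\P(f(I)=1)$. For the bound itself, note that $|I|=\sum_i\delta_i$ is a sum of independent Bernoulli variables whose mean $\sum_i p_i=S$ is an \emph{integer}, so it suffices to show that an integer mean is always a median of a Poisson--binomial distribution. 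I would first reduce to the case $p_i\in(0,1)$ (coordinates with $p_i\in\{0,1\}$ are removed or absorbed, lowering $K$), and then obtain the median property either by an elementary --- but not entirely trivial --- argument or by appealing to the Poisson--binomial median theorem of Jogdeo and Samuels; this is exactly the point at which the restrictive hypotheses on the $p_i$ in Lemma~4.1 of~\cite{hajek1964} are dispensed with.

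I expect this last point to be the main obstacle. The passage to the $q_k$ and their monotonicity are ``soft'', whereas establishing $\P(|I|\ge S)\ge\tfrac12$ --- equivalently, that the mass of the rejective event $\{|I|=S\}$ lies in the upper half of the Poisson law --- with \emph{no} constraint whatsoever on the $p_i$ is the quantitative heart of the lemma, and precisely what forces a new argument in place of \cite{hajek1964}.
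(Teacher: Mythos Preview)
Your proposal is correct and follows essentially the same route as the paper: both prove monotonicity of $q_k=\P(f(I)=1\mid |I|=k)$ by grouping pairs $(I,J)$ according to $I\cap J$ and $I\triangle J$ (so that the product weight $\P(I)\P(J)$ is constant on each group) and reducing to a combinatorial inequality on the Boolean lattice, and both then invoke the Jogdeo--Samuels median theorem for the bound $\P(|I|\ge S)\ge\tfrac12$. The only cosmetic difference is that the paper first subtracts the common term to obtain the equivalent form $\sum f(I)[1-f(J)]\le\sum f(J)[1-f(I)]$ and verifies it by an explicit averaging count, whereas you phrase the reduced inequality directly as an up-set level-count and appeal to the symmetric chain decomposition; both arguments are standard and yield the same conclusion.
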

\begin{proof}
Note that the conditions on $f$ imply that if $f(J) =0$ for some $J$, then $f(I) = 0$ for all $I \subset J$. We start by showing that for $0 \leq T \leq K-1$ we have
\begin{equation*}
    \P\big(f(I) = 1 \; \big| \; |I| = T \big) \leq \P\big(f(I) = 1 \; \big| \; |I| = T +1 \big).
\end{equation*}
Expanding the conditional probability we get
\begin{equation*}
     \frac{\sum_{I: |I|=T} f(I)\P(I)}{\sum_{I: |I|=T}\P(I)} \leq \frac{\sum_{J: |J|=T+1} f(J) \P(J)}{\sum_{J: |J|=T+1}\P(J)},
\end{equation*}
which is equivalent to 
\begin{align}\label{lemma:f:1}
   \sum_{I: |I|=T} f(I)\P(I) \sum_{J: |J|=T+1}\P(J) \leq \sum_{J: |J|=T+1} f(J)\P(J) \sum_{I: |I|=T}\P(I).
\end{align}
By combining the sums on both sides and subtracting 
\begin{equation*}
    \sum_{J: |J|=T+1 }\sum_{I: |I| = T}\P(J)\P(I)f(I)f(J)
\end{equation*}
on both sides we see that \eqref{lemma:f:1} is equivalent to
\begin{equation} \label{lemma:f:2}
    \sum_{J: |J|=T+1 }\sum_{I: |I| = T}\P(J)\P(I)f(I)[1-f(J)] \leq \sum_{J: |J|=T+1} \sum_{I: |I| = T }\P(J)\P(I) f(J)[1-f(I)]
\end{equation}
Now the crucial step is to see that we can partition these sums in a very special way. For a pair $(I,J)$, by definition of the Poisson sampling model, we can write $\P(I)\P(J)$ in the following way
\begin{equation*}
    \P(I)\P(J) = \prod_{i \in I}p_i\prod_{j \notin I}(1-p_j)\prod_{i \in J}p_i\prod_{j \notin J}(1-p_j) = \prod_{i \in I \cap J}p_i^2\prod_{i \in I \triangle J}p_i(1-p_i)\prod_{j \notin I \cup J}(1-p_j)^2.
\end{equation*}
This implies that for two pairs $(I,J)$, $(I',J')$ with 
\[
I \cap J = I' \cap J' \quad \text{and} \quad  I \triangle J = I' \triangle J' \quad \text{we have} \quad  \P(I)\P(J) = \P(I')\P(J').
\]
This allows us to define natural partitions on the set of pairs $(I,J)$ such that the probability $\P(I)\P(J)$ is constant on each partition:
Let $k \in \mathbb{T}$, $A\subseteq \mathbb{K}$ with $|A| = k$ and $B \subseteq \mathbb{K}\setminus A $ with $|B| = 2(T-k)+1$. $A$ will be the intersection and $B$ will model the symmetric difference of the sets $I$ and $J$ respectively. For such a combination of $A,B$ we define
\begin{equation*}
    \mathcal{Q}_{A,B} := \left\{(I,J) : I,J \subseteq \mathbb{K} , |I|=T,|J|= T+1,I \cap J =A, I\triangle J = B\right\}.
\end{equation*}
Note that each pair $(I,J)$ with $|I|=T$, $|J|= T+1$ can be \textit{uniquely} assigned to one $\mathcal{Q}_{A,B}$. So if
\begin{equation}\label{lemma:f:3}
    \sum_{(I,J) \in \mathcal{Q}_{A,B}} f(I)[1-f(J)] \leq \sum_{(I,J) \in \mathcal{Q}_{A,B}} f(J)[1-f(I)]
\end{equation}
for all possible choices of $A,B$ then \eqref{lemma:f:2} follows and we are done.\\
We start with the special case $|A| = 0$ and fix $B \subseteq \mathbb{K}$ with $|B| = 2T+1$. With slight abuse of notation we write $I^c := B\setminus I $ for the complement in $B$. With this notation \eqref{lemma:f:3} becomes
\begin{align*}
     \sum_{\substack{I \subseteq B \\ |I| = T}}f(I)(1-f(I^c))  \leq  \sum_{\substack{J \subseteq B \\ |J| = T+1}} f(J)(1-f(J^c)).
\end{align*}
Remembering that $f(I) \leq f(I \cup \{i\})$ and $f(J) \geq f(J\setminus \{i\})$ we get
\begin{align*}
    \sum_{\substack{I \subseteq B \\ |I| = T}}f(I)(1-f(I^c)) &= \sum_{\substack{I \subseteq B \\ |I| = T}}f(I)(1-f(I^c)) \frac{1}{T+1}\sum_{i \in I^c}1 \\
    & =  \frac{1}{T+1}  \sum_{\substack{I \subseteq B \\ |I| = T}}f(I)(1-f(I^c)) \sum_{i \in I^c}f(I\cup \{i\})(1-f(I^c \setminus  \{i\})) \\
     & \leq   \frac{1}{T+1}  \sum_{\substack{I \subseteq B \\ |I| = T}} \sum_{i \in I^c}f(I\cup \{i\})(1-f(I^c \setminus  \{i\})) \\
     & =  \frac{1}{T+1}  (T+1) \sum_{\substack{J \subseteq B \\ |J| = T+1}} f(J)(1-f(J^c)).
\end{align*}
If $|A| > 0$ then the same argument as above replacing $f(\cdot )$ with $f(A \cup \cdot )$ and $T$ with $T-|A|$ yields \eqref{lemma:f:3} for all possible choices of $A$ and $B$. Thus we get
\[
\P\big(f(I) = 1 \; \big| \; |I| = T \big) \leq \P\big(f(I) = 1 \; \big| \; |I| = T +1 \big).
\]
Now we are finally in a position to prove our result. Note that
\begin{align*}
    \P \left( f(I) = 1 \right) &= \sum_{k=1}^K \P \left( f(I) = 1 \; \middle| \;  |I|=k\right)\P \left( |I| = k \right)\\
    &\geq  \P \left(f(I) = 1 \; \middle| \;  |I|=S\right)\sum_{k=S}^K \P \left( |I| = k \right)\\
    &\geq \P_S \left( f(I) = 1 \right) \cdot \frac{1}{2} ,
\end{align*}
where the last inequality follows from Theorem 3.2 of~\cite{jogdeo1968} which says that if the mean number of successes of $K$ independent trials is an integer $S$, the median is also $S$. 
\end{proof}
\begin{remark}\label{rem:1}
Applying the above result on the functions $f_1(I): = \mathbbm{1}_{\left\{\| H_{I,I} \|_{2,2} \geq t \right\}}$ and $f_2(I): = \mathbbm{1}_{\left\{\| H_I \|_{\infty,2} \geq t \right\}}$ we get 
\[
\P_S \left (\| H_{I,I} \|_{2,2} \geq r  \right) \leq 2 \P\left ( \| H_{I,I} \|_{2,2} \geq r  \right)
\]
and
\[
\P_S \left( \| H _I \|_{\infty,2} \geq v \right)  \leq 2 \P \left( \| H _I \|_{\infty,2} \geq v \right) .
\]
Even though we stated our results only for the rejective sampling model, all of our proofs consist of first bounding the failure probability under the rejective sampling model by the failure probability under Poisson sampling model. Hence all of our results hold for the Poisson sampling model as well, with the failure bound actually improved by a factor $1/2$.
\end{remark}

\section{Application to sparse approximation}\label{sec:applications}
In this section we apply the derived result to sparse approximation. The starting point of sparse approximation is an underdetermined system of linear equations for which one tries to find the sparsest solution. Assuming that the signal $y$ is a linear combination of $S$ columns of a dictionary $\dico$, we show under which conditions sparse approximation algorithms are successful. To that end we define the following statistical model of our signals.
\begin{definition}[Signal model]\label{signal_model}
We model our signals as
\begin{equation*}
    y = \dico_I x_I =  \sum_{k = 1}^S \atom_{i_k} x_{i_k}   , \quad x_{i_k} = c_k \sigma_k, \; \; \forall k \in \{1, \dots , S \},
\end{equation*}
where $\dico \in \R^{d \times K}$ is a dictionary of $K$ normalised atoms, $I = \{i_1, \dots i_S \}$ is the random support and $c = \{ c_1, \dots c_S \}$ is an arbitrary sequence of strictly positive coefficients. We assume $I \subseteq\mathbb{K}$ is chosen according to the rejective sampling model with probabilities $p_1, \dots , p_K$ such that $\sum_{i = 1}^K p_i = S$ and denote by $W$ the corresponding weight matrix. Further we assume that the signs $\sigma_i$ form an independent Rademacher sequence, i.e. $\sigma_i = \pm 1$ with equal probability.
\end{definition}
This definition allows us to use probabilistic arguments to show that in the majority of cases, sparse approximation algorithms are able to recover the support under mild conditions on the dictionary $\dico$ and on the coefficients $x$. We denote by $\P_{y} : = \P_{\sigma, S}$ the product measure of the signs and the support and by $\mu := \max_{i \neq j} |\ip{\atom_i}{\atom_j}|$ the coherence of the dictionary $\dico$.
\subsection{Thresholding}
We start by considering the fastest and conceptually easiest sparse approximation algorithm. Thresholding works by finding the indices corresponding to the $S$ largest values of $|\ip{y}{\atom_i}|$, i.e.
\begin{align*}
    &\text{find} \quad J = \argmax_{|I| = S} \|\dico_I\transp y\|_1 \quad \text{and}\\
    &\text{reconstruct} \quad x_J = P(\dico_{J})y.
\end{align*}
In slight abuse of notation, let $\| c\|_{\min}:= \min_{i} |c_i|$. In~\cite{scva07}, average case results for thresholding were derived for the uniform case. There, a sufficient condition for thresholding to work with high probability was $S \mu^2 \log(K) \lesssim \| c \|_{\min}^2/ \| c \|_{\max}^2 $. We extend these results to the non-uniform case and show how the structure of the dictionary interacts with the distribution of coefficients.
\begin{theorem}[Thresholding]\label{them:thresh}
Assume that the signals follow the model in \eqref{signal_model}, where the support $I \subseteq\mathbb{K}$ is chosen according to the rejective sampling model with probabilities $p_1, \dots , p_K$ such that $\sum_{i = 1}^K p_i = S$. Further let $W$ denote the corresponding weight matrix and denote by $H = \dico\transp \dico - \mathbb{I}$ the hollow Gram-matrix. If
\begin{align*}
    \mu^2 \leq \frac{\| c \|_{\min}^2}{8 \| c \|_{\max}^2 \log( 4K/\varepsilon)}, \quad \quad \text{and} \quad \quad
    \| H \weights \|^2_{\infty,2} \leq \frac{\| c \|_{\min}^2}{8 e^2 \| c \|_{\max}^2\log(4K/\varepsilon)},
\end{align*}
then thresholding recovers the support with probability at least $1- \varepsilon$.
\end{theorem}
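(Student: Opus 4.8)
The plan is to reduce ``thresholding succeeds'' to a statement about a single vector of (conditionally) Rademacher sums, bound those sums conditionally on the support via Hoeffding's inequality, and then control the random variance parameters that appear by invoking Lemma~\ref{lem:1}.

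\emph{Reduction.} Thresholding recovers $I$ exactly when the $S$ largest of the numbers $|\ip{y}{\atom_i}|$, $i\in\mathbb{K}$, are attained on $I$; a sufficient condition is $\min_{i\in I}|\ip{y}{\atom_i}| > \max_{j\notin I}|\ip{y}{\atom_j}|$. Writing $y=\dico_I x_I$, $H=\dico\transp\dico-\I$ and $z:=H_I x_I\in\R^{K}$, the zero diagonal of $H$ gives $\ip{y}{\atom_i}=x_i+z_i$ for $i\in I$ and $\ip{y}{\atom_j}=z_j$ for $j\notin I$; since $|x_i|\ge\|c\|_{\min}$ for $i\in I$, success is implied by $\|z\|_\infty<\|c\|_{\min}/2$, and hence
\[
\P_y(\text{failure})\;\le\;\P_y\!\left(\|z\|_\infty\ge\tfrac{\|c\|_{\min}}{2}\right)\;\le\;\sum_{i=1}^K\P_y\!\left(|z_i|\ge\tfrac{\|c\|_{\min}}{2}\right).
\]

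\emph{Conditioning on the support.} For a fixed support $I$ the coordinate $z_i=\sum_{k\in I}H_{ik}x_k$ is a weighted Rademacher sum in the signs $\sigma$ with weights $H_{ik}c_k$, so Hoeffding's inequality gives
\[
\P_\sigma\!\left(|z_i|\ge\tfrac{\|c\|_{\min}}{2}\,\Big|\,I\right)\;\le\;2\exp\!\left(-\frac{\|c\|_{\min}^2}{8\|c\|_{\max}^2\sum_{k\in I}H_{ik}^2}\right)\;\le\;2\exp\!\left(-\frac{\|c\|_{\min}^2}{8\|c\|_{\max}^2\|H_I\|_{\infty,2}^2}\right).
\]
The remaining random quantity $\|H_I\|_{\infty,2}$ concentrates around $\|H\weights\|_{\infty,2}$ by Lemma~\ref{lem:1} (which, after the rejective-to-Poisson passage of Lemma~\ref{lemma:f}, rests on a Chernoff bound for the independent sum $\sum_k\delta_k H_{ik}^2$, whose mean is at most $\|H\weights\|_{\infty,2}^2$). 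I would therefore split on the event $\mathcal A:=\{\|H_I\|_{\infty,2}^2\le e^2\|H\weights\|_{\infty,2}^2\}$ and bound $\P_y(\text{failure})\le\P_y(\text{failure}\cap\mathcal A)+\P_S(\mathcal A^c)$. On $\mathcal A$ the Hoeffding exponent is at least $\|c\|_{\min}^2/(8e^2\|c\|_{\max}^2\|H\weights\|_{\infty,2}^2)$, which by the second hypothesis is $\ge\log(4K/\eps)$, so a union bound over the $K$ coordinates makes this contribution $\le\eps/2$; for the complement, Lemma~\ref{lem:1} with $v^2=e^2\|H\weights\|_{\infty,2}^2$ gives a bound of the form $2K\exp(-e^2\|H\weights\|_{\infty,2}^2/\mu^2)$, and the coherence hypothesis — entering through the exponent $v^2/\mu^2$ in Lemma~\ref{lem:1} — is what drives this below $\eps/2$. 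Adding the two pieces yields the claimed probability $1-\eps$.

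\emph{Main obstacle.} The delicate point is precisely the last step: arranging the splitting so that the two stated hypotheses, with these particular constants, are \emph{simultaneously} sufficient. The Hoeffding side forces the threshold for $\|H_I\|_{\infty,2}^2$ to be of order $\|H\weights\|_{\infty,2}^2$ (because of the union over all $K$ coordinates), whereas the tail in Lemma~\ref{lem:1} only becomes useful once the threshold is comparable to $\mu^2$ times a logarithmic factor, so the two requirements pull in opposite directions; choosing $v^2=e^2\|H\weights\|_{\infty,2}^2$ and using both hypotheses together — possibly also the deterministic bound $\|H_I\|_{\infty,2}^2\le\mu^2|I|$ on $\mathcal A^c$ — is what reconciles them, and this bookkeeping of constants is what pins down the exact form of the two conditions. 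A subsidiary but essential technical step, needed before any independent-sum concentration can be applied at all, is the passage from the rejective to the Poisson sampling model via Lemma~\ref{lemma:f}.
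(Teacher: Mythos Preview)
Your plan is exactly the paper's argument: reduce success to $\|H_I x_I\|_\infty < \|c\|_{\min}/2$, split on a threshold for $\|H_I\|_{\infty,2}$, apply Hoeffding (Lemma~\ref{hoeff}) on the good event, and use Lemma~\ref{lem:1} for the complement. The only substantive difference is the choice of the splitting level, and this is precisely where your ``main obstacle'' lives.

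You propose $v^2 = e^2\|HW\|_{\infty,2}^2$. With that choice the Lemma~\ref{lem:1} bound becomes $2K\,e^{-e^2\|HW\|_{\infty,2}^2/\mu^2}$, and to push this below $\varepsilon/2$ you would need a \emph{lower} bound on $\|HW\|_{\infty,2}^2$ relative to $\mu^2$; the hypotheses give only upper bounds on each, so this route stalls (as you suspected). The paper instead sets the threshold to $\gamma^2 = \|c\|_{\min}^2/\bigl(8\|c\|_{\max}^2\log(4K/\varepsilon)\bigr)$, i.e.\ exactly the right-hand side of the coherence hypothesis. This makes the Hoeffding contribution equal to $\varepsilon/2$ on the nose; the first hypothesis then reads $\mu^2\le\gamma^2$, which feeds the exponent $\gamma^2/\mu^2$ in Lemma~\ref{lem:1}, and the second hypothesis reads $e^2\|HW\|_{\infty,2}^2\le\gamma^2$, which makes the base $e\|HW\|_{\infty,2}^2/\gamma^2\le 1/e$. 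So the resolution of the tension you describe is simply to fix $\gamma$ in terms of $c,K,\varepsilon$ rather than in terms of the (potentially tiny) quantity $\|HW\|_{\infty,2}$. The auxiliary deterministic bound $\|H_I\|_{\infty,2}^2\le\mu^2|I|$ you mention is not needed.
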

\begin{proof}
By definition of the algorithm, thresholding recovers the full support if
\[
\| \dico_{I^c}\transp y \|_{\max} <  \| \dico_I\transp y\|_{\min}.
\]
Note that the signals have two sources of randomness, $\sigma$ and $I$. Plugging in the definition of $y$ we derive a bound on the failure probability
\begin{align*}
\P_y(\| \dico_I\transp y\|_{\min} <  \| \dico_{I^c}\transp y \|_{\infty}) & =  \P_y \left( \| \dico_I\transp \dico_I x_I \|_{\min} <  \| \dico_{I^c}\transp \dico_I x_I \|_{\infty} \right) \\
&\leq  \P_y \left( \|c \|_{\min} - \| (\dico_I\transp \dico_I - \mathbb{I}) x_I \|_{\infty} <  \| \dico_{I^c}\transp \dico_I x_I \|_{\infty} \right) \\
& \leq \P_y \left( \| c \|_{\min} < 2 \| H_I x_I \|_{\infty}\right).
\end{align*}
Where we used that $x_{i_k} = \sigma_k c_k $, where $\sigma \in \R^{S}$ is an independent Rademacher sequence. Now as the signs $\sigma$ are independent from the support $I$, we can apply Hoeffding's inequality to each entry of $H_I\sigma$ (Lemma~\ref{hoeff}) and use Lemma~\ref{lem:1} to get
\begin{align*}
\P_y(\| \dico_I\transp y\|_{\min} <  \| \dico_{I^x}\transp y \|_{\max}) \leq  \P_y\left( \| H_I x_I \|_{\infty} \geq \frac{\| c\|_{\min}}{2} \; \middle| \;  \|H_I\|_{\infty,2}  < \gamma\! \right) + \P_S \bigg( \|H_I\|_{\infty,2}  \geq \gamma  \bigg) \\ \leq 2K\exp{\left(- \frac{\| c\|_{\min}^2}{8 \| c\|_{\max}^2 \gamma^2} \right) } + 2K \left( e \frac{\| H \weights \|_{\infty,2}^2}{\gamma^2}  \right) ^{\frac{\gamma^2}{\mu^2}}.
\end{align*}
Setting $\gamma^2 = \frac{\| c\|_{\min}^2}{8 \| c\|_{\max}^2 \log(4K/\varepsilon)}$ we see that the conditions of the Theorem imply that the failure probability does not exceed $\varepsilon$.
\end{proof}
\subsection{OMP}
One of the most popular sparse approximation algorithms is the Orthogonal Matching Pursuit (OMP). This greedy algorithm finds the support iteratively, adding one index at a time to the current support. In every step, it picks the index of the atom which has the largest absolute inner product with the residual and then updates the residual. Initialising $r_0 = y$ and $J_0 = \emptyset$, it 
\begin{align*}
    &\text{finds} \quad j = \argmax_k |\ip{\atom_k}{r_i}| \quad \text{and}\\
    &\text{updates} \quad J_{i+1} = J_i \cup \{j\} \quad \text{resp.} \quad r_{J_{i+1}} = y - P(\dico_{J_{i+1}})y,
\end{align*}
until a stopping criterion is met. Hence to prove that OMP recovers the correct support, one needs to ensure that it picks an atom from the support in each step. So assume OMP has successfully found $J \subseteq I $ in the $i$-th step, it will find another correct atom if
\[
\| \dico_{I^c}\transp r_J \|_{\infty} < \| \dico_{L}\transp r_J \|_{\infty},
\]
where $L := I \setminus J$. Based on this observation we prove the following Theorem.
\begin{theorem}[OMP]\label{them:omp}
Assume that the signals follow the model in \eqref{signal_model}, where the support $I \subseteq\mathbb{K}$ is chosen according to the rejective sampling model with probabilities $p_1, \dots , p_K$ such that $\sum_{i = 1}^K p_i = S$. Further let $W$ denote the corresponding weight matrix. Assume that the hollow Gram-matrix $H = \dico\transp \dico - \mathbb{I}$ satisfies $\| \weights H \weights\|_{2,2} \leq \frac{1}{4 e^2}$. If
\begin{align*}
\| H \weights \|_{\infty,2}^2 &\leq  \min\left\{ \min_{L \subseteq \{ 1, \dots, S \} }\frac{\|c_L \|^2_{\infty}}{16 e^2\|c_L \|_2^2}, \frac{1}{16e^2\log(216 K/\varepsilon)} \right\}\quad \text{and} \\ \mu &\leq \min\left\{\min_{L \subseteq \{ 1, \dots, S \} }\frac{\|c_L \|_{\infty}}{4\|c_L \|_2 \sqrt{\log(218 K/\varepsilon)}} , \frac{1}{4\log(218K/\varepsilon)}\right\},
\end{align*}
then OMP recovers the correct support with probability at least $1 - \varepsilon$.
\end{theorem}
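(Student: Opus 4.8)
The plan is to follow the same template as the thresholding proof, but iterated over the $S$ steps of OMP and complicated by the fact that the residual $r_J$ depends on the projection $P(\dico_J)$, which introduces the inverse Gram matrix $(\dico_J\transp\dico_J)^{-1}$. Fix a step in which OMP has correctly identified $J\subseteq I$ and set $L:=I\setminus J$. I would first express the residual as $r_J = (\I - P(\dico_J))\dico_L x_L$, so that for any atom index $k$,
\[
\dico_k\transp r_J = \dico_k\transp \dico_L x_L - \dico_k\transp \dico_J (\dico_J\transp\dico_J)^{-1}\dico_J\transp \dico_L x_L .
\]
The success condition $\| \dico_{I^c}\transp r_J \|_{\infty} < \| \dico_{L}\transp r_J \|_{\infty}$ then has to be turned into two one-sided estimates: a \emph{lower} bound on $\| \dico_{L}\transp r_J \|_{\infty}$ of the form $\|c_L\|_\infty - (\text{error})$, using that the diagonal of $\dico_L\transp\dico_L$ is $\I$ and pulling out the largest coefficient; and an \emph{upper} bound on $\| \dico_{I^c}\transp r_J \|_{\infty}$ in terms of $\|H_I x_I\|_\infty$ or $\|H_{I^c,I}x_I\|_\infty$-type quantities. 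In both bounds the cross terms are of the shape $\dico_k\transp \dico_J M \dico_J\transp \dico_L x_L$ with $M=(\dico_J\transp\dico_J)^{-1}$; these I would control by splitting off $\|M\|_{2,2}\le \tfrac{1}{1-\vartheta_J}$ (bounded by a constant using Corollary~\ref{cor:1} and the hypothesis $\|\weights H\weights\|_{2,2}\le \tfrac{1}{4e^2}$, which makes the operator-norm tail in Theorem~\ref{them:1} applicable with $r$ a small constant) and then bounding the remaining factors $\dico_J\transp \dico_L x_L$, $\dico_k\transp\dico_J$ by submatrices of $H$ restricted to $I$, i.e. by $\|H_I\|_{\infty,2}$ together with a Rademacher average.

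The randomness is handled exactly as in the thresholding proof: condition on $I$, so that the signs $\sigma$ are an independent Rademacher vector, apply Hoeffding (Lemma~\ref{hoeff}) entrywise to the linear-in-$\sigma$ expressions $H_I\sigma$ (and the analogous expressions coming from the projection term), and bound the resulting Gaussian-type tails by $\exp(-\text{const}\cdot \|c_L\|_\infty^2/(\|c_L\|_2^2\gamma^2))$ with $\gamma$ a threshold for $\|H_I\|_{\infty,2}$. The event $\{\|H_I\|_{\infty,2}\ge\gamma\}$ is then controlled by Lemma~\ref{lem:1}, and the event $\{\vartheta_J\ge \text{const}\}$ (needed to bound $\|M\|_{2,2}$) by Corollary~\ref{cor:1} with $H$ replaced by its restriction — note $\|\weights H\weights\|_{2,2}$ only decreases under such restriction, so the hypothesis transfers. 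Choosing $\gamma^2 \asymp \min_L \|c_L\|_\infty^2/(\|c_L\|_2^2\log(K/\eps))$ matches the two hypotheses of the theorem (the $\|H\weights\|_{\infty,2}^2$ condition controls the expected size of $\|H_I\|_{\infty,2}$, and the $\mu$ condition makes the exponent $\gamma^2/\mu^2$ in Lemma~\ref{lem:1} large enough for the bound $(e\|H\weights\|_{\infty,2}^2/\gamma^2)^{\gamma^2/\mu^2}$ to be small). Finally, a union bound over the at most $S\le K$ steps of OMP — and over the possible sets $L$, which is why the $\min_{L\subseteq\{1,\dots,S\}}$ appears — gives total failure probability at most $\eps$, after absorbing the constants $216$, $218$ from Theorem~\ref{them:1} into the logarithms.

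The main obstacle I expect is the bookkeeping around the projection term $P(\dico_J)$: unlike thresholding, where the residual is literally $y$, here one must simultaneously keep $\|(\dico_J\transp\dico_J)^{-1}\|_{2,2}$ bounded, keep the cross-Gram factors small, and ensure all the relevant random matrices/vectors are still \emph{linear} in $\sigma$ (or reduced to something linear) so that Hoeffding applies conditionally on $I$. In particular, $J$ itself is determined by the earlier (random) steps of the algorithm, so some care is needed: either one argues that it suffices to take a union bound over all possible $J\subseteq I$ (there are at most $2^S$, absorbed into the exponent since $\mu^2\lesssim 1/\log K$ and $\|H\weights\|_{\infty,2}^2\lesssim 1/\log K$ give room), or one notes that conditioning on $I$ fixes the set from which $J$ is drawn and the sign randomness is untouched. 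Getting the constants to line up with the stated $16e^2$, $4$, $216$, $218$ is routine once the structural decomposition above is in place, so I would not dwell on it.
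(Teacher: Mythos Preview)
Your plan diverges from the paper's proof in one essential respect: the paper does \emph{not} use the sign randomness $\sigma$ at all. After expanding the residual and isolating the four cross terms (your $\dico_{I^c}\transp\dico_L$, $\dico_{I^c}\transp\dico_J$, $\dico_L\transp\dico_L-\I$, $\dico_L\transp\dico_J$), the paper bounds each in $\|\cdot\|_{\infty,2}$ by $\|H_I\|_{\infty,2}$ and then applies the crude estimate $\|Mx_L\|_\infty\le \|M\|_{\infty,2}\|c_L\|_2$ --- pure Cauchy--Schwarz, no Hoeffding. Combined with $\|\dico_J\transp\dico_L\|_{2,2}\le\vartheta_I$ and $\|(\dico_J\transp\dico_J)^{-1}\|_{2,2}\le 1/(1-\vartheta_I)$, this yields the deterministic sufficient condition $\|H_I\|_{\infty,2}\cdot 2/(1-\vartheta_I)<\|c_L\|_\infty/\|c_L\|_2$. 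On the event $\{\vartheta_I<1/2\}\cap\{\|H_I\|_{\infty,2}<\gamma\}$ with $\gamma:=\min_L \|c_L\|_\infty/(4\|c_L\|_2)$, OMP therefore succeeds at \emph{every} step for \emph{every} sign pattern, so no union bound over $J$ or over steps is needed; the only randomness left is in $I$, handled by one call each to Corollary~\ref{cor:1} and Lemma~\ref{lem:1}. The Remark following the theorem flags exactly this: the proof uses only the support randomness.

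Your Hoeffding-based route is not merely longer --- it has a real gap at the point you yourself identify. Once $J$ is chosen adaptively by the algorithm, the expression $\dico_{I^c}\transp\dico_J(\dico_J\transp\dico_J)^{-1}\dico_J\transp\dico_L x_L$ is \emph{not} linear in $\sigma$ (the matrix depends on $\sigma$ through $J$), so Hoeffding does not apply conditionally on $I$ alone. Your proposed fix, a union bound over the $2^S$ subsets $J\subseteq I$, is the honest remedy, but it is not absorbable under the stated hypotheses: you would need the Hoeffding exponent to dominate $S$, i.e.\ essentially $S\lesssim\log(K/\eps)$, which the theorem does not assume. Moreover, your announced choice $\gamma^2\asymp\min_L\|c_L\|_\infty^2/(\|c_L\|_2^2\log(K/\eps))$ forces the $\log$ factor into the $\|H\weights\|_{\infty,2}$ condition \emph{together} with the $\|c_L\|_\infty/\|c_L\|_2$ ratio, and (via the exponent $\gamma^2/\mu^2$ in Lemma~\ref{lem:1}) forces $\mu\lesssim\min_L\|c_L\|_\infty/(\|c_L\|_2\log(K/\eps))$ rather than the stated $\sqrt{\log}$; so even if the adaptivity issue were patched, you would arrive at strictly stronger hypotheses than those in the theorem.
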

\begin{proof}
Set $\|\dico_{I}\transp \dico_{I} - \mathbb{I} \|_{2,2}\ =: \vartheta_I$ and assume that $\vartheta_I < 1/2$. We start by expanding the residual in step $i$ 
\[
r_J = y - P(\dico_{J})y = \dico_I x_I -P(\dico_{J}) \dico_I x_I = \dico_{I \setminus J} x_{I \setminus J} -\dico_{J} (\dico_{J}\transp\dico_{J})^{-1} \dico_{J}\transp \dico_{I \setminus J} x_{I \setminus J}
\]
Set $L := I \setminus J$. By definition, OMP finds another correct atom in the next step if
\begin{equation}\label{omp:eq:1}
\| \dico_{I^c }\transp( \dico_{L} x_{L} -\dico_{J} (\dico_{J}\transp\dico_{J})^{-1} \dico_{J}\transp \dico_{L} x_{L})  \|_{\infty} < \| \dico_{L}\transp (\dico_{L} x_{L} -\dico_{J} (\dico_{J}\transp\dico_{J})^{-1} \dico_{J}\transp \dico_{L} x_{L} ) \|_{\infty},
\end{equation}
i.e. the inner products with the residual of the remaining atoms in the support are bigger than the inner products with the residual of atoms outside the support. Writing this differently, we get the sufficient condition
\begin{align*}
\| \dico_{I^c }\transp \dico_{L}x_{L}  \|_{\infty} &+ \|\dico_{I^c }\transp  \dico_{J} (\dico_{J}\transp\dico_{J})^{-1} \dico_{J}\transp \dico_{L}x_{L}  \|_{\infty} \\ & < \| x_{L} \|_{\infty} - \| (\dico_{L}\transp \dico_{L} -\mathbb{I})x_{L}  \|_{\infty} - \| \dico_{L} \transp \dico_{J} (\dico_{J}\transp\dico_{J})^{-1} \dico_{J}\transp \dico_{L} x_{L} \|_{\infty},
\end{align*}
Note that
\begin{align*}
\max \left\{
\| \dico_{I^c}\transp \dico_{L}  \|_{\infty,2},
\| \dico_{I^c}\transp \dico_{J}  \|_{\infty,2},
\| \dico_{L}\transp \dico_{L} - \mathbb{I} \|_{\infty,2},
\| \dico_{L}\transp\dico_{J}\|_{\infty,2}  \right\}
\leq \|  H_I \|_{\infty,2}.
\end{align*}
So OMP works if
\begin{equation}\label{omp:eq:2}
2\|  H_I \|_{\infty,2} \|x_{L} \|_2 +2\|  H_I \|_{\infty,2} \|(\dico_{J}\transp\dico_{J})^{-1} \|_{2,2} \|\dico_{J}\transp \dico_{L}\|_{2,2}\| x_{L}  \|_{2} < \| x_{L} \|_{\infty},
\end{equation}
By properties of the operator norm we have $\| \dico_{J}\transp \dico_{L}\|_{2,2} \leq \vartheta_I $ and $\| (\dico_{J}\transp\dico_{J})^{-1}\|_{2,2} \leq \frac{1}{1-\vartheta_I}$. Plugging this into \eqref{omp:eq:2} we see that OMP will pick a correct atom in the next step, if
\[
\|  H_I \|_{\infty,2} \left(2+2\frac{\vartheta_I}{1-\vartheta_I}\right) < \frac{\| x_L \|_{\infty}}{\| x_L\|_2}.
\]
So on the set $ \left\{ \vartheta_I  < 1/2 \right\}$ the columns of $\dico_I$ are linearly independent and we need to have $\displaystyle \| H_I \|_{\infty,2} < \min_{L \subseteq \{ 1, \dots, S \}}\frac{\|c_L\|_{\infty}}{4\| c_L\|_2} =: \gamma$ for OMP to find the correct support. So by Corollary~\ref{cor:1} and Lemma~\ref{lem:1} we get
\begin{align*}
\P_S(\| \dico_{I^c}\transp r_J \|_{\infty} & \geq \| \dico_{L}\transp r_J \|_{\infty}) \leq \P_S(\vartheta_I \geq 1/2) + \P_S(\| H_I \|_{\infty,2} \geq \gamma) \\ & \leq 216 K \exp{\left(-\min\left\{\frac{1}{16e^2\|  H \weights \|_{\infty,2}^2 } ,\frac{1}{4\mu} \right\}\right)} + 2K \left( e \frac{\| H \weights \|_{\infty,2}^2}{\gamma^2}  \right) ^{\frac{\gamma^2}{\mu^2}}.
\end{align*}
Owing to the conditions on $\mu$ and $\|H \weights \|_{\infty,2}$ in the theorem, the right hand side does not exceed $\varepsilon$.
\end{proof}
\begin{remark}
Note that for coefficients $c_k \sim \alpha^k$ we can always lower bound $\|c_L\|_{\infty}/\| c_L\|_2 >\sqrt{1- \alpha^2}$. So in the case of uniformly distributed supports ($p_i = S/K$) and a very incoherent dictionary the conditions above reduce to $$ S \mu^2 \lesssim 1- \alpha^2 \quad \mbox{and}\quad S \mu^2 \log K \lesssim 1, $$ which are essentially the same conditions recently derived in \cite{sc18omp} for exactly sparse signals. This is quite surprising, since this new proof is not only shorter but more importantly does not assume random signs of the coefficients but only a random support.
\end{remark}
\subsection{BP}
\noindent
A very popular alternative to the above algorithms is the Basis Pursuit principle. Instead of tackling the NP-hard problem of finding the sparsest solution with greedy methods, it instead aims to solve the convex relaxation
\begin{equation}\label{BP}
\hat{x} = \argmin \| x \|_1  \quad \text{s.t.} \quad y = \dico x.
\end{equation}
The average case performance in the uniform case of this optimisation problem has been extensively studied \cite{tr08, Randall2009, candes2009}. We give a short proof how these results can be transferred to the non-uniform case.
\begin{theorem}\label{them:bp}
Assume that the signals follow the model in \eqref{signal_model}, where the support $I \subseteq\mathbb{K}$ is chosen according to the rejective sampling model with probabilities $p_1, \dots , p_K$ such that $\sum_{i = 1}^K p_i = S$. Further let $W$ denote the corresponding weight matrix. Assume that the hollow Gram-matrix $H = \dico\transp \dico - \mathbb{I}$ satisfies $\| \weights H \weights\|_{2,2} \leq \frac{1}{4 e^2}$. If
\begin{align*}
    \mu \leq \frac{1}{4 \log(220 K/\varepsilon)}, \quad \quad \text{and} \quad \quad
    \| H \weights \|^2_{\infty,2} \leq \frac{1}{16 e^2 \log(220K/\varepsilon)},
\end{align*}
then BP recovers the correct coefficients with probability at least $1 - \varepsilon$.
\end{theorem}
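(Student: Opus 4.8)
The plan is to run the by-now classical dual-certificate (exact recovery) argument for Basis Pursuit, adapted to the non-uniform support model in exactly the same way as the thresholding and OMP proofs above, so that the sign randomness is handled by Hoeffding's inequality and the support randomness by Corollary~\ref{cor:1} and Lemma~\ref{lem:1}.

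First I would recall the sufficient condition for exact recovery: $\hat x = x_I$ is the unique minimiser of \eqref{BP} provided $\dico_I$ has full column rank and there is a dual vector $v$ with $\dico_I\transp v = \signop(x_I)$ and $\| \dico_{I^c}\transp v\|_\infty < 1$. I would use the canonical certificate $v := \dico_I (\dico_I\transp \dico_I)^{-1}\signop(x_I)$, which is well defined and satisfies the first two requirements on the event $\{\vartheta_I < 1\}$, where $\vartheta_I := \| \dico_I\transp \dico_I - \mathbb{I}\|_{2,2}$. Hence BP succeeds as soon as $\vartheta_I<1$ and $\max_{k\in I^c}\left| \ip{w_k}{\sigma}\right| < 1$, where $w_k := (\dico_I\transp\dico_I)^{-1}\dico_I\transp\dico_k$ and $\sigma := \signop(x_I)\in\{\pm1\}^S$ is, by the signal model, an independent Rademacher sequence.

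Next, I would decouple the two sources of randomness. For $k\in I^c$ the $k$-th row of $H_I$ is exactly $\dico_I\transp\dico_k$ (no diagonal entry is removed), so $\|\dico_I\transp\dico_k\|_2 \le \|H_I\|_{\infty,2}$, and on $\{\vartheta_I< 1/2\}$ one gets $\|w_k\|_2 \le (1-\vartheta_I)^{-1}\|H_I\|_{\infty,2}\le 2\|H_I\|_{\infty,2}$. Since $\sigma$ is independent of $I$, I would condition on a realisation of $I$ lying in the good event $\{\vartheta_I<1/2\}\cap\{\|H_I\|_{\infty,2}<\gamma\}$ and apply Hoeffding's inequality (Lemma~\ref{hoeff}) to each $\ip{w_k}{\sigma}$ together with a union bound over the at most $K$ indices $k$, giving a conditional failure probability at most $2K\exp(-1/(8\gamma^2))$. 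Splitting off the bad support events then yields
\[
\P_y(\text{BP fails}) \le \P_S\!\left(\vartheta_I \ge \tfrac12\right) + \P_S\!\left(\|H_I\|_{\infty,2}\ge \gamma\right) + 2K\exp\!\left(-\frac{1}{8\gamma^2}\right),
\]
where the first term is controlled by Corollary~\ref{cor:1} with $r=1/2$ (admissible since $\|\weights H\weights\|_{2,2}\le 1/(4e^2)$), the second by Lemma~\ref{lem:1}, and the third is elementary. Choosing $\gamma^2 \asymp 1/\log(K/\varepsilon)$ and inserting the hypotheses $\mu \lesssim 1/\log(K/\varepsilon)$ and $\|H\weights\|_{\infty,2}^2 \lesssim 1/(e^2\log(K/\varepsilon))$ forces each of the three terms below $\varepsilon/3$, the explicit numerical constants (such as $220$) being picked to absorb the prefactors $216K$, $2K$, $2K$.

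I do not expect a deep obstacle here: once Corollary~\ref{cor:1} and Lemma~\ref{lem:1} are available, this is the same two-source-of-randomness bookkeeping as in Theorems~\ref{them:thresh} and \ref{them:omp}. The two points requiring genuine care are (i) invoking the correct strict-inequality version of the BP exact-recovery condition and checking that the canonical dual certificate is legitimate on $\{\vartheta_I<1\}$, and (ii) balancing the threshold $\gamma$ so that the Hoeffding term, the conditioning term from Lemma~\ref{lem:1}, and the subdictionary-conditioning term from Corollary~\ref{cor:1} all sit under $\varepsilon$ simultaneously — which is what pins down the precise constants in the hypotheses.
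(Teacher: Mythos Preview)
Your proposal is correct and matches the paper's proof essentially step for step: the paper also invokes the Tropp/Fuchs sufficient condition $\|\dico_{I^c}\transp\dico_I(\dico_I\transp\dico_I)^{-1}\sigma_I\|_\infty<1$, bounds the relevant $\|\cdot\|_{\infty,2}$ by $\|H_I\|_{\infty,2}/(1-\vartheta_I)$, splits into the same three events, applies Corollary~\ref{cor:1}, Lemma~\ref{lem:1}, and Hoeffding (Lemma~\ref{hoeff}), and sets $\gamma^2 = 1/(8\log(220K/\varepsilon))$. The only cosmetic difference is that you phrase the recovery condition in dual-certificate language whereas the paper quotes it as a black-box proposition.
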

\begin{proof}
We use results for fixed supports such that $\ell_1$ minimisation yields the exact solution \cite{troppl1, Fuchs2004}. Then we show that under the assumptions of the theorem these conditions are satisfied with high probability. 
\begin{proposition}[\cite{troppl1, Fuchs2004}]
Assume $y = \sum_{i \in I}\atom_i c_i \sigma_i$, for some $I\subset \{1,..,K \}$ with $|I|=S$. If
\begin{equation*}
    \|\dico_{I^c} \transp \dico_I (\dico_I\transp \dico_I)^{-1}  \sigma_I\|_{\infty} < 1,
\end{equation*}
then $x$ is the unique solution to the $l_1$-minimisation problem~\eqref{BP}.
\end{proposition}
Now set $M := \dico_{I^c} \transp \dico_I (\dico\transp_I \dico_I)^{-1}$ and $\vartheta_I : = \|\dico_I \transp \dico_I - \mathbb{I} \|$. As usual we note that
\[
\| M \|_{\infty,2} =  \| \dico_{I^c} \transp \dico_I (\dico\transp_I \dico_I)^{-1}  \|_{\infty,2} \leq \| \dico_{I^c} \transp \dico_I \|_{\infty,2} \| (\dico\transp_I \dico_I)^{-1}\|_{2,2} \leq \| H_{I} \|_{\infty,2} \frac{1}{1- \vartheta_I}.
\]
Now Corollary \ref{cor:1} together with applying Hoeffding's inequality to each entry of $M\sigma$ (Lemma~\ref{hoeff}) and Lemma~\ref{lem:1} yield
\begin{align*}
    \P_y \left( \| M \sigma \|_{\infty}   \geq 1  \right) &\leq \P_y\left( \| M \sigma \|_{\infty} \geq 1 \; \middle| \;  \|M\|_{\infty,2}  \leq 2 \gamma \right)  + \P_S\left(\vartheta_I \geq 1/2\right) + \P_S(\| H_I \|_{\infty,2} \geq \gamma) \\
    &\leq 2 K \exp{\left( -\frac{1}{8 \gamma^2}\right)} + 216 K \exp{\left( -\min\left\{\frac{1}{16 e^2 \| H \weights \|_{\infty,2}^2},\frac{1}{4 \mu} \right\}\right)} \\ &\hspace{70mm}+  2K \left( e \frac{\| H \weights \|_{\infty,2}^2}{\gamma^2}  \right) ^{\frac{\gamma^2}{\mu^2}}.
\end{align*}
Setting $\gamma^2 =  \frac{1}{8 \log(220K/ \varepsilon)}$ we see that under the conditions of the Theorem, the failure probability is bounded by $\varepsilon$.
\end{proof}
To illustrate our results we conduct the following small experiment. We take the 2D Haar-Wavelet decomposition of $1000$ randomly chosen normalised patches $y_n$ of size $64\times 64$ from the image \textit{Peppers} before applying a threshold of $\sqrt{\log(d)/d}/6$ for $d = 64^2$ on the coefficients to get a sparse approximation. Counting how often each atom is used we get a proxy for the probability of any atom being in the sparse support $I$ Figure~\ref{fig:dct_subs} (c-d). We denote by $W$ the corresponding weight matrix and by $D$ the vectorised 2D Haar-Wavelet basis. Now we are given two measurement matrices derived from subsampled vectorised 2D-DCT matrices which we denote by $A_1 \in \R^{m \times d }$ and $A_2 \in \R^{m \times d }$. The subsampling pattern is generated by two different subsampling strategies - see Figure~\ref{fig:dct_subs} (a-b). For our experiment we set $m = 512$. We are tasked with solving the minimisation problem 
\begin{equation}
\hat{x} = \argmin \| x \|_1  \quad \text{s.t.} \quad A_i y = A_i D x
\end{equation}
and are given the choice between the two measurement matrices $A_1$ and $A_2$. 
\begin{figure}[ht]
\begin{subfigure}[c]{0.24\linewidth}  
  \centering
  \includegraphics[width=\linewidth]{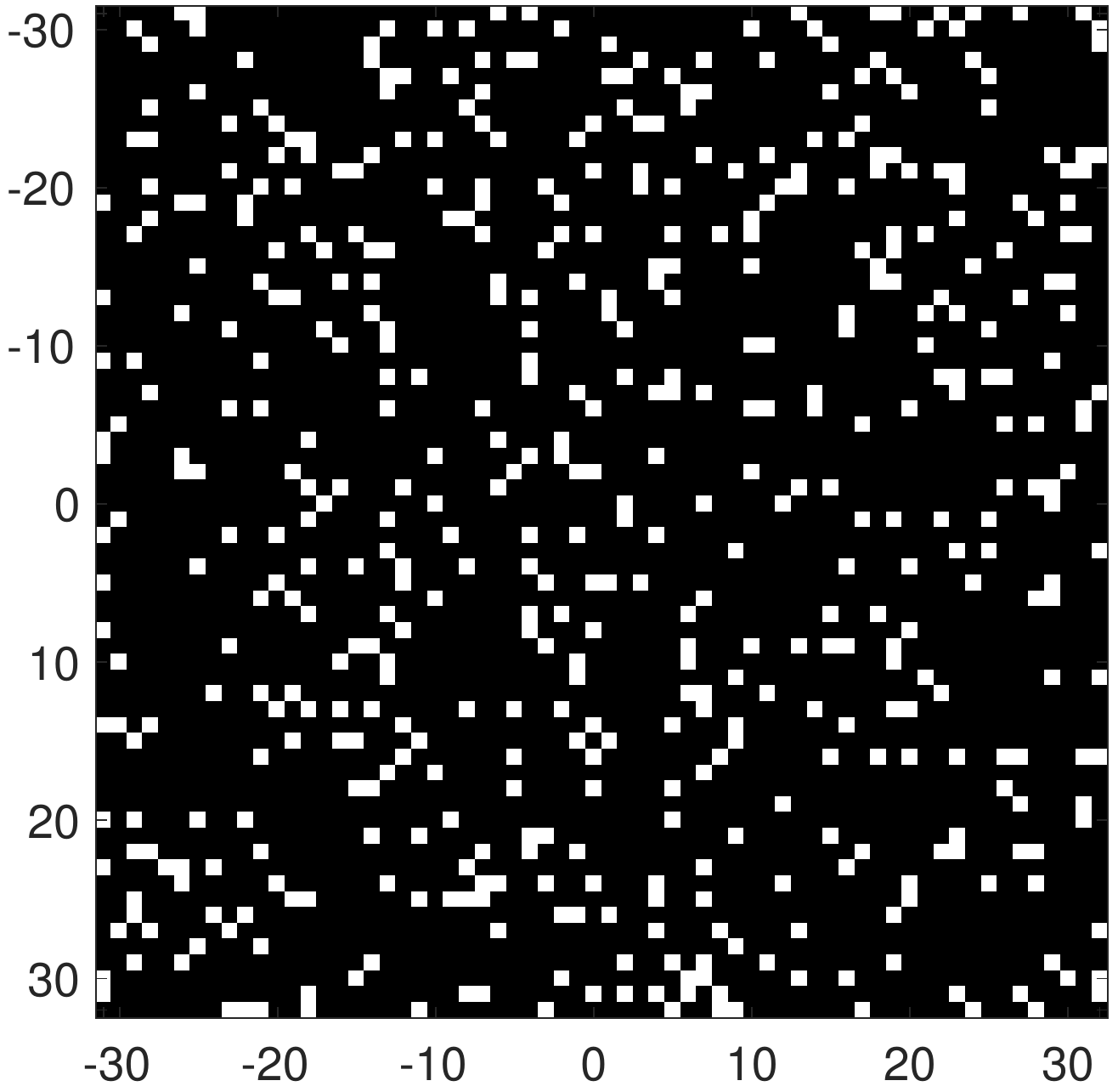}
  \caption{}
\end{subfigure}
\begin{subfigure}[c]{0.24\linewidth}  
  \centering
  \includegraphics[width=\linewidth]{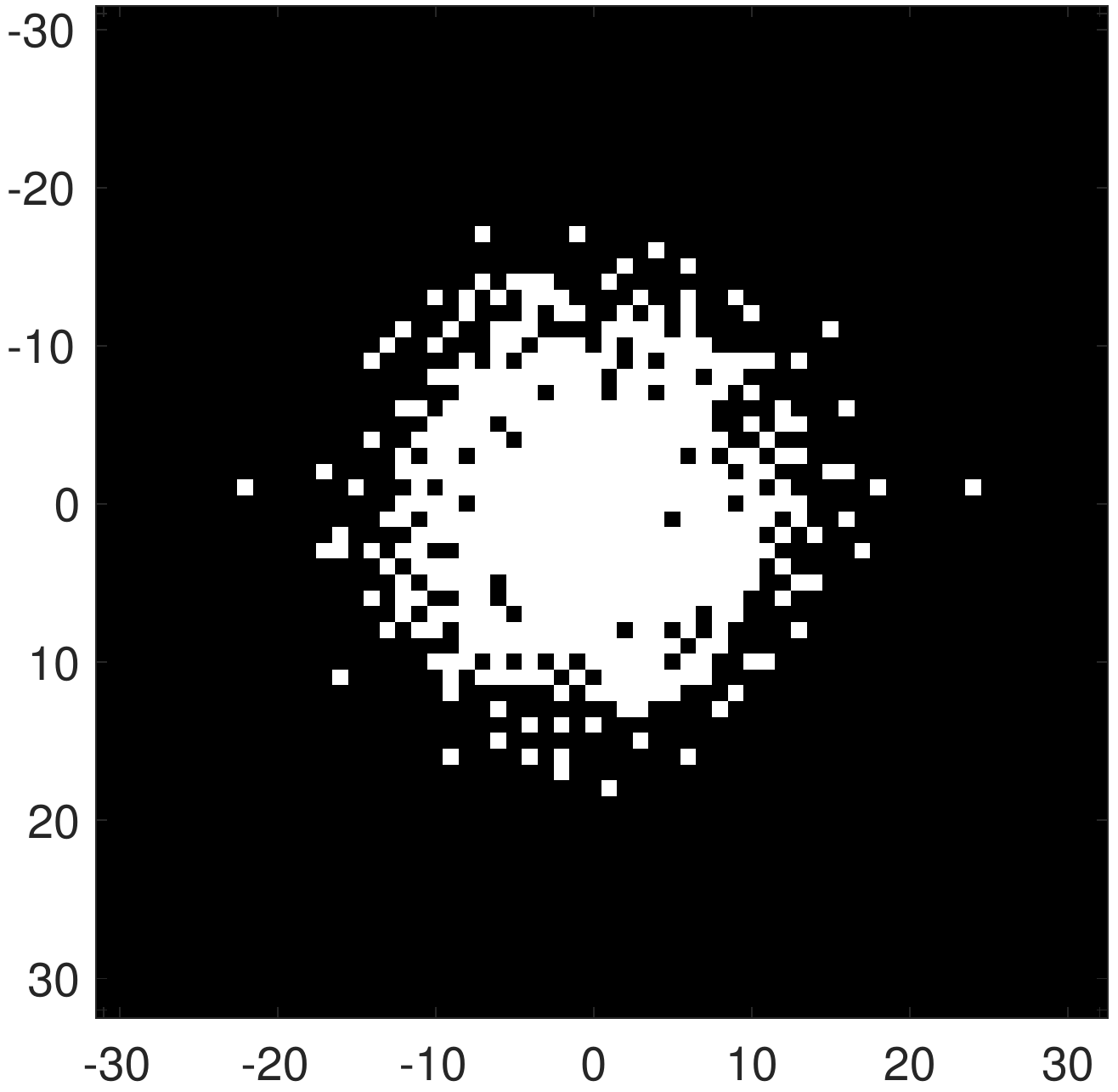}
  \caption{}
\end{subfigure}
\begin{subfigure}[c]{0.25\linewidth}  
  \centering
  \includegraphics[width=\linewidth]{Figure2/weights_Wavelet_6_11_2020.pdf}
  \caption{}
\end{subfigure}
\begin{subfigure}[c]{0.24\linewidth}  
  \centering
  \includegraphics[width=\linewidth]{Figure2/coeffs_Haar_Wavelet_2_6_11_2020.pdf}
  \caption{}
\end{subfigure}
\caption{(a) The K-space $\{(k_1,k_2) : - \sqrt{K}/2 + 1 \leq k_1,k_2 \leq \sqrt{K}/2\}$ with the frequencies used for the measurement matrix $A_1$ (a) and the measurement matrix $A_2$ (b). (c) Expectation of each atom to be in the support (blue) and average expectations for comparison (red) on a log scale. (d) Locations of non-zero coefficients of patches in the 2D-Haar Wavelet Basis.}
\label{fig:dct_subs}
\end{figure}
Our results tell us that as long as the sparse supports of our signals follow the distribution described by the weight matrix $\weights$, we should pick the sensing dictionary $A_i$ that minimises the quantities $\mu$, $\| H \weights\|_{\infty,2}$ and $\|\weights H \weights\|_{2,2}$ (where $H$ is the hollow Gram matrix $A_i D D\transp A_i \transp - \diag(A_i D D\transp A_i \transp)$). Looking at Table~\ref{tbl:dct_subs} columns 1-3 we see that for signals following the distribution specified by $\weights$, our results suggest $A_2$ yields better performance.
\begin{table}[ht]
    \centering
    \begin{tabular}{c c c c c}
       \toprule
        &$\mu$ & $\| H \weights \|_{\infty,2}$ & $\| \weights H \weights \|_{2,2}$ & MSE\\
        \midrule
        $A_1$ & 0.89 &  3.80 & 2.80 & 0.18\\
        $A_2$ & 0.98 & 0.84 & 0.87  & 0.06\\
        \bottomrule
    \end{tabular}
    \caption{The first line corresponds to the uniform subsampling strategy, the second line to the variable density subsampling strategy.}
    \label{tbl:dct_subs}
\end{table}
To test the actual performance, we used BP to recover the coefficients $x_n$ from the set of incomplete measurements $A_i y_n = A_i D x_n$. Note that the coefficients $x_n$ are not sparse, but compressible. Looking at the mean squared error (MSE) $\frac{1}{N}\sum_{n=1}^N\|y_n - D \hat{x}_n \|^2_2$ in Table~\ref{tbl:dct_subs}, we see that even though strictly speaking our theory does not apply here (as these signals are not perfectly sparse) the quantities $\| H \weights\|_{\infty,2}$ and $\|\weights H \weights\|_{2,2}$ seem to be good predictors of average performance for signals where the sparse support (in this case of the biggest entries) follows a distribution specified by a weight matrix $\weights$. 
\section{Sensing dictionaries and preconditioning}\label{sec:sensing}
As an application of our results we construct a sensing dictionary to improve the average performance of a dictionary for thresholding and OMP, given that we know the distribution of supports. We then extend these ideas to BP via preconditioning. \\
In the most general sense a sensing dictionary\footnote{Note that strictly speaking $\Psi\neq \dico$ is not actually a dictionary, as the columns are not normalised.} $\pdico$ for a given dictionary $\dico$ is a matrix of the same size as $\dico$, whose columns satisfy $\ip{\patom_k}{\atom_k} = 1$ for all $k \in \mathbb{K}$. It can be used in greedy algorithms to replace the original dictionary in the atom selection step. Sensing dictionaries improving the worst case performance of OMP and thresholding were first characterised and constructed in~\cite{scva08}. In~\cite{scva07} those ideas were generalised to construct sensing dictionaries that improve the average performance. We extend these average case results to non-uniformly distributed supports to see how the distribution interacts with the structure of the sensing dictionary. \\
The main idea in thresholding and OMP is to determine which atoms to include in the support by looking at the absolute inner products between the signal and the atoms. Using a sensing dictionary changes this step in the thresholding algorithm to
\begin{align*}
    &\text{find} \quad J = \argmax_{|I| = S} \|\pdico_I\transp y\|_1 \quad \text{and}\\
    &\text{reconstruct} \quad x_J = P(\dico_{J})y. 
\end{align*}
For OMP, similarly, the sensing dictionary comes into play when choosing the next atom to add to the support while the residual update step stays the same. Initialising $r_0 = y$ and $J_0 = \emptyset$, for OMP with sensing dictionary $\pdico$ one has to
\begin{align*}
    &\text{find} \quad j = \argmax_k |\ip{\patom_k}{r_i}| \quad \text{and}\\
    &\text{update} \quad J_{i+1} = J_i \cup j \quad \text{resp.} \quad r_{J_{i+1}} = y - P(\dico_{J_{i+1}})y,
\end{align*}
until a stopping criterion is met. Now we will show how to construct a sensing dictionary given knowledge about the distribution of the supports.\\
Assuming that the distribution of our supports follows a Poisson or rejective sampling model with known weight matrix $\weights$, Theorems \ref{lem:thr_sens} and \ref{lem:omp_sens} in the appendix show that a sensing dictionary with good average case performance should ideally minimise $\| (\pdico \transp \dico - \mathbb{I}) \weights \|_{\infty,2 }$. We now try to find $\pdico$ such that this quantity is minimised under the constraint that $\diag(\pdico\transp \dico) = \mathbb{I}$. First note that the quantity $\| (\pdico \transp \dico - \mathbb{I}) \weights  \|_{\infty,2}^2$ is bounded from above by $\| (\pdico \transp \dico - \mathbb{I}) \weights\|_F^2$. Minimising the Frobenius norm instead of the maximum row norm has the big advantage that there exists an easy to find analytic solution. For ease of notation let $ P := W^2$. Following~\cite{scva07} we use Lagrangian multipliers and derive both the objective and the constraint function along $\patom_j$ to get
\[
\frac{d}{d \patom_j}\| \pdico\transp \dico \weights \|_F^2 = \sum_i 2 \ip{\atom_i}{\patom_j}\atom_i p_i = 2 \dico P \dico\transp \patom_j 
\]
\[
\frac{d}{d \patom_j}\ip{\atom_j}{\patom_j} = \atom_j.
\]
So we see that for thresholding and OMP, the sensing dictionary should be set to
\[
\pdico := (\dico P \dico\transp)^{-1}\dico D,
\]
where $D$ is a diagonal matrix s.t. $\ip{\atom_i}{\patom_i} = 1$ for all $i \in \mathbb{K}$. This compares nicely to the result in~\cite{scva07}, where they arrived at $\pdico = (\dico \dico\transp)^{-1}\dico D$ for the special case $p_i = S/K$. This shows how the distribution of coefficients changes the optimal sensing dictionary via the diagonal matrix $P$. Figures \ref{tbl:recovery_rates_th} and \ref{tbl:recovery_rates_omp} show how the performance of thresholding and OMP improves when using sensing dictionaries for various dictionaries and distributions.\\
For BP it is not that simple to use a different sensing dictionary. Instead we use preconditioning, multiplying the original dictionary by an invertible matrix from the left and by a diagonal matrix from the right. Inspired by the heuristic argument above, we set
\[
\pdico = (\dico P \dico\transp)^{-1/2}\dico D^{1/2},
\]
where $D$ is a diagonal matrix s.t. $\ip{\patom_i}{\patom_i} = 1$ for all $i \in \{1, \dots ,K \}$. We then change the BP minimisation problem to
\[
\min \| z\|_1 \quad \text{such that} \quad \Tilde{y} = \pdico z,
\]
where $\Tilde{y} = (\dico P \dico\transp)^{-1/2} y$. This is equivalent to the original optimisation problem, as $D$ is a diagonal matrix with positive entries on its diagonal and $(\dico P \dico\transp)^{-1/2}$ is invertible.\\
\begin{figure}[ht]
\begin{subfigure}[c]{0.5\linewidth}  
      \centering
  \includegraphics[width=0.7\linewidth]{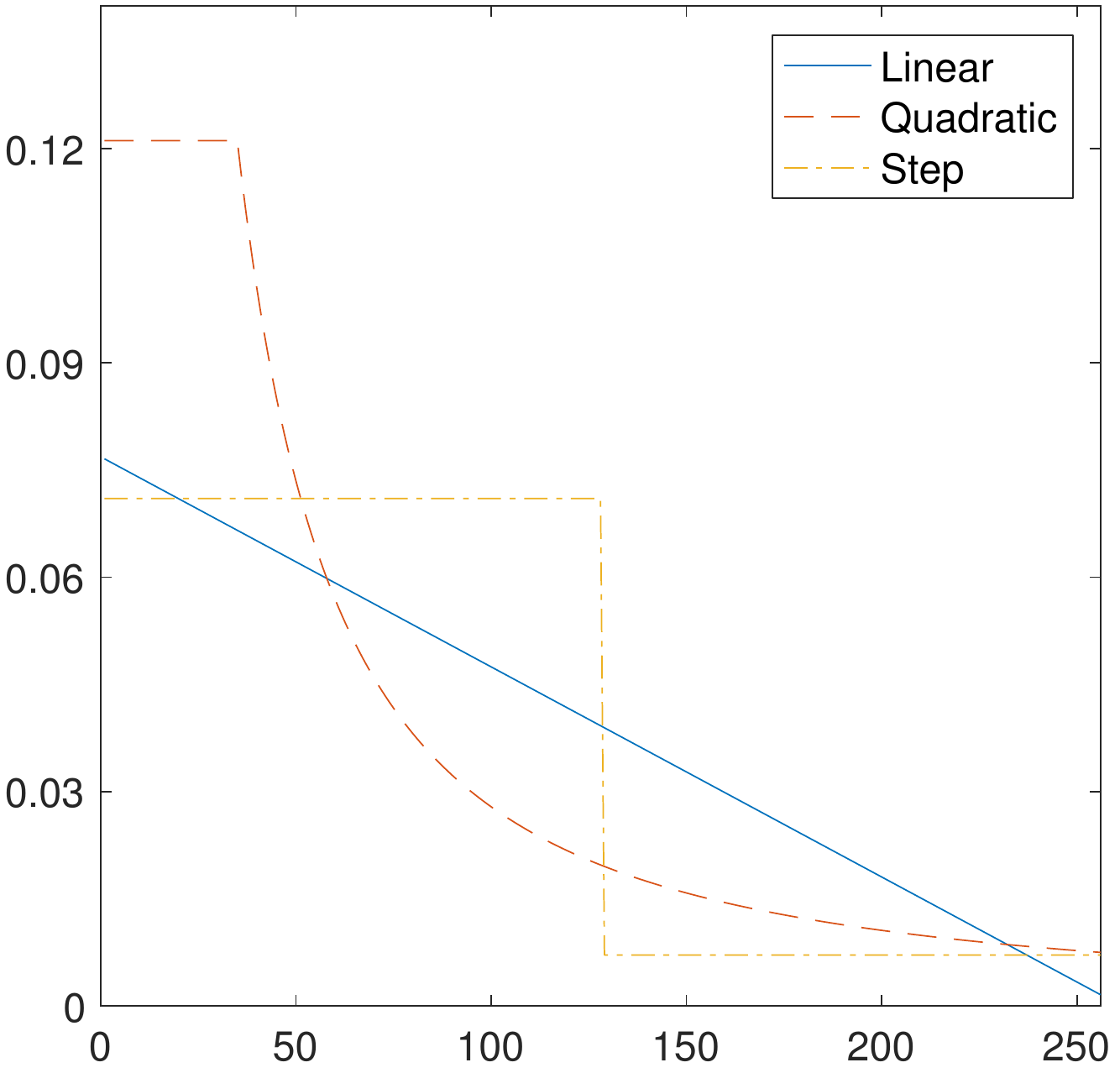}
  \caption{}
\end{subfigure}
\begin{subfigure}[c]{0.5\linewidth}  
      \centering
  \includegraphics[width=0.7\linewidth]{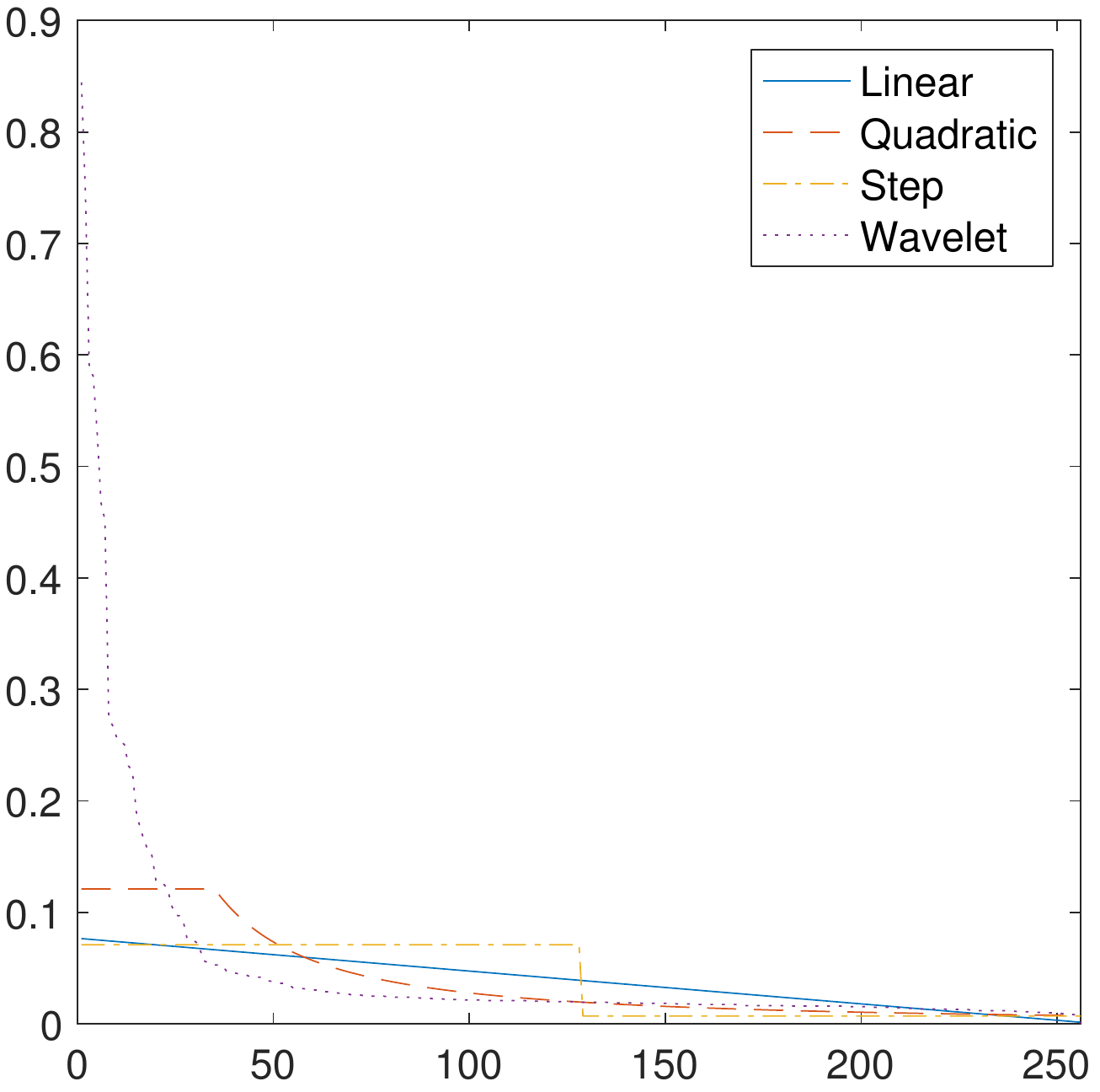}
  \caption{}
\end{subfigure}
\caption{(a) Expectations of the Bernoulli random variables employed in our distribution models. (b) The same plot with the relative frequency of the wavelet coefficients from Figure \ref{wav_coeff} for comparison.}
\label{dist}
\end{figure}
\subsection{Numerical results}
To test the performance of our sensing dictionaries and preconditioning, we conduct the following experiment. We build $2$ dictionaries, each with $256$ atoms of dimension $128$. The columns of the first dictionary are drawn uniformly at random from the unit sphere and the second dictionary is a uniformly subsampled Discrete Cosine Basis with subsequent normalisation. We consider three different distribution models: quadratic, linear and step - see Figure~\ref{dist}. For each distribution model and each support size between $1$ and $80$ we construct $1000$ signals by choosing the support according to the rejective sampling model specified in Section \ref{sec:notations}. The sparse coefficients of $x$ have absolute value one with random signs, i.e. $x_i = \pm 1$ with equal probability. We then compare how often thresholding, OMP and BP can recover the full support when using the original dictionary, the uniform average case sensing dictionary ($P = \mathbb{I}\frac{S}{K}$), and the distribution specific average case sensing dictionary (or the preconditioned matrix for BP). The results for thresholding and OMP are displayed in Table \ref{tbl:recovery_rates_th} and Table \ref{tbl:recovery_rates_omp} respectively. Table \ref{tbl:recovery_rates_bp} shows how the preconditioning changes the recovery rates for BP. As can be seen, incorporating prior knowledge about the distribution of supports into the algorithms improves performance quite significantly for all $3$ algorithms.

\newcolumntype{M}[1]{>{\centering\arraybackslash}m{#1}}

\begin{table}[ht]
    \centering
    \begin{tabular}{c M{43mm}M{43mm}M{43mm}}
       \toprule
         & Linear & Quadratic & Step \\
        \midrule
        \rotatebox[origin=c]{90}{Gaussian} & 
        \includegraphics[width=0.8\linewidth]{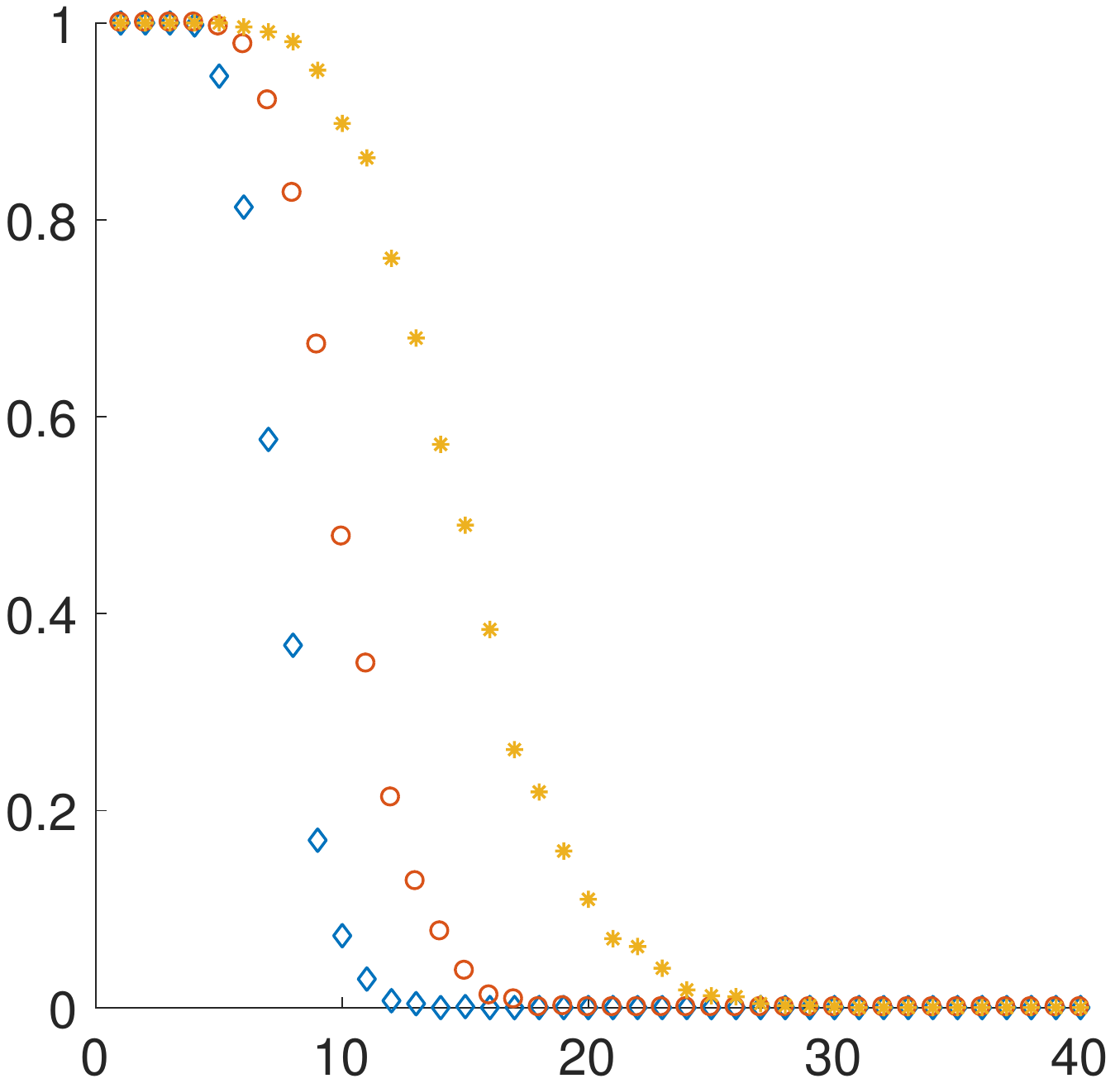}
        & 
        \includegraphics[width=0.8\linewidth]{simulations/Rand_precond_d128K_256_N1000_6_11_2020_th_distribution_1.pdf}
        & 
        \includegraphics[width=0.8\linewidth]{simulations/Rand_precond_d128K_256_N1000_6_11_2020_th_distribution_1.pdf}
        \\
        \rotatebox[origin=c]{90}{Subs. DCT } & 
        \includegraphics[width=0.8\linewidth]{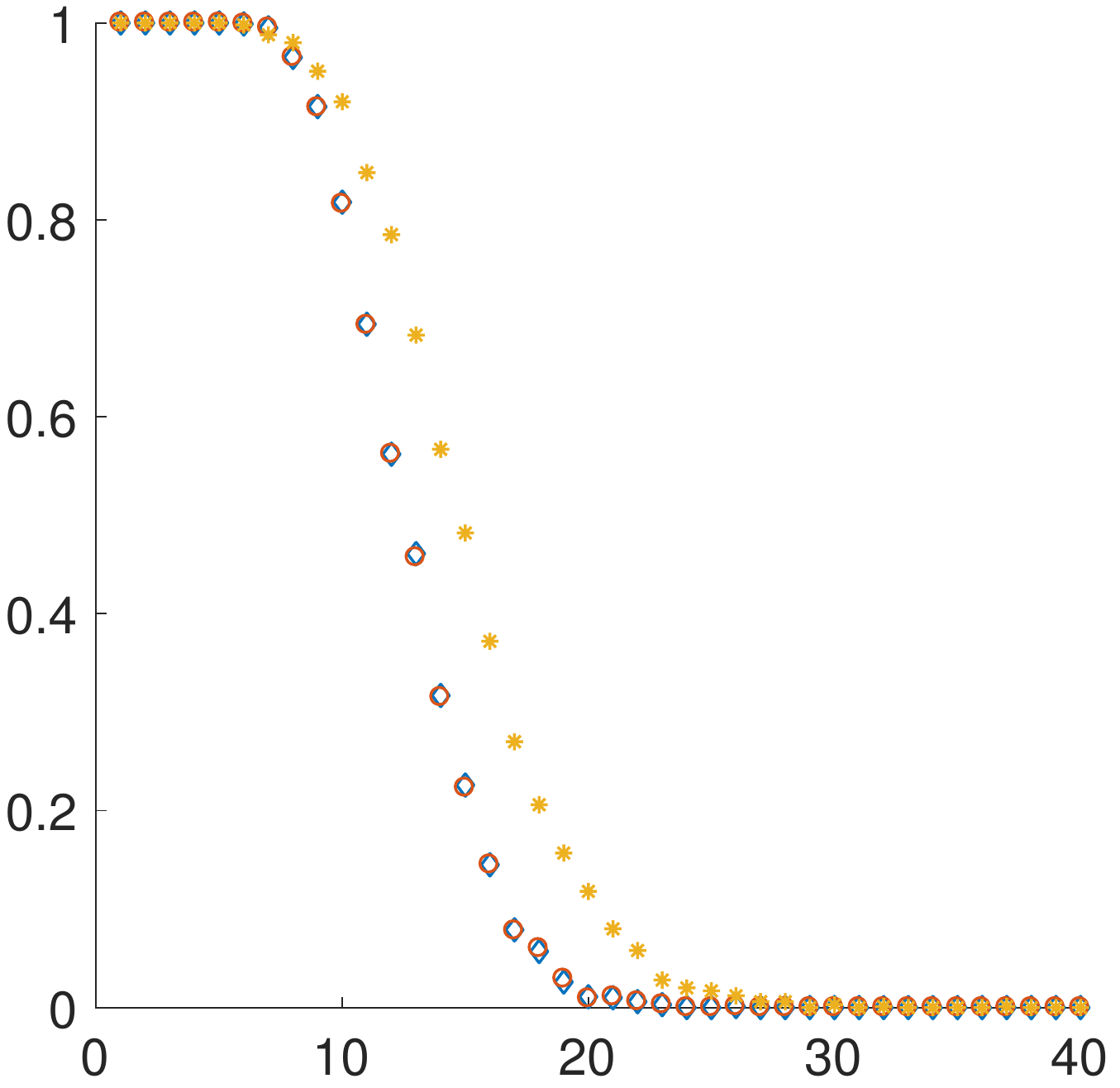}
        & 
        \includegraphics[width=0.8\linewidth]{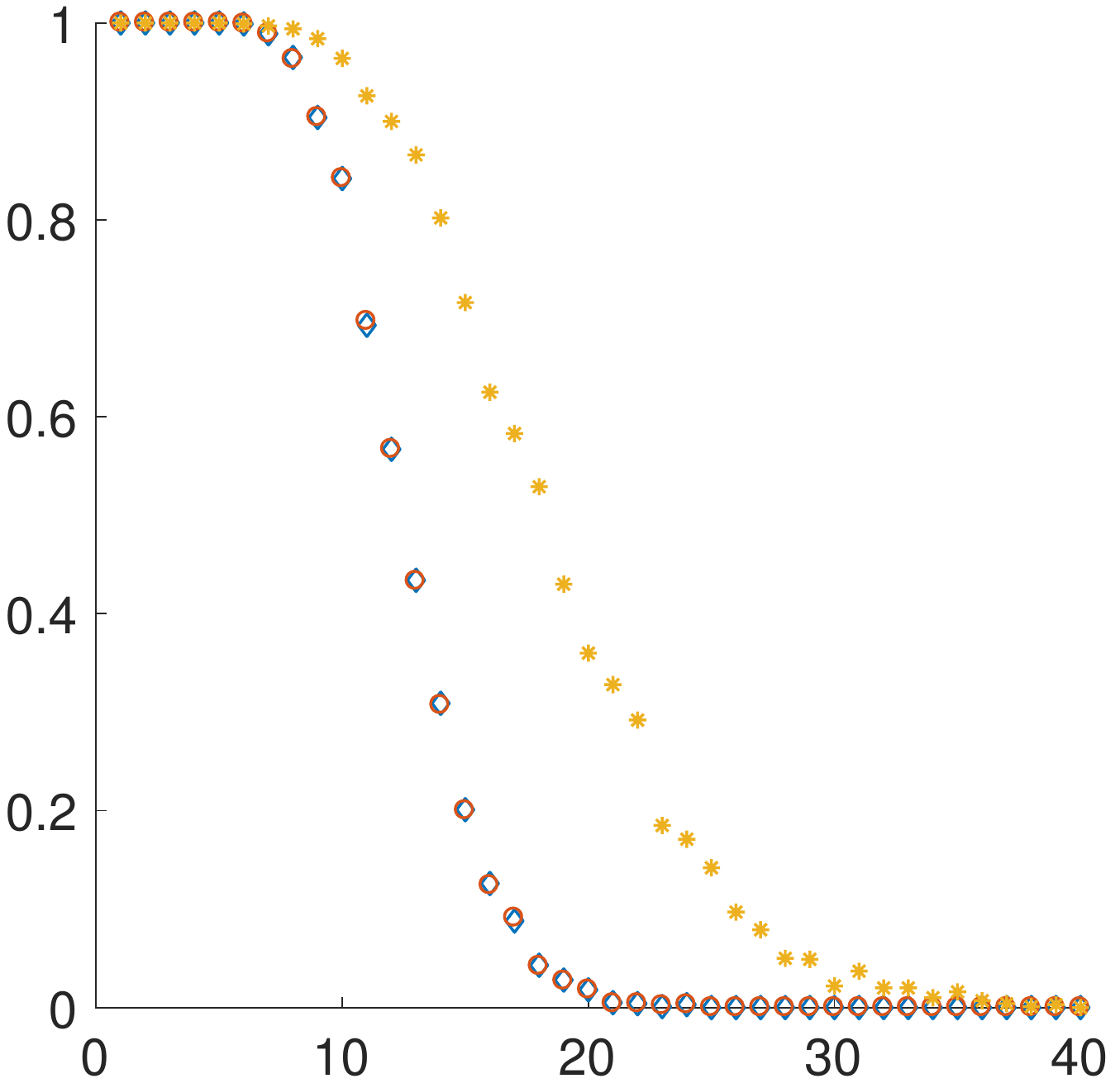}
        & 
        \includegraphics[width=0.8\linewidth]{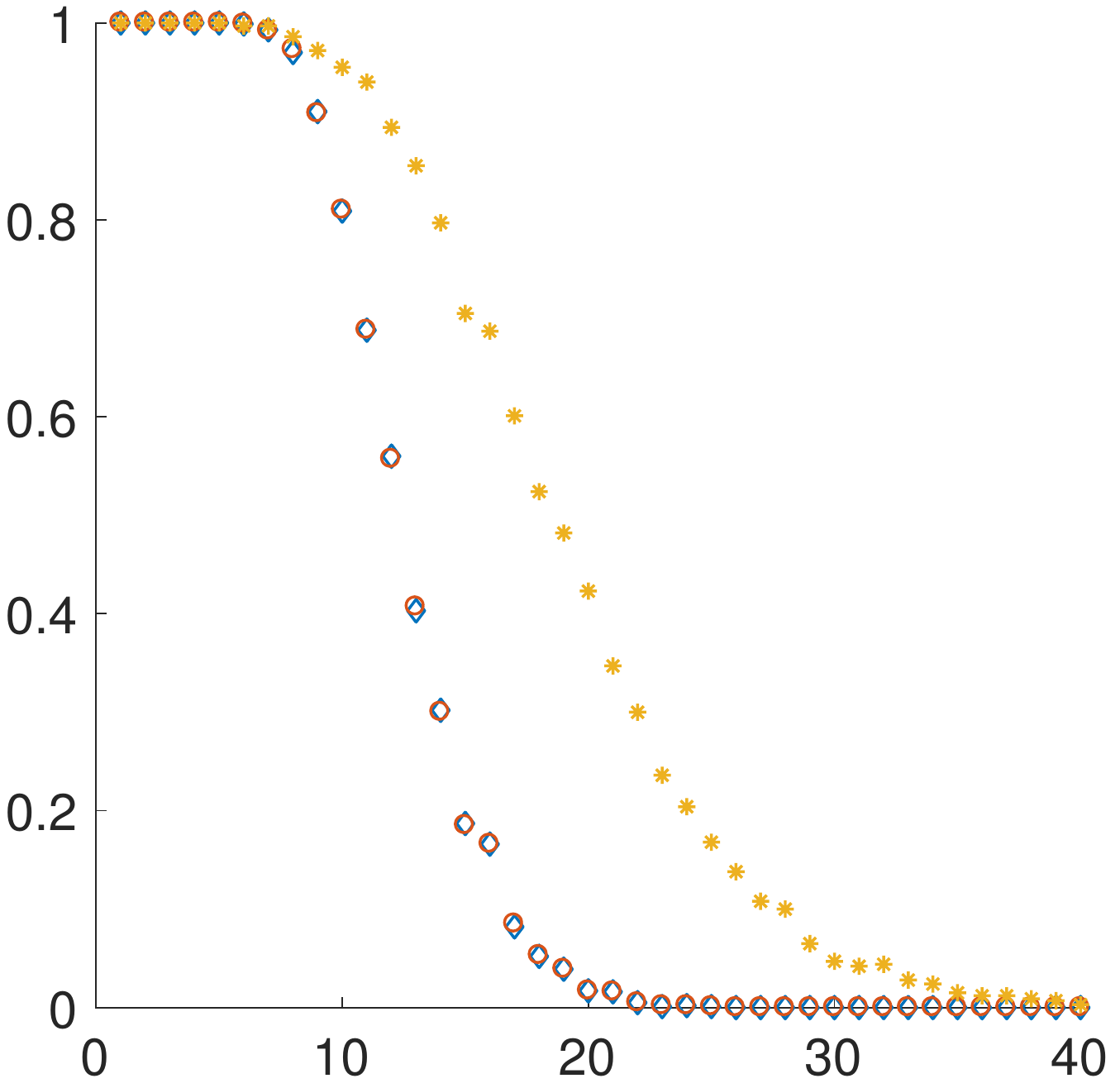} \\
        \bottomrule
    \end{tabular}
    \caption{The recovery rates for thresholding with different sensing dictionaries are plotted on the y-axis and the size of sparse supports on the x-axis. Blue corresponds to no sensing dictionary, red to the uniform average case sensing dictionary and orange to the distribution specific average case sensing dictionary.}
    \label{tbl:recovery_rates_th}
\end{table}

\section{Discussion}
In this paper we have derived concentration inequalities for norms of random subdictionaries with non-uniformly distributed supports. This has allowed us to derive sufficient conditions for sparse approximation algorithms to recover the correct support given that the support of coefficients follows a rejective sampling or Poisson sampling model. We have shown that recovery of signals depends on the structure of the cross-Gram matrix and the distribution of supports, proving that more frequently used atoms should be more incoherent than less frequently used ones. The generalisation from uniformly to non-uniformly distributed supports gives valuable insight into how, in a compressed sensing setup, measurement matrices should be chosen or constructed. For both thresholding and OMP it was shown that using sensing dictionaries that take the distribution of supports into account improves performance. Using precondition to extend this argument to BP, it was also shown that prior knowledge about the distribution leads to improved performance for BP as well.\\
Our next goal is to use these results to prove convergence of dictionary learning algorithms for signals where the atoms of the generating dictionary are not equally used - as seems to happen in practice. Not only should this show that the more frequently used atoms converge faster, but it should also give insights how to best estimate the size of the generating dictionary.

\acks{This work was supported by the Austrian Science Fund (FWF) under Grant no.~Y760. Finally, many thanks go to Elisabeth Schneckenreiter for proof-reading the manuscript.}

\begin{table}[ht]
    \centering
    \begin{tabular}{c M{43mm}M{43mm}M{43mm}}
       \toprule
         & Linear & Quadratic & Step \\
        \midrule
        \rotatebox[origin=c]{90}{Gaussian} & 
        \includegraphics[width=0.8\linewidth]{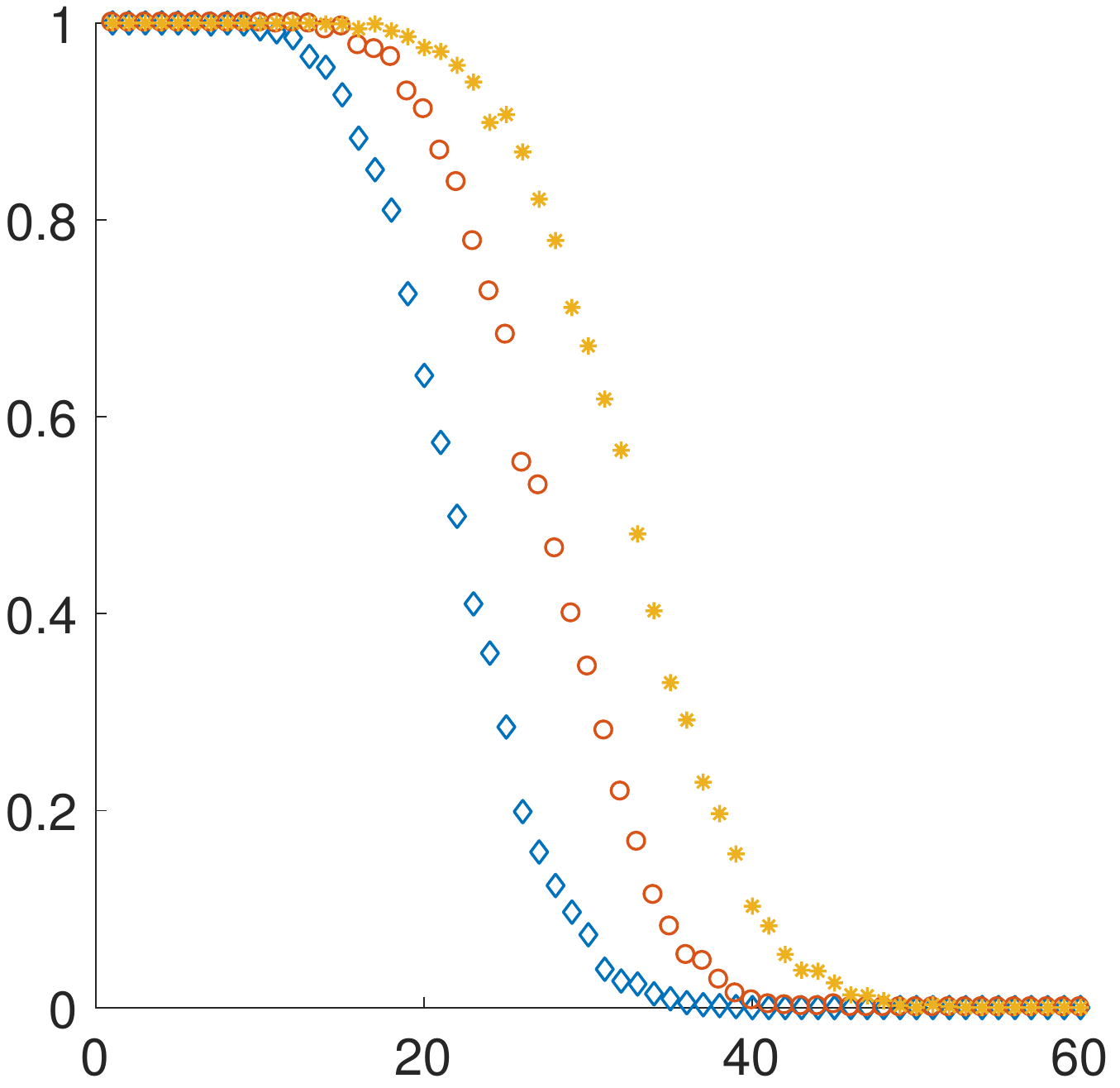}
        & 
        \includegraphics[width=0.8\linewidth]{simulations/Rand_precond_d128K_256_N1000_6_11_2020_omp_distribution_1.pdf}
        & 
        \includegraphics[width=0.8\linewidth]{simulations/Rand_precond_d128K_256_N1000_6_11_2020_omp_distribution_1.pdf}
        \\
        \rotatebox[origin=c]{90}{Subs. DCT } & 
        \includegraphics[width=0.8\linewidth]{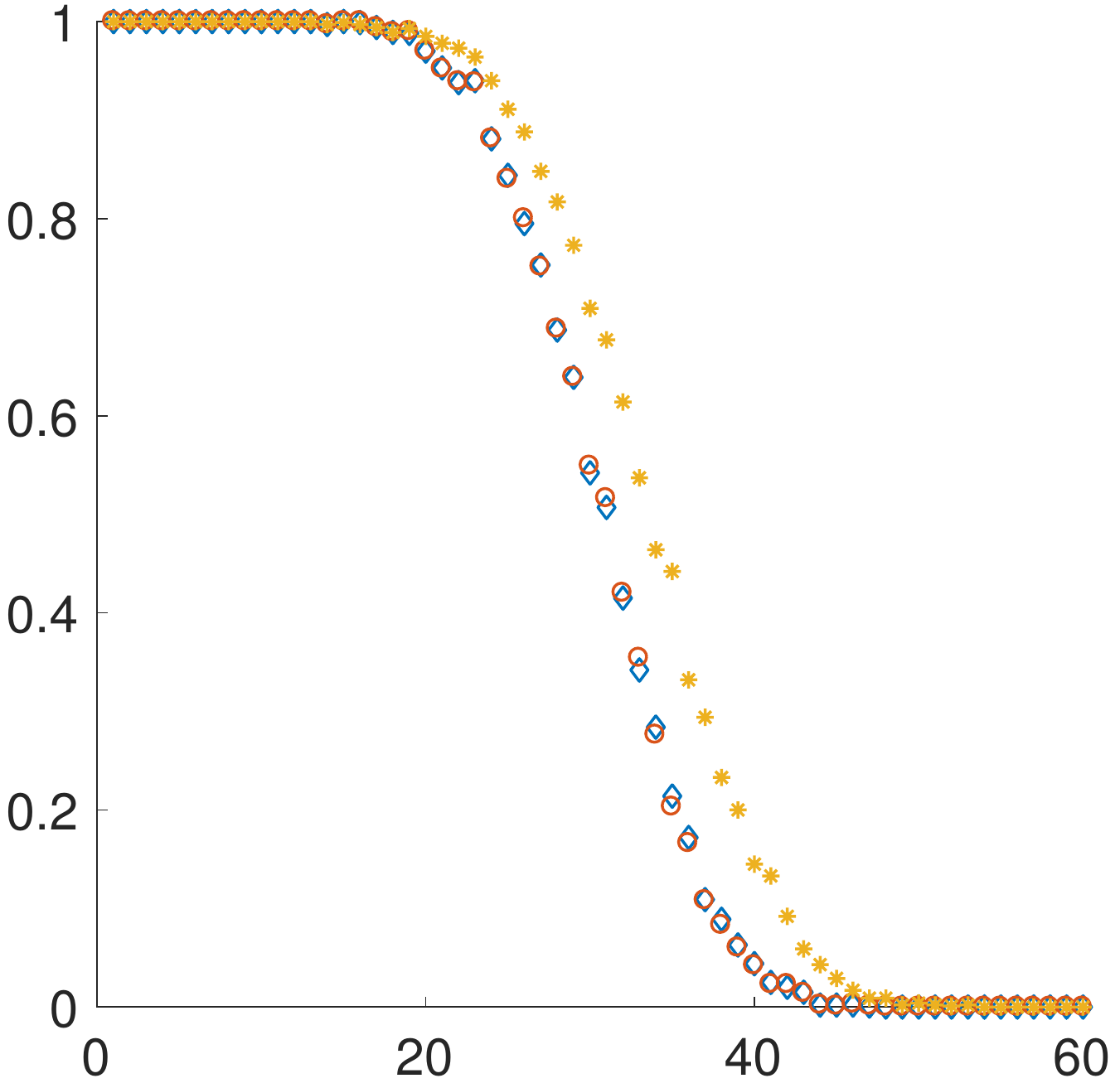}
        & 
        \includegraphics[width=0.8\linewidth]{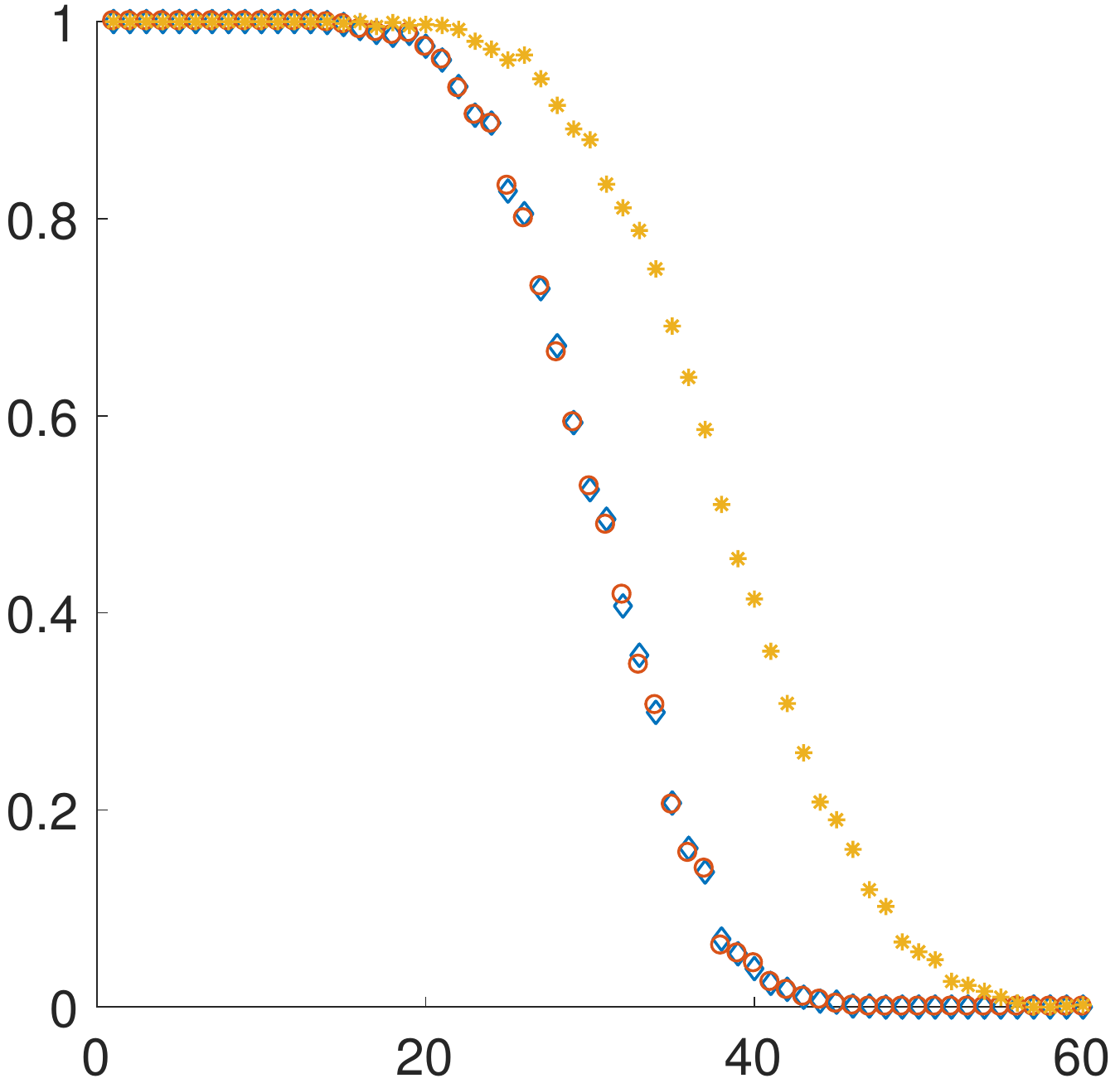}
        & 
        \includegraphics[width=0.8\linewidth]{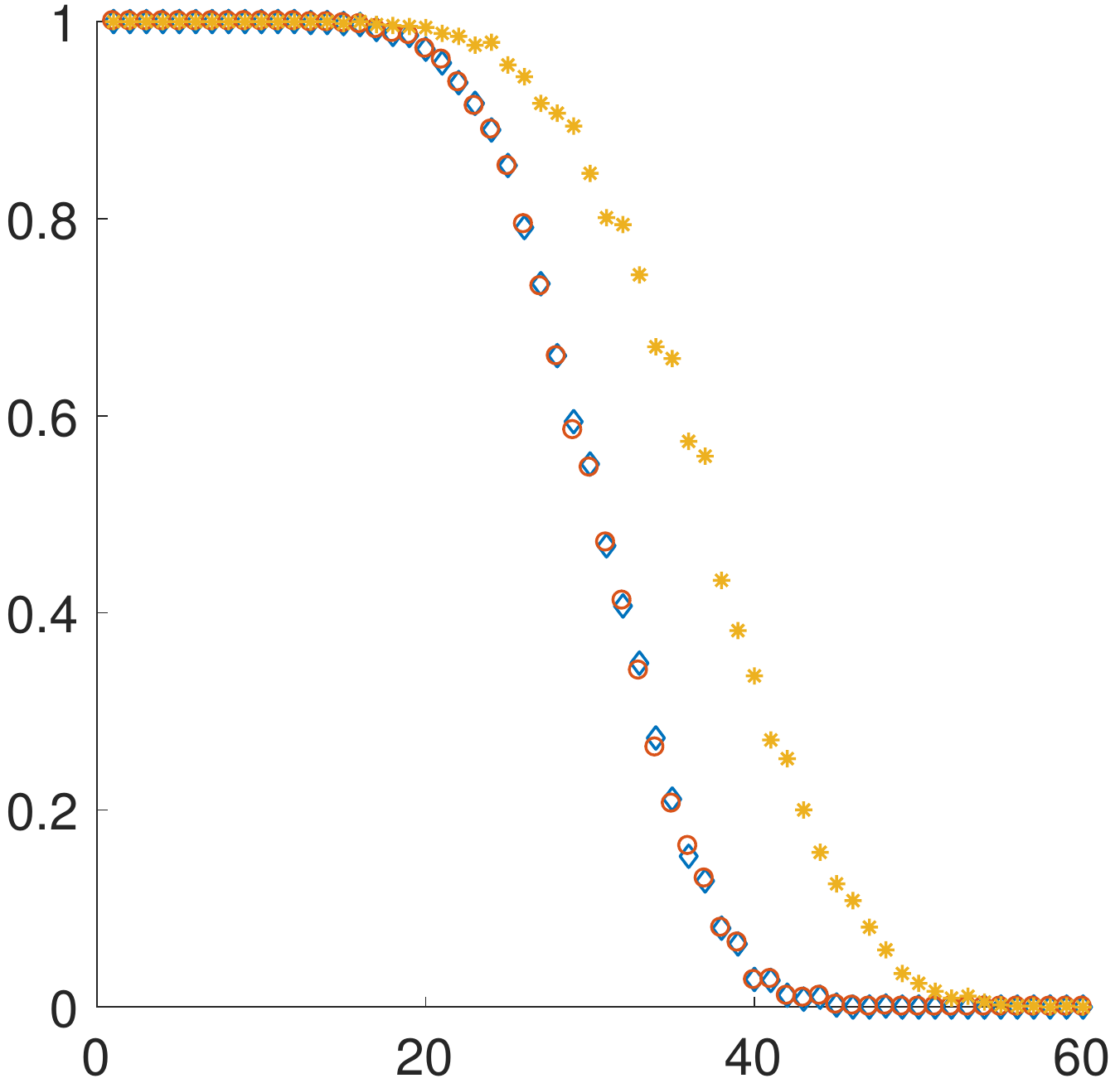} \\
        \bottomrule
    \end{tabular}
    \caption{The recovery rates for OMP with different sensing dictionaries are plotted on the y-axis and the size of sparse supports on the x-axis. Blue corresponds to no sensing dictionary, red to the uniform average case sensing dictionary and orange to the distribution specific average case sensing dictionary.}
    \label{tbl:recovery_rates_omp}
\end{table}

\begin{table}[ht]
    \centering
    \begin{tabular}{c M{43mm}M{43mm}M{43mm}}
       \toprule
         & Linear & Quadratic & Step \\
        \midrule
        \rotatebox[origin=c]{90}{Gaussian} & 
        \includegraphics[width=0.8\linewidth]{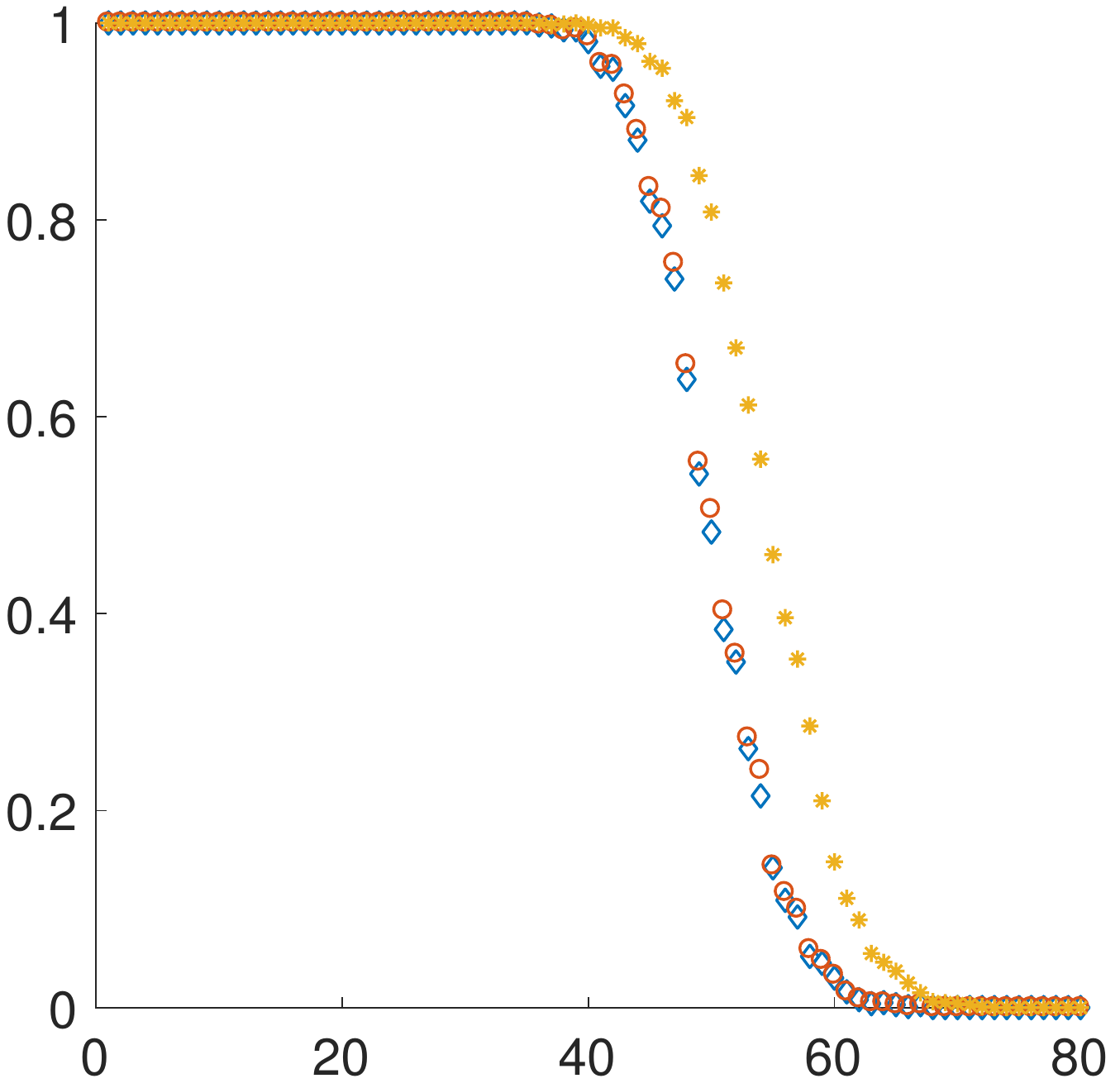}
        & 
        \includegraphics[width=0.8\linewidth]{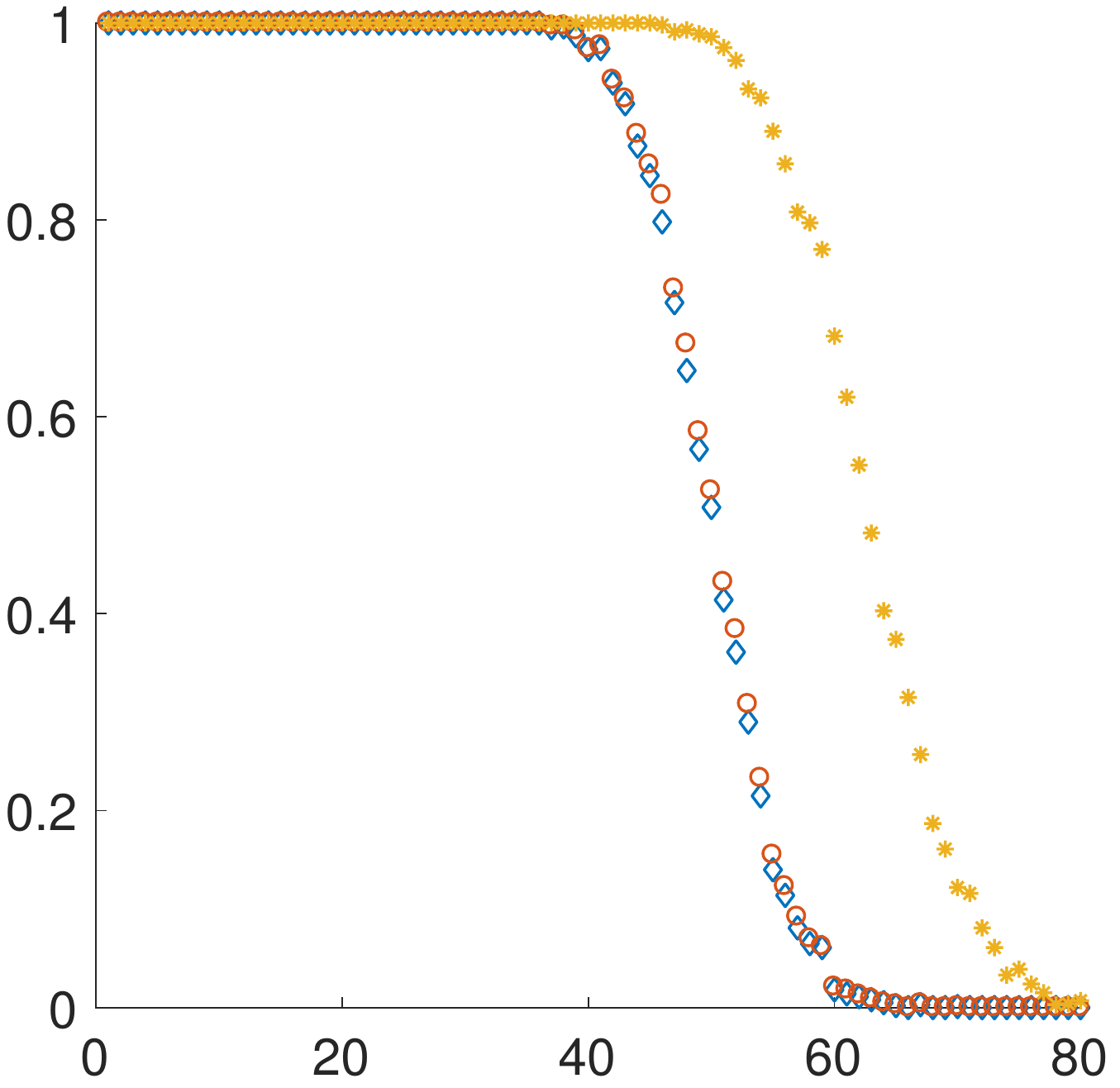}
        & 
        \includegraphics[width=0.8\linewidth]{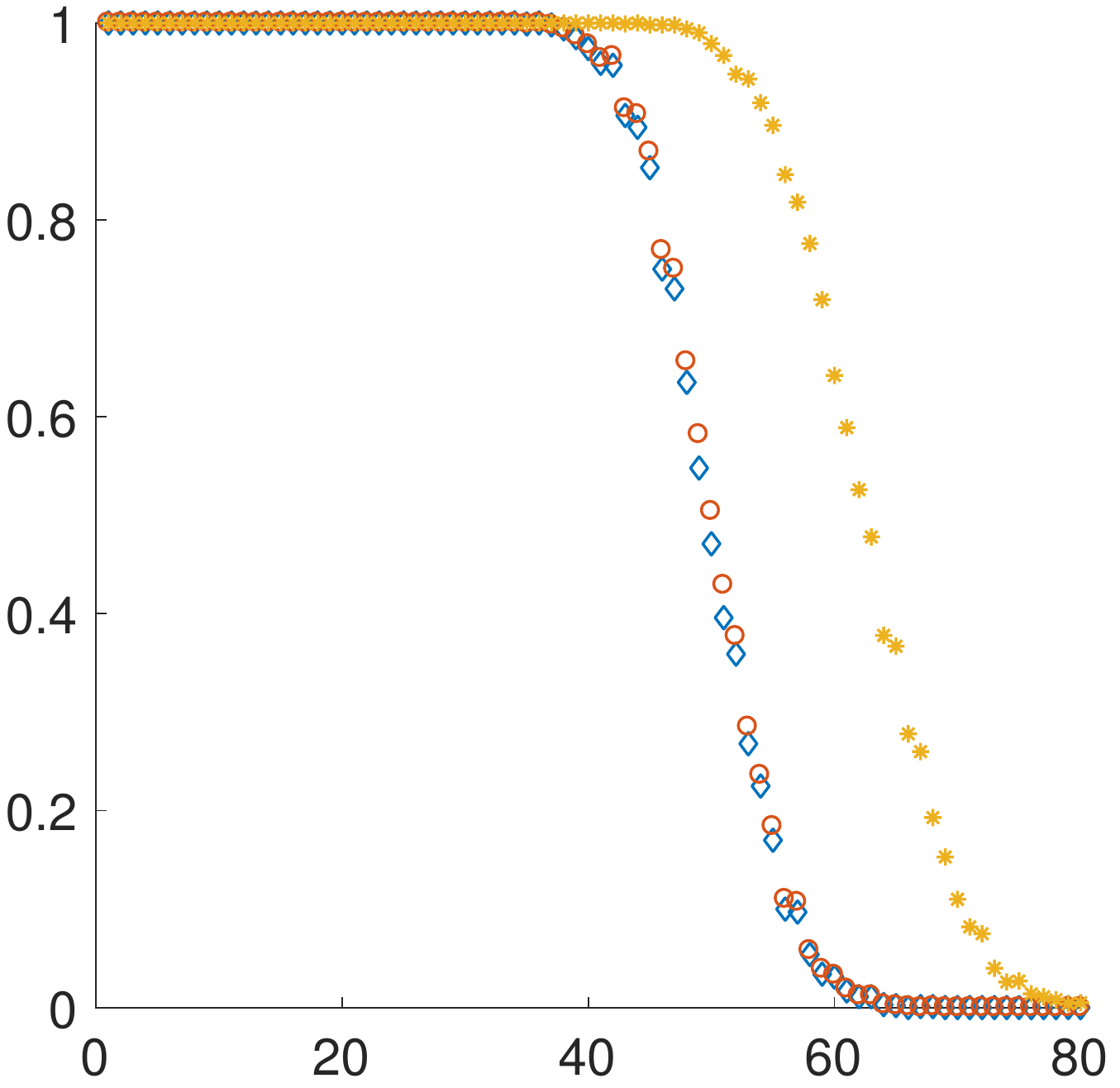}
        \\
        \rotatebox[origin=c]{90}{Subs. DCT } & 
        \includegraphics[width=0.8\linewidth]{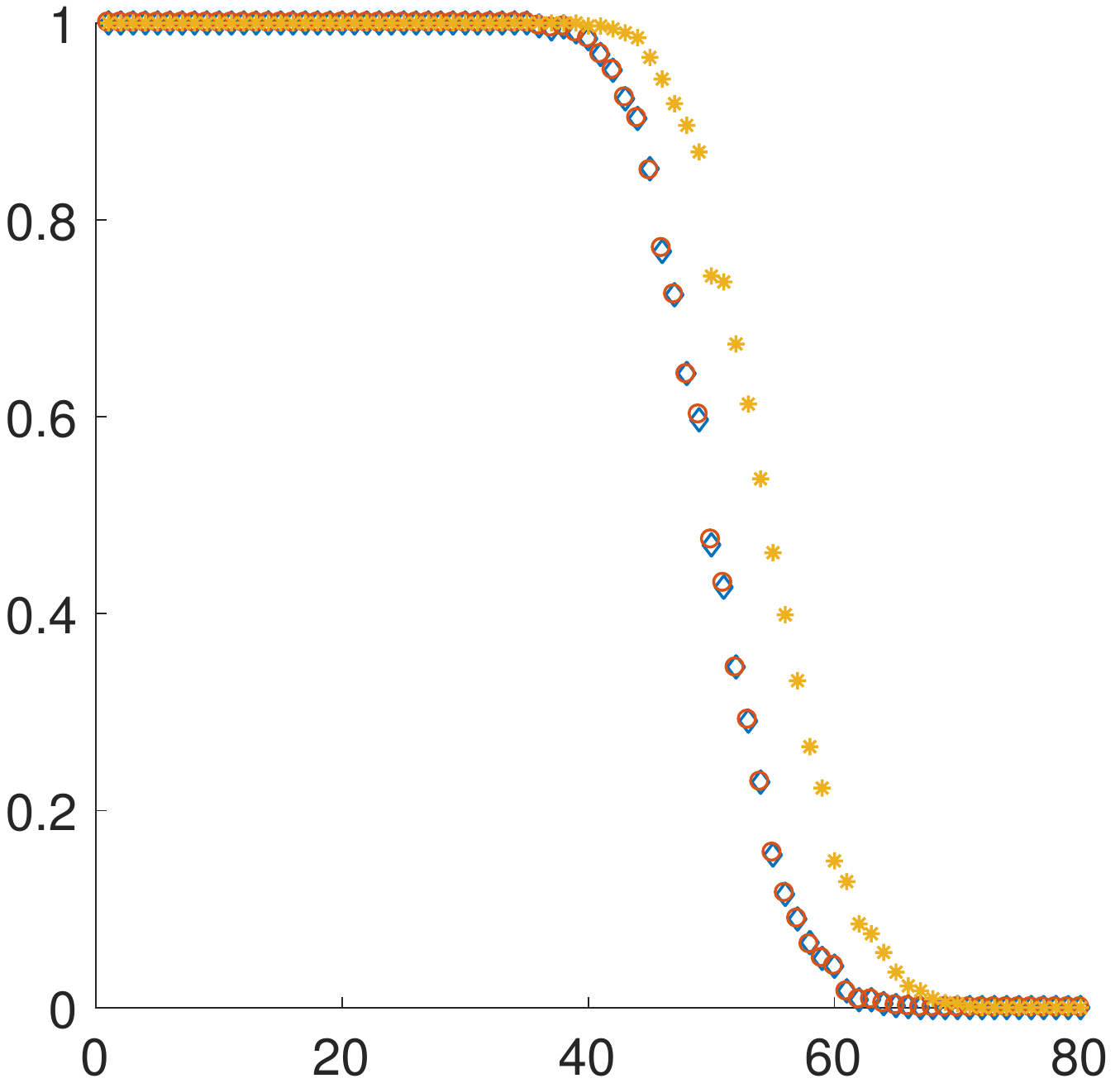}
        & 
        \includegraphics[width=0.8\linewidth]{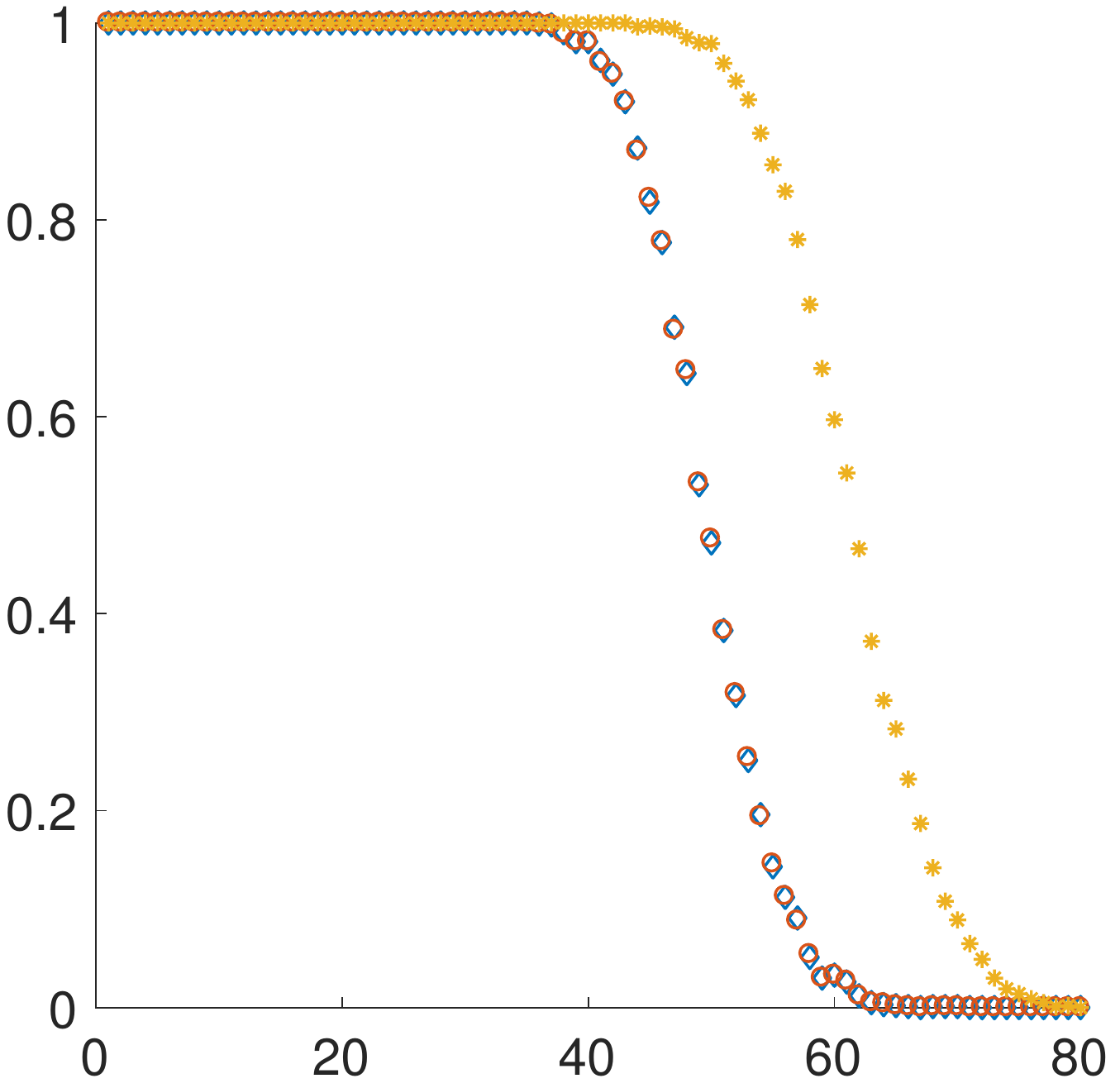}
        & 
        \includegraphics[width=0.8\linewidth]{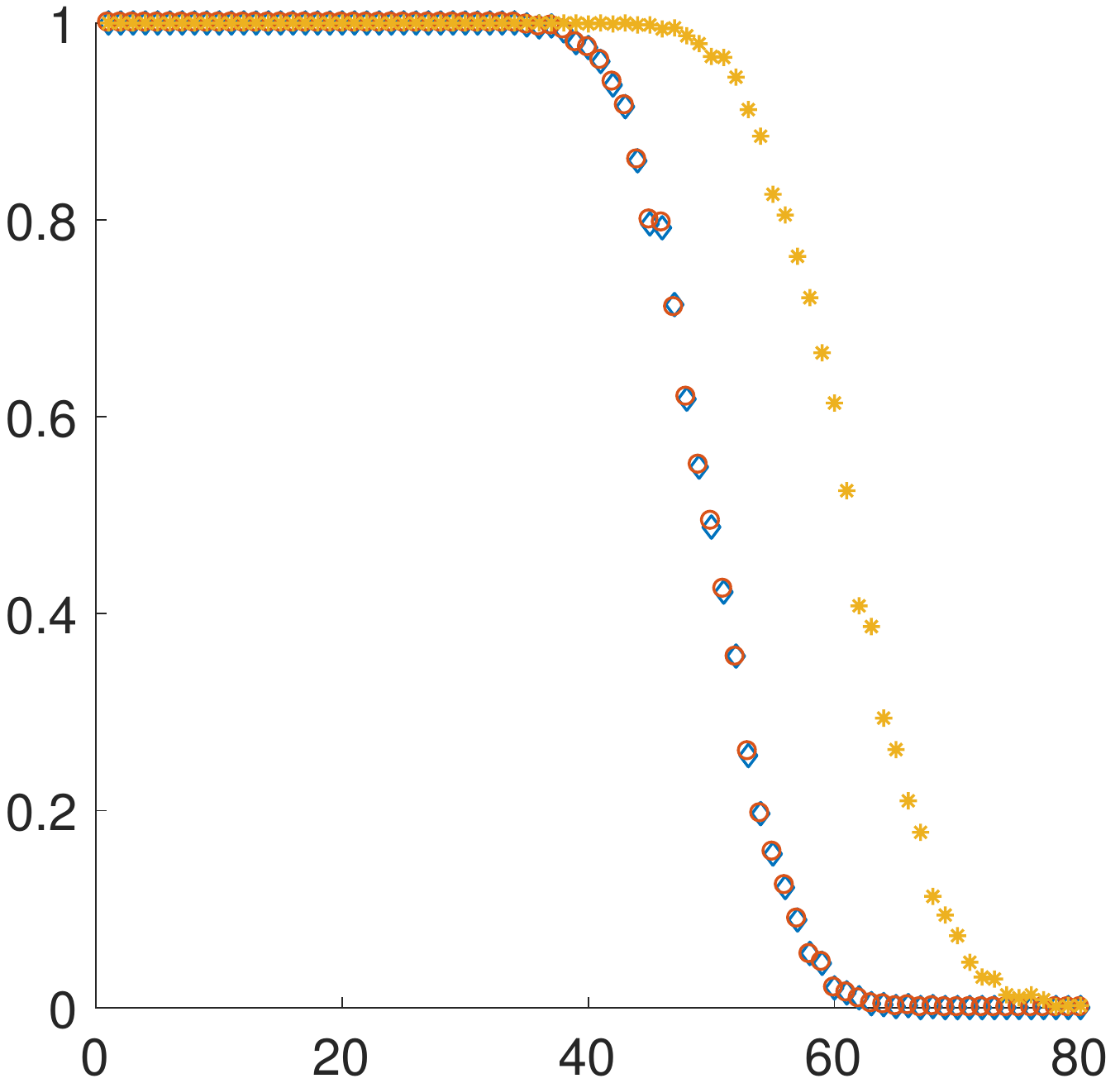} \\
        \bottomrule
    \end{tabular}
    \caption{The recovery rates for BP with different preconditioning strategies are plotted on the y-axis and the size of sparse supports on the x-axis. Blue corresponds to the original $\ell_1$ minimisation problem, red corresponds to preconditioning in the uniform case and orange corresponds to preconditioning with the correct weights.}
    \label{tbl:recovery_rates_bp}
\end{table}

\newpage
\appendix
\section{Proof of Theorem \ref{them:1}}\label{poof_them1}
The proof follows the one that appeared in Chr\'etien and Darses~\cite{chda12} with some minor changes to account for the non-uniformly distributed supports and the extension to non-symmetric matrices. We start with an argument that lets us decouple the random variables selecting the rows and columns. This is crucial for the application of concentration inequalities for sums of independent random matrices later in the proof.
\begin{proposition}
Let $H \in \R^{d \times K}$ be some matrix. Assume $I \subseteq \mathbb{K}$ is chosen according to the Poisson sampling model with probabilities $p_1, \dots , p_K$ such that $\sum_{i = 1}^K p_i = S$. Further let $W$ denote the corresponding weight matrix. Then, for all $r \geq 0$
\begin{equation*}
    \P\left ( \| \proj H \proj \| \geq r  \right) \leq 36 \; \P \left ( \| \proj H \proj' \| \geq r/2  \right),
\end{equation*}
where $\proj'$ is an independent copy of $\proj$.
\end{proposition}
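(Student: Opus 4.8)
The plan is to follow the classical decoupling argument for quadratic forms in Bernoulli selectors, adapting it to the non-uniform weights $p_1,\dots,p_K$. The key observation is that $\proj H \proj = \sum_{i,j} \delta_i \delta_j \vec{H}_{ij}$, and since $H$ has zero diagonal (or at least we only care about off-diagonal terms here—note the statement is for general $H$, but the application is to hollow matrices), the double sum runs over pairs $i \ne j$. To compare this with the decoupled version $\proj H \proj' = \sum_{i,j} \delta_i \delta_j' \vec{H}_{ij}$, I would introduce an auxiliary sequence $\eta_i$ of independent Bernoulli $1/2$ random variables, independent of the $\delta_i$, and split the index set according to the value of $\eta$. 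The point is that $\sum_{i \ne j} \eta_i (1-\eta_j) \delta_i \delta_j \vec{H}_{ij}$ is, conditionally on $\eta$, a decoupled form in the two independent groups $\{\delta_i : \eta_i = 1\}$ and $\{\delta_j : \eta_j = 0\}$.

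First I would compute $\mathbb{E}_\eta[\eta_i(1-\eta_j)] = 1/4$ for $i \ne j$, so that $\mathbb{E}_\eta\big[\sum_{i\ne j}\eta_i(1-\eta_j)\delta_i\delta_j \vec{H}_{ij}\big] = \tfrac14 \proj H \proj$ (using the zero diagonal). By convexity of the operator norm and Jensen's inequality applied to the conditional expectation over $\eta$, there must exist a fixed partition $\mathbb{K} = T \sqcup T^c$ such that $\big\| \sum_{i \in T, j \in T^c} \delta_i \delta_j \vec{H}_{ij}\big\| \geq \tfrac14 \|\proj H \proj\|$ holds with at least the probability we want—more precisely, one runs the standard argument: $\P(\|\proj H \proj\| \ge r) \le \P(4\|\sum_{i\in T, j \in T^c}\delta_i\delta_j\vec H_{ij}\| \ge r)$ after averaging over $\eta$ and selecting a good $T$ via an averaging/pigeonhole step. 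Then I would enlarge the restricted sum over $T \times T^c$ back to a full decoupled sum $\proj H \proj'$ by noting that restricting the independent copy $\proj'$ to coordinates in $T^c$ only decreases the distribution (monotonicity, since zeroing out selectors can only help), and symmetrically for $\proj$ on $T$; each such step costs at most a constant factor in probability, and the factor $1/2$ inside the norm on the right-hand side absorbs the discrepancy between the constant $4$ and the bookkeeping. Tracking the constants carefully should yield the claimed $36$ and the $r/2$.

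The main obstacle I anticipate is the combinatorial bookkeeping in the decoupling step—specifically, justifying cleanly that one can pass from "the expectation over $\eta$ of the restricted norm dominates $\tfrac14\|\proj H\proj\|$" to "there is a deterministic set $T$ for which the restricted norm dominates with comparable probability," and then from the $T$-restricted bilinear form to the fully decoupled $\proj H \proj'$ without losing more than the allotted constant. This is exactly the point where Chrétien–Darses's symmetric argument needs care in the non-symmetric setting, since the roles of rows and columns are no longer interchangeable; I would handle it by treating the row-selection and column-selection restrictions as two separate monotonicity reductions, each contributing a factor, rather than exploiting symmetry. The weights $p_i$ themselves play no active role in this proposition beyond defining the common law of $\proj$ and $\proj'$—the decoupling is purely about the independence structure—so the non-uniformity causes no additional difficulty here; it is deferred to the Poissonisation lemma and the Chernoff bounds.
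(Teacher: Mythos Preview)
Your overall architecture matches the paper's: introduce an auxiliary randomisation to split row and column selectors, freeze a good realisation to obtain a deterministic partition $T$, decouple the selectors on $T$ and $T^c$ (replacing $\delta_j$, $j\in T^c$, by an independent copy), and finally re-embed the $T\times T^c$ block into the full $\proj H\proj'$. The last two steps are unproblematic, and you handle them correctly. The gap is precisely where you place your ``main obstacle,'' and it is not bookkeeping. Jensen gives, for each fixed $\delta$, that $\E_\eta\big\|\sum_{i\in T_\eta,\,j\in T_\eta^c}\delta_i\delta_j\vec H_{ij}\big\|\geq \tfrac14\|\proj H\proj\|$; hence for each $\delta$ \emph{some} partition $T(\delta)$ achieves the bound. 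But different $\delta$'s may need different partitions, and no amount of ``averaging/pigeonhole'' converts this pointwise expectation inequality into a probability comparison with a single deterministic $T$. What is needed is a uniform-in-$\delta$ lower bound of the form $\P_\eta(\|\cdot\|\geq c\,\|\proj H\proj\|)\geq c'$ for every $\delta$; only then can one average over $\eta$ and select $\bar\eta$ as the argmax.

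The paper supplies exactly this missing estimate, by a different mechanism than you propose. It uses Rademacher $\eta_i\in\{\pm1\}$ and sets $Z=\sum_{i\ne j}(1-\eta_i\eta_j)\delta_i\delta_j\vec H_{ij}=\proj H\proj - Y$, where $Y$ is an order-$2$ Rademacher chaos in $\eta$. A Hahn--Banach step produces a linear functional $\tilde f$ with $\tilde f(\proj H\proj)=\|\proj H\proj\|$ and $\tilde f\leq\|\cdot\|$, reducing $\{\|Z\|\geq\|\proj H\proj\|\}$ to $\{\tilde f(Y)\leq 0\}$; then a Paley--Zygmund--type argument combined with the moment bound $\E\xi^4\leq 9(\E\xi^2)^2$ for homogeneous order-$2$ Rademacher chaos yields $\P_\eta(\|Z\|\geq\|\proj H\proj\|)\geq 1/36$ for every $\delta$, whence the constant $36$. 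The factor $r/2$ then comes from $Z(\bar\eta,\delta)=2\sum_{j\in T,k\in T^c}\delta_j\delta_k(\vec H_{jk}+\vec H_{kj})$. Your Bernoulli-$1/2$ route can be salvaged, but it needs one more observation you do not make: the $T\times T^c$ block is $P_T\,\proj H\proj\,P_{T^c}$ and hence has norm at most $\|\proj H\proj\|$. Boundedness together with $\E_\eta\|\cdot\|\geq\tfrac14\|\proj H\proj\|$ gives, by reverse Markov, $\P_\eta(\|\cdot\|\geq\tfrac18\|\proj H\proj\|)\geq\tfrac18$ for each $\delta$; this repairs the gap but produces constants of the form $(8,\,r/8)$ rather than the paper's $(36,\,r/2)$.
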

\begin{proof}
Let $\eta_i$ for $1 \leq i \leq K$ be a series of i.i.d. Rademacher random variables. We follow the approach of Chr\'etien/Darses~\cite{chda12} and Tropp~\cite{tropp08} who refer to Bourgain/Tzafriri~\cite{botz87} and de la Pe\~{n}a/Gin\'{e}~\cite{DecouplingBook}. We define
\begin{equation*}
    Z = Z(\eta,\delta) : = \sum_{i \neq j} (1 - \eta_i \eta_j)\delta_i \delta_j \Hij .
\end{equation*}
Setting $Y = \sum_{i \neq j} \delta_i \delta_j \Hij \eta_i \eta_j$, we can write
\begin{equation*}
    Z = \proj H \proj - Y.
\end{equation*}
\ifthenelse{\boolean{arxiv}}{
Recall the Hahn-Banach Theorem.
\begin{theorem}[Hahn-Banach]
Let $X$ be a real vector space and p a sublinear functional on X. Let f be a linear functional defined on a subspace $A \subset X$, and satisfying $f(a) \leq p(a)$ for all $a \in A$. Then there exists a linear functional $\Tilde{f}$ on X satisfying
\begin{align*}
    &\Tilde{f}(a) = f(a) \quad \text{for all} \quad a \in A \quad \text{and}\\ 
    &\Tilde{f}(x) \leq p(x) \quad \text{for all} \quad x \in X.
\end{align*}
\end{theorem}
}{}
From now on we work conditional on a choice of $I$ (i.e. we fix our sequence $\delta_i$, therefore the support set $I$ and the entries of $\proj$ are fixed as well). Denote by $A = \left\{\lambda  \proj H \proj \; \middle| \;  \lambda \in \R \right\}$ the subspace generated by $ \proj H \proj $ and define a linear form $f(\lambda  \proj H \proj) = \lambda \| \proj H \proj\| $ on this subspace. By definition we have $f(a) \leq \| a\| = : p(a)$ for all $a \in A$, where the properties of the operator norm imply that $p$ is a sublinear functional. Thus the Hahn-Banach Theorem gives us the existence of a linear functional $\Tilde{f}$ satisfying
\[
\Tilde{f}( \proj H \proj) = f( \proj H \proj) = \| \proj H \proj \|\]
and 
\[
\Tilde{f}(Z) \leq \| Z  \|.
\]
Using the linearity of $\Tilde{f}$ and that $Y$ is symmetric around $0$ we get
\begin{align*}
    \P_{\eta} \left( \| Z \| \geq \| \proj H \proj \|  \right) & = \P_{\eta} \left( \| Z \| \geq \Tilde{f}( \proj H \proj)  \right) \\ & \geq \P_{\eta} \left( \Tilde{f}(Z) \geq \Tilde{f}( \proj H \proj) \right) \\ & = \P_{\eta} \left( \Tilde{f}(-Y) + \Tilde{f}( \proj H \proj) \geq \Tilde{f}( \proj H \proj)  \right) = \P_{\eta} \left( \Tilde{f}(Y) \geq 0  \right),
\end{align*}
where again by linearity of $\Tilde{f}$ we have
\begin{equation*}
    \Tilde{f}(Y) = \sum_{\substack{i \neq j \\ i,j \in I}}\Tilde{f}(\Hij)\eta_i \eta_j  = \sum_{\substack{i>j \\ i,j \in I}}\left[\Tilde{f}(\Hij)+\Tilde{f}(\Hji)\right]\eta_i \eta_j.
\end{equation*}
So we see that $\Tilde{f}(Y)$ is a homogeneous Rademacher chaos of order $2$. \ifthenelse{\boolean{arxiv}}{For ease of notation write $\xi := \Tilde{f}(Y)$. As $\xi$ is a centered real random variable we can write $\E[|\xi|] = 2\E[\xi \I_{\xi>0}]$ and a simple application of H\"olders inequality yields
\begin{equation*}
    \E[|\xi|]^2 = 4\E[\xi \I_{\xi>0}]^2 \leq 4 \P \left( \xi >0\right)\E[\xi^2].
\end{equation*}
Write $\E[ \xi^2] = \E[\xi^{2/3}\xi^{4/3}]$ and apply H\"olders inequality again with $p=\frac{3}{2}$ and $q = 3$ to get
\[
\E[\xi^2] \leq \E [|\xi|]^{\frac{2}{3}} \E[\xi^4]^{\frac{1}{3}}.
\]
Putting the above together we arrive at
\begin{equation*}
    \P(\xi >0 ) \geq \frac{1}{4}\frac{\E[|\xi|]^2}{\E[\xi^2]} \geq \frac{1}{4}\frac{\E[\xi^2]^2}{\E[\xi^{4}]}.
\end{equation*}
Since $\xi$ is a homogeneous Rademacher chaos of order $2$ we can apply Lemma 2.1 of Chr\'etien and Darses~\cite{chda12}, which states
\[
\frac{\E[\xi^2]^2}{\E[\xi^{4}]} \geq \frac{1}{9}.
\]
So}{Following the arguments in Chr\'etien and Darses~\cite{chda12} we get}
\begin{equation*}
    \P_{\eta} \left( \| Z\| \geq \|  \proj H \proj \| \right)  \geq \frac{1}{36}.
\end{equation*}
Multiplying both sides with $\I_{ \{ \|  \proj H \proj \| \geq r \} }$ and taking the expectation w.r.t. to $I$ we get
\begin{equation*}
    \P( \|  \proj H \proj \| \geq r) \leq 36 \;  \P( \| Z\| \geq r).
\end{equation*}
Now by the same argument as in Tropp~\cite{tropp08}, Proposition 2.1 there exists a $\bar\eta \in \{-1,1 \}^K$ s.t.
\begin{equation*}
        \P \left( \| Z \| \geq r  \right) = \E \left[ \E \left( \I_{\{ \|Z(\eta,\delta) \| \geq r\}} \; \middle| \;  \eta \right)\right]  \leq \E \left(\I_{\{ \| Z(\bar\eta,\delta) \| \geq r \}}\right) = \P \left( \| Z(\bar\eta,\delta) \| \geq r\right).
\end{equation*}
Setting $T = \{ i : \bar\eta_i =1\}$, we see by the definition of $Z$
\begin{equation*}
    Z(\bar\eta,\delta) = 2 \sum_{j \in T, k \in T^c}\delta_j \delta_k \Hjk +2 \sum_{j \in T^c, k \in T}\delta_j \delta_k \Hjk= 2 \sum_{j \in T, k \in T^c}\delta_j \delta_k (\Hjk+ \Hkj).
\end{equation*}
Now we can do the decoupling. As $\delta_i$ for $i \in T$ are independent from $\delta_j$ for $j \in T^c$ we can replace $\delta_j$ for $j \in T^c$ with $\delta'$ which is an independent copy of $\delta$. Thus
\begin{equation*}
    \P\left( \left\| Z \right\| \geq r\right) \leq \P\left( \| \sum_{j \in T, k \in T^c} \delta_j \delta'_k (\Hjk+\Hkj) \| \geq r/2\right),
\end{equation*}
Note that (after reordering) this matrix is of the form $\begin{pmatrix}
    0 & A \\
    B & 0 
    \end{pmatrix}$, where $A$ corresponds to $ \sum_{j \in T, k \in T^c} \delta_j \delta'_k \Hjk$ and $B$ to $\sum_{j \in T, k \in T^c} \delta_j \delta'_k \Hkj$. The operator norm of this reordered matrix satisfies 
\begin{equation*}
     \left \| \begin{pmatrix}
    0 & A \\
    B & 0 
    \end{pmatrix}\right\|^2 =   \left \| \begin{pmatrix}
    B\transp B & 0 \\
    0 & A\transp A 
    \end{pmatrix}\right\| = \max\{ \|A \|^2, \| B\|^2 \}.
\end{equation*}
As the spectral norm of a submatrix is always less than or equal to the spectral norm of the whole matrix we get by reintroducing the missing entries
\begin{equation*}
    \P( \| Z\| \geq r) \leq \P( \| \proj H \proj' \| \geq r/2).
\end{equation*}
Putting everything together yields the desired result.
\end{proof}
Now we are in a position to apply concentration inequalities for sums of independent random matrices. For that recall the Matrix Chernoff inequality, which can be found in~\cite{tr12}. 
\begin{theorem}[Matrix Chernoff inequality~\cite{tr12}]
Let $X_1,...,X_K$ be independent, symmetric and positive semi-definite random matrices taking values in $\R^{d \times d}$. Now let $B,m > 0$ and assume that for all $1 \leq k \leq K$
\begin{align*}
        \| X_k \| \leq B  \quad \text{and} \quad \| \sum_{k=1}^K\E X_k \| \leq m.
    \end{align*}
    Then, for all $t>0$
    \begin{align*}
        \P \left(\| \sum_{k=1}^K X_k \| \geq t\right) \leq d \left( \frac{em}{t}\right)^{t/B}.
    \end{align*}
\end{theorem}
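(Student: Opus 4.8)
Since the Matrix Chernoff inequality is invoked here only as a cited fact from~\cite{tr12}, I will just sketch the standard proof one would reproduce: the matrix Laplace transform method of Ahlswede--Winter, in the sharpened form due to Tropp. Because the $X_k$ are positive semidefinite, $\|\sum_k X_k\| = \lambda_{\max}(\sum_k X_k)$, so it suffices to bound the upper tail of the largest eigenvalue. The first step is a matrix Markov bound: for every $\theta>0$, monotonicity of $s\mapsto e^{\theta s}$ together with $e^{\theta\lambda_{\max}(M)}\le\tr e^{\theta M}$ (the left side is one of the positive eigenvalues of $e^{\theta M}$) gives
\[
\P\!\left(\lambda_{\max}\!\Big(\sum_k X_k\Big)\ge t\right)\;\le\; e^{-\theta t}\,\E\,\tr\exp\!\Big(\theta\sum_k X_k\Big).
\]

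The crux is the second step: one cannot factor $\E\exp(\theta\sum_k X_k)$ as $\prod_k\E\exp(\theta X_k)$ because the $X_k$ need not commute. The clean way around this is Lieb's concavity theorem --- the map $A\mapsto\tr\exp(L+\log A)$ is concave on positive definite matrices for fixed symmetric $L$ --- which, applied with Jensen's inequality to one summand at a time, yields
\[
\E\,\tr\exp\!\Big(\sum_k\theta X_k\Big)\;\le\;\tr\exp\!\Big(\sum_k\log\E\,e^{\theta X_k}\Big).
\]
The older Golden--Thompson route also closes the proof, but only with $m$ replaced by the generally larger $\sum_k\|\E X_k\|$; obtaining the stated dependence on $m=\|\sum_k\E X_k\|$ is precisely what Lieb's theorem buys.

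The third step bounds each summand using $0\preceq X_k$ and $\|X_k\|\le B$, i.e. that the spectrum of $X_k$ lies in $[0,B]$. Convexity of $s\mapsto e^{\theta s}$ gives $e^{\theta s}\le 1+\frac{e^{\theta B}-1}{B}s$ on $[0,B]$, so the transfer rule gives $e^{\theta X_k}\preceq\I+\frac{e^{\theta B}-1}{B}X_k$; taking expectations and using operator monotonicity of $\log$ together with $\log(\I+A)\preceq A$ gives $\log\E e^{\theta X_k}\preceq\frac{e^{\theta B}-1}{B}\E X_k$. Summing, $\sum_k\log\E e^{\theta X_k}\preceq\frac{e^{\theta B}-1}{B}m\I$, and since $\tr\exp$ is monotone for the Loewner order,
\[
\tr\exp\!\Big(\sum_k\log\E\,e^{\theta X_k}\Big)\;\le\; d\,\exp\!\Big(\frac{e^{\theta B}-1}{B}\,m\Big).
\]

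Combining the three displays gives $\P(\lambda_{\max}(\sum_k X_k)\ge t)\le d\exp(-\theta t+\frac{e^{\theta B}-1}{B}m)$, and one finishes by optimising the exponent over $\theta>0$: the minimiser is $\theta=\frac1B\log(t/m)$ (admissible once $t>m$; for $t\le m$ the asserted bound already exceeds $1$ and is vacuous), and substituting and discarding a favourable $-m/B$ term yields exactly $d(em/t)^{t/B}$. The only genuine obstacle is the second step --- controlling the trace exponential of a sum of non-commuting matrices --- which is the whole reason the matrix Laplace transform machinery exists; the remaining steps are just the scalar Chernoff computation transported to matrices via the transfer rule, operator monotonicity of the logarithm, and eigenvalue monotonicity of $\tr\exp$.
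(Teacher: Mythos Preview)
The paper does not prove this statement at all; it is quoted verbatim as a black-box tool from \cite{tr12} and then applied three times in the proof of Theorem~\ref{them:1}. You correctly identify this at the outset, and your sketch is a faithful and accurate rendition of Tropp's argument: the matrix Laplace transform bound, the subadditivity step via Lieb's concavity, the scalar-to-matrix transfer on $[0,B]$, and the final optimisation in $\theta$ all check out, including the observation that $\sum_k\E X_k\preceq m\,\I$ (since the summands are PSD) and that the discarded $e^{-m/B}$ factor is what turns the raw optimum into the clean $(em/t)^{t/B}$ form. There is nothing to correct; the only comparison to make is that the paper offers no proof to compare against.
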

Now we are going to derive a bound on $\P \left( \|  \proj H \proj' \| \geq r \right)$ by applying the Matrix Chernoff inequality $3$ times. We first use the randomness of $\proj'$ while holding $\proj$ fixed, then we bound the two resulting terms involving $\proj$. This leads to the following result
\begin{lemma} Let $H \in \R^{d \times K}$ be some matrix. Assume $I,I' \subseteq \mathbb{K}$ - leading to the selector matrices $R, R'$ - are chosen according to the Poisson sampling model with probabilities $p_1, \dots , p_K$ such that $\sum_{i = 1}^K p_i = S$. Further let $W$ denote the corresponding weight matrix. Then, for all $r > 0$
\begin{equation}\label{eq:2}
    \P \left( \| \proj H \proj' \| \geq r \right)\leq  K  \left(e \frac{u^2}{r^2}  \right)^{\frac{r^2}{v^2}} + K \left(e \frac{\|\weights H \weights \|^2}{u^2} \right)^{\frac{u^2}{\| H \weights \|_{\infty,2}^2}} + K \left( e  \frac{\|  W H \|_{2,1}^2}{v^2}  \right) ^{\frac{v^2}{\mu^2}}.
\end{equation}
\end{lemma}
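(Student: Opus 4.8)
The plan is to apply the Matrix Chernoff inequality in three nested stages, peeling off one source of randomness at a time, while conditioning on the previous ones. First I would fix the selector matrix $\proj$ and use the independence of the columns of $\proj'$. Writing $\proj H \proj' = \sum_{k} \delta_k' (\proj H)_k e_k\transp$ where $(\proj H)_k$ is the $k$-th column of $\proj H$, I want to bound $\|\proj H \proj'\|$ in terms of a self-adjoint dilation so that Matrix Chernoff applies. The natural move is to pass to $(\proj H \proj')\transp(\proj H \proj') = \sum_k \delta_k' (\proj H)_k (\proj H)_k\transp$ (a sum of independent PSD rank-one matrices once $\proj$ is fixed), apply the Matrix Chernoff bound with per-summand bound $B = \max_k \|(\proj H)_k\|_2^2 \leq \|\proj H\|_{2,1}^2$ and mean bound $m = \|\sum_k p_k (\proj H)_k (\proj H)_k\transp\| = \|\proj H W^2 H\transp \proj\| = \|\proj H W\|_{2,2}^2$. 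Taking $t = u^2$ this yields a term of the shape $K (e u^2 / ?)^{?}$ controlled by $\|\proj H W\|$ and $\|\proj H\|_{2,1}$ — but I still need to replace these random quantities by deterministic ones, which is where the two remaining applications come in.

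For the second application, I condition on nothing and handle $\|\proj H W\|_{2,2}^2 = \|W H\transp \proj H W\|$ via $W H\transp \proj H W = \sum_k \delta_k (W H\transp)_k (H W)^k$ wait — better: $W H\transp \proj H W = \sum_k \delta_k (W H\transp e_k)(W H\transp e_k)\transp$, again a sum of independent rank-one PSD matrices. The per-summand bound is $\max_k \|W H\transp e_k\|_2^2 = \|W H\transp\|_{2,1}^2 = \|H W\|_{\infty,2}^2$, and the mean is $\|\sum_k p_k (W H\transp e_k)(W H\transp e_k)\transp\| = \|W H\transp W^2 H W\| = \|W H W\|_{2,2}^2$. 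Taking $t = u^2$ gives the middle term $K(e\|W H W\|^2/u^2)^{u^2/\|H W\|_{\infty,2}^2}$. Symmetrically, the third application bounds $\|\proj H\|_{2,1}^2 = \max_k \|\proj H e_k\|_2^2 \leq \|\sum_k \delta_k (\text{something})\|$; concretely $\|\proj H\|_{2,1}^2 = \max_k \sum_j \delta_j H_{jk}^2$, which I treat as $\max_k$ of a scalar sum of independent bounded random variables — or uniformly as $\|\proj H\transp H \proj\|_{?}$. The cleanest route mirroring the paper's notation is to bound $\|\proj H\|_{2,1}$ through a rank-one decomposition whose summand bound is $\mu = \|H\|_{\infty,1}$ and whose mean is $\|W H\|_{2,1}^2$, giving the last term $K(e\|W H\|_{2,1}^2/v^2)^{v^2/\mu^2}$ with the substitution $v^2 = u^2$ wait, $t = v^2$.

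Then I would assemble: on the good event that $\|\proj H W\|^2 \le u^2$ and $\|\proj H\|_{2,1}^2 \le v^2$, the first (conditional) bound gives $\P(\|\proj H \proj'\| \ge r \mid \proj) \le K(e u^2/r^2)^{r^2/v^2}$; integrating over $\proj$ and adding the failure probabilities of the two bad events via a union bound produces exactly the three-term sum in \eqref{eq:2}. The main obstacle I anticipate is bookkeeping in the conditioning argument — Matrix Chernoff needs the summands independent \emph{and} the bounds $B,m$ deterministic, so in the first step $B$ and $m$ are random (they depend on $\proj$), and I must be careful to apply the inequality conditionally on $\proj$ with the \emph{event-restricted} deterministic surrogates $u,v$, then handle the complementary events separately rather than trying to take expectations of the Chernoff bound directly. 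A secondary technical point is making sure the passage to the Gram matrix $(\cdot)\transp(\cdot)$ does not lose the factor structure I need: since $\|MM\transp\| = \|M\|^2$ this is harmless, but the rank-one summands must genuinely be indexed by the independent Bernoulli variables, which forces the specific choice of whether to square on the left or the right at each stage.
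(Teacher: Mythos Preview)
Your proposal is correct and matches the paper's approach essentially step for step: condition on $\proj$ and apply Matrix Chernoff to the rank-one PSD sum $\proj H \proj' H\transp \proj = \sum_j \delta'_j Z_j Z_j\transp$ (with $Z_j$ the $j$-th column of $\proj H$), then control the two random parameters $\|\proj H W\|$ and $\|\proj H\|_{2,1}$ by two further Chernoff applications, and assemble via a union bound on the complementary events. Two cosmetic slips to clean up: the Gram matrix you want in stage one is $(\proj H \proj')(\proj H \proj')\transp$, not its transpose (your expansion is correct, only the label is flipped), and in that first application the Chernoff threshold is $t=r^2$, not $t=u^2$; the quantities $u^2$ and $v^2$ enter only as deterministic surrogates for $m$ and $B$ on the good event.
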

We begin by bounding $\P \left( \|\proj H \proj' \| \geq r \right)$.
\begin{lemma}
Let $H \in \R^{d \times K}$ be some matrix. Assume $I' \subseteq \mathbb{K}$ - leading to the selector matrix $R'$ - is chosen according to the Poisson sampling model with probabilities $p_1, \dots , p_K$ such that $\sum_{i = 1}^K p_i = S$. Further let $W$ denote the corresponding weight matrix. Then, for all $r > 0$
\begin{equation*}
    \P \left( \|\proj H \proj' \| \geq r \right)\leq  K  \left(e \frac{\| \proj H \weights \|^2}{r^2}  \right)^{\frac{r^2}{\|  \proj H \|^2_{2,1}}}.
\end{equation*}
\end{lemma}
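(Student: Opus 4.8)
The goal is to bound $\P\left(\|RH R'\| \geq r\right)$ using the Matrix Chernoff inequality, working conditionally on the selector matrix $R$ (equivalently on the support $I$) and only exploiting the randomness in $R'$. First I would condition on $R$ and write $RHR'$ as a sum of independent random matrices indexed by the columns selected by $R'$. Concretely, $RHR' = \sum_{k} \delta'_k (RH)_k e_k\transp$, where $(RH)_k$ denotes the $k$-th column of $RH$ and the $\delta'_k$ are independent Bernoulli variables with parameters $p_k$. Since this is not a sum of symmetric positive semidefinite matrices, I would apply the standard dilation/squaring trick: the operator norm $\|RHR'\|$ equals the square root of the largest eigenvalue of $(RHR')\transp(RHR') = \sum_k \delta'_k (RH)_k\transp(RH)_k\, e_ke_k\transp$ --- wait, more carefully, $(RHR')\transp (RHR') = R' H\transp R\transp R H R' = \sum_{k} \delta'_k \|(RH)_k\|_2^2\, e_k e_k\transp$ is already diagonal, hence the relevant sum $\sum_k X_k$ with $X_k := \delta'_k \|(RH)_k\|_2^2 \,e_ke_k\transp$ consists of independent, symmetric, positive semidefinite matrices in $\R^{K\times K}$.

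**Applying Matrix Chernoff.** With $X_k = \delta'_k \|(RH)_k\|_2^2\, e_ke_k\transp$ I would identify the two parameters needed for the Matrix Chernoff bound. For the uniform bound: $\|X_k\| = \delta'_k \|(RH)_k\|_2^2 \leq \max_k \|(RH)_k\|_2^2 = \|RH\|_{2,1}^2 =: B$. For the mean: $\sum_k \E X_k = \sum_k p_k \|(RH)_k\|_2^2 \,e_ke_k\transp$, whose operator norm is $\max_k p_k \|(RH)_k\|_2^2 = \|RH\weights\|_{2,1}^2$. But $\|RHW\|_{2,1}^2 = \max_k \|(RHW)_k\|_2^2 = \max_k p_k\|(RH)_k\|_2^2$, and since $RHW = (RH)W$ we indeed have this equals $\|RHW\|_{2,1}^2$; noting $\|RHW\|_{2,1} = \|RHW\|$ is not generally true, so I should instead just write $m := \|\sum_k \E X_k\| = \max_k p_k \|(RH)_k\|_2^2 \leq \|RHW\|_{2,1}^2 \leq \|RHW\|^2$, using $\|A\|_{2,1}\leq\|A\|$ for the final bookkeeping to match the statement's $\| R H \weights\|^2$. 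Then Matrix Chernoff with $t = r^2$ (since $\|RHR'\|\geq r \iff \|\sum_k X_k\| \geq r^2$) yields
\[
\P\left(\|RHR'\|\geq r \;\middle|\; R\right) \leq K\left(\frac{em}{r^2}\right)^{r^2/B} \leq K\left(e\frac{\|RH\weights\|^2}{r^2}\right)^{r^2/\|RH\|_{2,1}^2}.
\]

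**Finishing and the main obstacle.** Since the right-hand side bound holds for every fixed $R$, and the map $x\mapsto x^\alpha$ monotonicity lets the bound be stated in terms of the (random) quantities $\|RH\weights\|$ and $\|RH\|_{2,1}$ as written, I would conclude by simply noting the inequality holds conditionally hence can be stated as an almost-sure bound on the conditional probability --- which is exactly the form of the lemma. The main technical care-point is the direction of monotonicity in the Chernoff exponent: the bound $K(em/t)^{t/B}$ is only useful and only the "right" statement when $t \geq em$, i.e. $r^2 \geq e\|RH\weights\|^2$; for smaller $r$ the stated inequality is vacuous or must be checked to still hold formally (it does, since then $(e\|RH\weights\|^2/r^2)^{r^2/\|RH\|_{2,1}^2} \geq 1 \geq$ any probability only if the exponent base exceeds $1$, which it does). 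I would also need to double-check that the bound $\|\sum_k\E X_k\| \leq \|RH\weights\|^2$ is tight enough for the downstream application (it is, since subsequent lemmas further bound $\|RH\weights\|$ via $\|RHW\|\leq\|HW\|_{\infty,2}\cdot(\text{something})$). The genuinely delicate part is purely notational: ensuring the non-symmetric $RHR'$ is correctly reduced to a diagonal PSD sum so that Matrix Chernoff applies verbatim; once that reduction is in place, the rest is direct substitution.
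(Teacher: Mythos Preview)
Your reduction to a sum of independent PSD matrices contains a genuine error. You write
\[
(RHR')\transp (RHR') \;=\; R' H\transp R H R' \;=\; \sum_{k} \delta'_k \,\|(RH)_k\|_2^2 \; e_k e_k\transp,
\]
claiming this matrix is diagonal. It is not. Expanding $RHR' = \sum_k \delta'_k (RH)_k e_k\transp$ gives
\[
(RHR')\transp(RHR') \;=\; \sum_{j,k} \delta'_j\delta'_k\,\langle (RH)_j,(RH)_k\rangle\, e_j e_k\transp,
\]
which has nonzero off-diagonal entries whenever two selected columns of $RH$ fail to be orthogonal. Because $R'$ sits on \emph{both} outer sides, the cross terms $\delta'_j\delta'_k$ do not decouple, so this is not a sum of independent matrices at all, and Matrix Chernoff cannot be applied to it.

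The fix --- and this is exactly what the paper does --- is to square on the other side. Since $\|A\|^2 = \|AA\transp\| = \|A\transp A\|$, one has equally well
\[
\|RHR'\|^2 \;=\; \|(RHR')(RHR')\transp\| \;=\; \|RH R' H\transp R\| \;=\; \Big\|\sum_{j} \delta'_j\, Z_j Z_j\transp\Big\|,
\]
where $Z_j := (RH)_j$ is the $j$-th column of $RH$. Now $R'$ appears only once in the middle (using $R'R'=R'$), the summands $\delta'_j Z_jZ_j\transp$ are independent rank-one PSD matrices, and Matrix Chernoff applies directly with $B = \max_j\|Z_j\|_2^2 = \|RH\|_{2,1}^2$ and $m = \|\sum_j p_j Z_jZ_j\transp\| = \|RHW\|^2$. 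Your parameter identification and the rest of the bookkeeping are fine once this correction is made; you simply picked the wrong side to form the Gram matrix on.
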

\begin{proof}
Using that for any matrix A, $\| A A\transp\| = \| A\transp A \| = \|A\|^2$ we see
\begin{align*}
    \P \left(  \|\proj H \proj' \| >r \right) = \P \left( \|\proj H \proj' \|^2 >r^2  \right) = \P \left(  \| \proj H \proj' H\transp \proj \| >r^2 \right).
\end{align*}
Denoting by $Z_j$ the $j$-th column of $ \proj H$, we get
\begin{equation}\label{eq:4}
    \proj H \proj' H\transp \proj = \sum_{j=1}^K \delta'_j Z_j Z_j\transp.
\end{equation}
Then we have $\|Z_j Z_j\transp \| = \|Z_j \|_2^2 \leq \| \proj H \|_{2,1}^2$
and
\begin{equation*}
    \| \sum_{j=1}^K\E [\delta'_j Z_j Z\transp_j] \| = \| \sum_{j=1}^K p_j Z_j Z\transp_j \| =  \| \proj H \weights \weights H\transp \proj \| = \| \proj H \weights \|^2. 
\end{equation*}
As the right hand side of~\eqref{eq:4} is a sum of independent random variables, an application of the Matrix Chernoff inequality yields the result.
\end{proof}
Now we turn to bounding the two quantities $  \| \proj H \weights \|$ and $\| \proj H \|_{2,1} $ by the same argument as above.
\begin{lemma}
Let $H \in \R^{d \times K}$ be some matrix. Assume $I \subseteq \mathbb{K}$ is chosen according to the Poisson sampling model with probabilities $p_1, \dots , p_K$ such that $\sum_{i = 1}^K p_i = S$. Further let $W$ denote the corresponding weight matrix. Then, for all $u > 0$
\begin{equation*}
    \P \left(  \| \proj H \weights \| >u \right) \leq K \left(e \frac{\|\weights H \weights \|^2}{u^2} \right)^{\frac{u^2}{\|  H \weights \|_{\infty,2}^2}}.
\end{equation*}
\end{lemma}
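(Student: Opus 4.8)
The plan is to follow, essentially verbatim, the strategy used one lemma above for $\P(\|\proj H\proj'\|\ge r)$: square the operator norm to turn it into a Gram-type matrix, recognise that matrix as a sum of independent rank-one positive semidefinite summands indexed by the Bernoulli selectors $\delta_i$, and close with the Matrix Chernoff inequality.

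First I would symmetrise on the $\weights$-side. Since $\proj=\diag((\delta_i)_i)$ is a symmetric idempotent $0$-$1$ matrix, $\proj\transp\proj=\proj$, so
\[
\|\proj H\weights\|^2=\|(\proj H\weights)\transp(\proj H\weights)\|=\|\weights H\transp\proj H\weights\|,
\]
and therefore $\P(\|\proj H\weights\|>u)\le\P(\|\weights H\transp\proj H\weights\|\ge u^2)$. Writing $H^i$ for the $i$-th row of $H$ and setting $Z^i:=\weights(H^i)\transp$ (the transpose of the $i$-th row of $H\weights$), I would expand
\[
\weights H\transp\proj H\weights=\sum_{i=1}^K \delta_i\,Z^i(Z^i)\transp,
\]
which is a sum of independent, symmetric, positive semidefinite random matrices because the $\delta_i$ are independent.

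To invoke Matrix Chernoff I need the two standard bounds. For the uniform bound, $\|Z^i(Z^i)\transp\|=\|Z^i\|_2^2=\|(H\weights)^i\|_2^2\le\|H\weights\|_{\infty,2}^2=:B$. For the mean, since $\E\delta_i=p_i$ and $\diag((p_i)_i)=\weights^2$,
\[
\Big\|\sum_{i=1}^K\E\big[\delta_i Z^i(Z^i)\transp\big]\Big\|=\|\weights H\transp\weights^2 H\weights\|=\|(\weights H\weights)\transp(\weights H\weights)\|=\|\weights H\weights\|^2=:m.
\]
Matrix Chernoff with $t=u^2$ (the summands act on $\R^K$, so the dimensional factor is $K$) then yields $\P(\|\weights H\transp\proj H\weights\|\ge u^2)\le K(em/u^2)^{u^2/B}$, which is exactly the claimed inequality.

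I do not anticipate a genuine obstacle. The only point needing care is which side to symmetrise on: one must form $\weights H\transp\proj H\weights$ — where $\proj$ is sandwiched by the deterministic factors $\weights H\transp$ and $H\weights$ and hence contributes a single index — rather than $\proj H\weights^2 H\transp\proj$, which carries $\proj$ on both sides and so is not a sum of matrices independent in the $\delta_i$. This is the non-uniform counterpart of the identity $\proj H\proj'H\transp\proj=\sum_j\delta'_j Z_jZ_j\transp$ exploited in the previous lemma, and everything else is routine bookkeeping.
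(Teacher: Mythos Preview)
Your proposal is correct and follows essentially the same route as the paper: you square the norm to obtain $\|\weights H\transp\proj H\weights\|$, expand it as $\sum_j \delta_j Y_j\transp Y_j$ with $Y_j$ the $j$-th row of $H\weights$ (your $Z^i$ is just $Y_i\transp$), bound each summand by $\|H\weights\|_{\infty,2}^2$ and the mean by $\|\weights H\weights\|^2$, and finish with Matrix Chernoff. Your remark about symmetrising on the correct side so that $\proj$ appears only once is exactly the point, and the rest matches the paper's argument line for line.
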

\begin{proof}
Again using that for any matrix A, $\| A A\transp\| = \| A\transp A \| = \|A\|^2$ we see
\begin{align*}
    \P \left(  \| \proj H \weights \| >u \right) =  \P \left(  \| \proj H \weights \|^2 >u^2\right) = \P \left(  \| \weights H\transp \proj H \weights \| >u^2 \right).
\end{align*}
Now denote by $Y_j$ the $j$-th row of $H \weights$ then we get
\begin{equation}\label{eq:3}
    \weights H\transp \proj H \weights = \sum_{j=1}^K \delta_j Y_j\transp Y_j.
\end{equation}
We have $\|Y_j \transp Y_j \| = \|Y_j \|_2^2 \leq \| H \weights \|_{\infty,2}^2$
and
\begin{equation*}
    \| \sum_{j=1}^K\E [\delta_j Y_j\transp Y_j]\| = \| \sum_{j=1}^K p_j Y_j\transp Y_j \|  =  \| \weights H\transp \diag((p_k)_k)H \weights\| = \| \weights H \weights\|^2.
\end{equation*}
As the right hand side of~\eqref{eq:3} is a sum of independent random variables, an application of the Matrix Chernoff inequality yields the result.
\end{proof}
We now restate and prove Lemma~\ref{lem:1} for the Poisson sampling model. Note that by definition $\| \proj H\transp  \|_{2,1} = \| H  \proj \|_{\infty,2} = \| H_I \|_{\infty,2}$. Recall that by Lemma~\ref{lemma:f}
\[
\P_S \left( \| H _I \|_{\infty,2} \geq v \right) \leq 2 \; \P \left( \| H _I \|_{\infty,2} \geq v \right),
\]
so this result translates immediately to the rejective sampling model.
\begin{lemma}
Let $H \in \R^{d \times K}$ be some matrix. Assume $I \subseteq \mathbb{K}$ is chosen according to the Poisson sampling model with probabilities $p_1, \dots , p_K$ such that $\sum_{i = 1}^K p_i = S$. Further let $W$ denote the corresponding weight matrix. Then, for all $v > 0$
\begin{equation*}
    \P \left( \| H _I \|_{\infty,2} \geq v \right) \leq K \left( e \frac{\| H \weights \|_{\infty,2}^2}{v^2}  \right) ^{\frac{v^2}{\mu^2}}.
\end{equation*}
\end{lemma}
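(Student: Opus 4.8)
The plan is to recognise $\|H_I\|_{\infty,2}^2$ as a maximum, over the (at most $K$) rows of $H$, of a sum of independent, bounded, non-negative scalars, and then to apply the Matrix Chernoff inequality — in its degenerate $1\times 1$ form — separately to each row, finishing with a union bound. First I would write $\proj=\diag((\delta_k)_k)$ for the selector matrix of $I$ and observe that $H_I$ equals $H\proj$ up to deletion of zero columns, which leaves row $\ell_2$-norms unchanged, so
\[
\|H_I\|_{\infty,2}^2=\max_{k}\sum_{j\in I}H_{kj}^2=\max_{k}\sum_{j=1}^{K}\delta_j H_{kj}^2 .
\]
Consequently $\{\|H_I\|_{\infty,2}\ge v\}=\bigcup_{k}\{\sum_{j}\delta_j H_{kj}^2\ge v^2\}$, and a union bound over the rows reduces the problem to bounding $\P\big(\sum_{j=1}^K\delta_j H_{kj}^2\ge v^2\big)$ for a single fixed row $k$.

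For that fixed $k$ I would set $X_j:=\delta_j H_{kj}^2$. These are independent, and as $1\times 1$ matrices symmetric and positive semi-definite, with the uniform bound $0\le X_j\le \max_{i,l}H_{il}^2=\|H\|_{\infty,1}^2=\mu^2$; thus $B:=\mu^2$ is admissible. The mean term is
\[
\Big\|\sum_{j=1}^{K}\E X_j\Big\|=\sum_{j=1}^{K}p_j H_{kj}^2=\big\|(H\weights)^{k}\big\|_2^2\le \|H\weights\|_{\infty,2}^2=:m .
\]
Since the bound $d\,(em/t)^{t/B}$ in the Matrix Chernoff inequality is non-decreasing in $m$, I may use this (row-independent) value of $m$; plugging in $d=1$, $t=v^2$ and $B=\mu^2$ gives $\P\big(\sum_j X_j\ge v^2\big)\le\big(e\,\|H\weights\|_{\infty,2}^2/v^2\big)^{v^2/\mu^2}$. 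Multiplying by $K$ from the union bound yields the statement.

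I do not expect a real obstacle: this is the scalar special case of the Matrix Chernoff steps already carried out in the appendix for $\|\proj H\weights\|$ and $\|\proj H\|_{2,1}$. The two things to be careful about are that the per-row variance proxy $\|(H\weights)^{k}\|_2^2$ is dominated \emph{uniformly in $k$} by $\|H\weights\|_{\infty,2}^2$, and that the boundedness constant is the global $\mu^2=\|H\|_{\infty,1}^2$ — the coherence squared when $H=\dico\transp\dico-\I$ — rather than a support-dependent quantity. (The transfer to the rejective model in Lemma~\ref{lem:1} is not part of this statement; it follows afterwards from Lemma~\ref{lemma:f} applied to $f(I)=\one_{\{\|H_I\|_{\infty,2}\ge v\}}$, which is monotone under $I\subseteq J$ since enlarging the support only enlarges the row norms of $H_I$.)
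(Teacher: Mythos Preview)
Your proposal is correct and is essentially the paper's own argument: write $\|H_I\|_{\infty,2}^2=\max_k\sum_j\delta_jH_{kj}^2$, apply the (scalar) Chernoff inequality to each row with $B=\mu^2$ and mean bounded by $\|H\weights\|_{\infty,2}^2$, then union bound over the rows. The only cosmetic difference is that the paper calls it the ``standard Chernoff inequality'' while you phrase it as the $1\times 1$ case of Matrix Chernoff, which is of course the same thing.
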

\begin{proof}
We begin by writing $\|H_I\|$ as the maximum of a sum of independent random variables
\begin{equation*}
    \| H_I \|_{\infty,2}^2 = \max_{i\in \{1,...,K \}}\sum_{j = 1}^K \delta_j H_{ij}^2.
\end{equation*}
Now we fix $i \in \{1,..,K\}$ and apply the standard Chernoff inequality
\[
\P\left( \sum_{j = 1}^K \delta_j H_{ji}^2 \geq v^2 \right) \leq \left( e \frac{\| H \weights \|_{\infty,2}^2}{v^2}  \right) ^{\frac{v^2}{\mu^2}}.
\]
Taking a union bound yields the result.
\end{proof}
Finally we can put everything together and prove our main result. The main difficulty lies in picking $v$ and $u$ such as to minimise the probability bound in~\eqref{eq:2}. 
\begin{proof}[Theorem \ref{them:1}] Set
\begin{align*}
\alpha :=  \min\left\{\frac{r^2}{4e^2\|\weights H \|_{2,1}^2},\frac{r^2}{4e^2\| H \weights \|_{\infty,2}^2},\frac{r}{2\mu} \right\}  &&  v^2 := \frac{r^2}{4 \alpha} && u^2 := \frac{r^2}{4 e^2 }.
\end{align*}
Now these definitions and the assumption $r^2 \geq 4 e^4 \| \weights H \weights \|^2$ imply the following 6 inequalities
\begin{align*}
    \frac{u^2}{\|H \weights \|_{\infty,2}^2}= \frac{r^2}{4 e^2 \| H \weights \|_{\infty,2}^2} &\geq \alpha  & e\frac{\| \weights H   \weights \|^2}{u^2} = \frac{4 e^{3} \|  \weights H  \weights \|^2}{r^2}&\leq e^{-1} \\
    \frac{v^2}{\mu^2} = \frac{r^2 }{  4 \alpha \mu^2} &\geq \alpha & e\frac{\| \weights H  \|_{2,1}^2}{v^2} = \frac{4 e \| \weights H \|_{2,1}^2 \alpha}{r^2} &\leq e^{-1} \\
    \frac{r^2}{4 v^2} = \frac{4 r^2 \alpha}{4 r^2} &= \alpha & e \frac{4 u^2}{r^2} = \frac{4 e r^2}{4 e^2 r^2} &= e^{-1}.
\end{align*}
So
\[
\P_S \left ( \| \proj H \proj \| \geq r  \right) \leq 2 \P\left ( \| \proj H\proj \| \geq r  \right) \leq 72 \P \left ( \| \proj H\proj' \| \geq r/2  \right),
\]
together with 
\begin{align*}
    \P \left( \|\proj H\proj' \| \geq r \right)\leq & K \left( \left(e \frac{4u^2}{r^2}  \right)^{\frac{r^2}{4v^2}} +\left(e \frac{\| \weights H \weights \|^2}{u^2} \right)^{\frac{u^2}{\| H  \weights \|_{\infty,2}^2}} + \left( e  \frac{\|\weights H \|_{2,1}^2}{v^2}  \right) ^{\frac{v^2}{\mu^2}} \right)
\end{align*}
shows that
\[
\P_S \left ( \| \proj H \proj \| \geq r  \right) \leq 216 K e^{-\alpha}.
\]
\end{proof}
\begin{remark}
In the published version of Chr\'etien and Darses~\cite{chda12} there is a tiny bug in the proof of Proposition 4.2 in the way the variables $u$ and $v$ are balanced. In particular, for very small $\mu$, inequality 4.17 may be violated. $v^2$ should instead be defined via an equality in 4.15, whereas 4.14 should be an inequality.
\end{remark}
For convenience we restate an easy consequence of Hoeffding's inequality.
\begin{lemma}[Hoeffding]\label{hoeff}
    Let $M \in \R^{K \times S}$ be a matrix and $x \in \R^{S}$ such that $\signop(x) \in \R^{S}$ is an independent Rademacher sequence. Then, for all $t \geq 0$
\[
\P\left(\|M x\|_{\infty } \geq t\right) \leq 2K \exp{\left( -\frac{t^2}{2 \| M \|_{\infty,2}^2\| x\|_{\infty}^2} \right)}.
\]
\end{lemma}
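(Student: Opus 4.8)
The plan is to reduce the claim to the classical scalar Hoeffding bound applied row by row, followed by a union bound over the $K$ rows. Write $\sigma := \signop(x) \in \{-1,+1\}^S$, so that by assumption $\sigma_1, \dots, \sigma_S$ are independent Rademacher variables and $x_j = \sigma_j |x_j|$ with the magnitudes $|x_j|$ deterministic. Fix a row index $i \in \{1,\dots,K\}$ and denote by $M^i$ the $i$-th row of $M$. Then
\[
(Mx)_i = \sum_{j=1}^S M_{ij} x_j = \sum_{j=1}^S M_{ij}\, |x_j|\, \sigma_j
\]
is a sum of $S$ independent, mean-zero random variables, the $j$-th of which takes values in an interval of length $2|M_{ij}||x_j|$.

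The key step is then a direct application of Hoeffding's inequality to this scalar sum, which gives
\[
\P\big(|(Mx)_i| \geq t\big) \leq 2\exp\!\left(-\frac{t^2}{2\sum_{j=1}^S M_{ij}^2 x_j^2}\right).
\]
To make this bound uniform in $i$, I would next estimate the denominator by pulling out the largest magnitude: $\sum_j M_{ij}^2 x_j^2 \leq \|x\|_\infty^2 \sum_j M_{ij}^2 = \|x\|_\infty^2 \|M^i\|_2^2 \leq \|x\|_\infty^2 \|M\|_{\infty,2}^2$, where the final inequality is just the definition $\|M\|_{\infty,2} = \max_i \|M^i\|_2$. Since the exponential bound is monotone in its denominator, this yields $\P(|(Mx)_i| \geq t) \leq 2\exp(-t^2/(2\|M\|_{\infty,2}^2\|x\|_\infty^2))$ for every $i$.

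Finally I would take a union bound over the rows, using $\P(\|Mx\|_\infty \geq t) = \P(\exists\, i : |(Mx)_i| \geq t) \leq \sum_{i=1}^K \P(|(Mx)_i| \geq t)$, which is at most $2K\exp(-t^2/(2\|M\|_{\infty,2}^2\|x\|_\infty^2))$, as claimed. There is no genuine obstacle here; the only point worth stating carefully is that all the randomness lies in the signs $\sigma_j$ while the $|x_j|$ are constants, so the row sums really are sums of independent bounded variables and the scalar Hoeffding inequality applies verbatim.
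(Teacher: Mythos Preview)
Your proposal is correct and follows essentially the same route as the paper: apply scalar Hoeffding's inequality to each entry $(Mx)_i$, bound $\sum_j M_{ij}^2 x_j^2 \leq \|x\|_\infty^2 \|M^i\|_2^2$, and finish with a union bound over the $K$ rows using $\|M\|_{\infty,2}=\max_i\|M^i\|_2$. The paper's proof is a terse version of exactly this argument.
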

\begin{proof} We apply Hoeffding's inequality to the $k$-th entry of $Mx$, which yields
\[
\P\left( |{\textstyle \sum_j M_{kj}x_j}| \geq t\right) \leq 2 \exp{\left( -\frac{t^2}{2\sum_j M^2_{kj}x^2_j} \right)} \leq 2 \exp{\left( -\frac{t^2}{2\|x\|_\infty^2 \|M^k\|_2^2}\right)}.
\]
The statement follows using a union bound and the identity $\| M \|_{\infty,2} = \max_k \|M^k\|_2$.
\end{proof}

\section{Sensing matrices}
\begin{lemma}[Thresholding with sensing matrix]\label{lem:thr_sens}
Assume that the signals follow the model in \eqref{signal_model}, where the support $I \subseteq\mathbb{K}$ is chosen according to the rejective sampling model with probabilities $p_1, \dots , p_K$ such that $\sum_{i = 1}^K p_i = S$. Further let $W$ denote the corresponding weight matrix and denote by $H := \pdico\transp \dico - \mathbb{I}$ the hollow cross-Gram matrix. If
\begin{align*}
    \| H \|^2_{\infty,1} \leq \frac{\| c \|_{\min}^2}{8 \| c \|_{\max}^2 \log( 4K/\varepsilon)}, \quad \quad \text{and} \quad \quad
    \| H \weights \|^2_{\infty,2} \leq \frac{\| c \|_{\min}^2}{8 e^2 \| c \|_{\max}^2\log(4K/\varepsilon)},
\end{align*}
then thresholding with sensing dictionary $\pdico$ recovers the support with probability at least $1- \varepsilon$.
\end{lemma}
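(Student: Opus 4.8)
The plan is to mirror the proof of Theorem~\ref{them:thresh} almost verbatim, with the hollow Gram matrix $H = \dico\transp\dico - \mathbb{I}$ replaced by the hollow cross-Gram matrix $H = \pdico\transp\dico - \mathbb{I}$, and to track the one place where the symmetry/normalisation of the Gram matrix was actually used. Recall that thresholding with sensing dictionary $\pdico$ succeeds in recovering the full support precisely when $\|\pdico_I\transp y\|_{\min} > \|\pdico_{I^c}\transp y\|_{\infty}$. Plugging in $y = \dico_I x_I$ and writing $\pdico_I\transp\dico_I = \mathbb{I} + (\pdico_I\transp\dico_I - \mathbb{I}) = \mathbb{I} + H_{I,I}$ (here we use the defining property $\ip{\patom_k}{\atom_k} = 1$, which guarantees the diagonal of $\pdico\transp\dico$ is the identity and hence that $H_{I,I}$ has zero diagonal, exactly as in the self-Gram case), we get
\[
\|\pdico_I\transp y\|_{\min} \geq \|c\|_{\min} - \|H_{I,I}x_I\|_\infty \geq \|c\|_{\min} - \|H_I x_I\|_\infty,
\]
while $\|\pdico_{I^c}\transp y\|_\infty = \|\pdico_{I^c}\transp\dico_I x_I\|_\infty = \|(H)_{I^c,I}\, x_I\|_\infty \leq \|H_I x_I\|_\infty$, where $H_I$ denotes the column restriction of $H$ to $I$ (so both $H_{I,I}$ and $H_{I^c,I}$ are row-submatrices of $H_I$). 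Thus the failure probability is bounded by $\P_y(\|c\|_{\min} < 2\|H_I x_I\|_\infty)$, exactly as before.

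From this point the argument is identical to that of Theorem~\ref{them:thresh}. Since $x_{i_k} = c_k\sigma_k$ with $\sigma$ an independent Rademacher sequence, we condition on the support $I$, apply Hoeffding's inequality (Lemma~\ref{hoeff}) to each entry of $H_I\sigma$ to control $\|H_I x_I\|_\infty$ in terms of $\|H_I\|_{\infty,2}$ and $\|c\|_{\max}$, and split off the event $\{\|H_I\|_{\infty,2} \geq \gamma\}$, whose probability we bound by Lemma~\ref{lem:1}. This gives
\[
\P_y(\text{failure}) \leq 2K\exp\!\left(-\frac{\|c\|_{\min}^2}{8\|c\|_{\max}^2\gamma^2}\right) + 2K\left(e\frac{\|H\weights\|_{\infty,2}^2}{\gamma^2}\right)^{\gamma^2/\mu^2}.
\]
The only substantive difference from Theorem~\ref{them:thresh} is that in the cross-Gram setting $\|H\|_{\infty,1}$ — the largest absolute entry of $\pdico\transp\dico$ off the diagonal — is no longer bounded by the coherence $\mu$ of $\dico$ (since $\pdico$ need not have unit columns and need not be $\dico$), so it must appear as a separate hypothesis. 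Accordingly we set $\gamma^2 = \frac{\|c\|_{\min}^2}{8\|c\|_{\max}^2\log(4K/\eps)}$; the first term is then at most $\eps/2$ by the choice of $\gamma$, and the second term is at most $\eps/2$ provided the exponent $\gamma^2/\mu^2 \geq 1$ — which is where the hypothesis $\|H\|_{\infty,1}^2 \leq \frac{\|c\|_{\min}^2}{8\|c\|_{\max}^2\log(4K/\eps)}$ enters, playing the role the coherence bound played before — together with $e\|H\weights\|_{\infty,2}^2/\gamma^2 \leq 1/4$, which follows from the second hypothesis $\|H\weights\|_{\infty,2}^2 \leq \frac{\|c\|_{\min}^2}{8e^2\|c\|_{\max}^2\log(4K/\eps)}$.

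There is essentially no obstacle here: the proof is a direct transcription of Theorem~\ref{them:thresh}. The one point that requires a moment's care — and is worth stating explicitly — is why $H = \pdico\transp\dico - \mathbb{I}$ still has zero diagonal and why Lemma~\ref{lem:1} still applies to it (it does: Lemma~\ref{lem:1} is stated for an arbitrary matrix $H \in \R^{d\times K}$, with $\mu$ in that lemma to be read as $\|H\|_{\infty,1}$, so here we instantiate it with the cross-Gram $H$ and its own $\|H\|_{\infty,1}$). The other point is bookkeeping: in the Hoeffding step one must note $|\ip{\patom_k}{\dico_I}x_I|$ involves only the off-diagonal entries of $\pdico\transp\dico$ restricted to rows in $I$, so that both the "noise" term $\|H_{I,I}x_I\|_\infty$ and the "false-atom" term $\|H_{I^c,I}x_I\|_\infty$ are governed by the single quantity $\|H_I\|_{\infty,2}$, which is what makes the two-term bound above go through unchanged.
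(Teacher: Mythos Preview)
Your proposal is correct and takes exactly the paper's approach: the paper's own proof of this lemma is literally ``repeat the steps from the proof of Theorem~\ref{them:thresh} with the obvious changes,'' and you have spelled out precisely those changes (replacing the hollow Gram matrix by the hollow cross-Gram matrix, using $\ip{\patom_k}{\atom_k}=1$ to keep the diagonal equal to the identity, and reading the $\mu$ in Lemma~\ref{lem:1} as $\|H\|_{\infty,1}$). Your write-up is in fact more explicit than the paper's one-line proof.
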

\begin{proof}
Now by definition of the algorithm, thresholding recovers the full support if
\[
\| \pdico_{I^c}\transp y \|_{\max} <  \| \pdico_I\transp y\|_{\min}.
\]
Repeating the steps from the proof of Theorem~\ref{them:thresh} with the obvious changes we obtain the result.
\end{proof}

\begin{lemma}[OMP with sensing matrix]\label{lem:omp_sens}
Assume that the signals follow the model in \eqref{signal_model}, where the support $I \subseteq\mathbb{K}$ is chosen according to the rejective sampling model with probabilities $p_1, \dots , p_K$ such that $\sum_{i = 1}^K p_i = S$. Further let $W$ denote the corresponding weight matrix. Let $\pdico$ be a sensing matrix and assume the hollow Gram-matrix $H = \dico\transp \dico - \mathbb{I}$ satisfies $\| \weights H \weights\|_{2,2} \leq \frac{1}{4 e^2}$. If
\begin{align*}
\| H \weights \|_{\infty,2}^2 &\leq   \frac{1}{16e^2\log(216 K/\varepsilon)}  \\ 
\| H  \|_{\infty,1} &\leq \frac{1}{4\log(218K/\varepsilon)} \\
\| (\pdico \transp \dico - \mathbb{I}) \weights \|_{\infty,2}^2 &\leq   \min_{L \subseteq \{ 1, \dots, S \} }\frac{\|c_L \|^2_{\infty}}{16 e^2\|c_L \|_2^2} \\
\| \pdico \transp \dico - \mathbb{I}  \|_{\infty,1}  &\leq \min_{L \subseteq \{ 1, \dots, S \} }\frac{\|c_L \|_{\infty}}{4\|c_L \|_2 \sqrt{\log(218 K/\varepsilon)}},
\end{align*} 
then OMP with sensing matrix $\pdico$ recovers the correct support with probability at least $1 - \varepsilon$.
\end{lemma}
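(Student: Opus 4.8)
The plan is to follow the proof of Theorem~\ref{them:omp} essentially verbatim, the only change being that the atom‑selection step now uses the sensing dictionary, so the ``hollow cross‑Gram'' matrix $\bar H := \pdico\transp\dico - \mathbb{I}$ takes over, in every place that refers to inner products with the residual, the role played by $H = \dico\transp\dico - \mathbb{I}$, while $H$ itself still governs the conditioning of the already‑selected sub‑dictionary. Concretely, I would fix a step at which OMP with sensing matrix has produced a correct partial support $J \subseteq I$, set $L := I \setminus J$, recall $r_J = \dico_L x_L - \dico_J(\dico_J\transp\dico_J)^{-1}\dico_J\transp\dico_L x_L$, and observe that (restricting the $\argmax$ to indices not yet chosen, as usual) a further correct atom is picked as soon as $\|\pdico_{I^c}\transp r_J\|_{\infty} < \|\pdico_L\transp r_J\|_{\infty}$.

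Next I would expand both sides. Setting $w := (\dico_J\transp\dico_J)^{-1}\dico_J\transp\dico_L x_L$ and using that $\diag(\pdico\transp\dico) = \mathbb{I}$ — so $\pdico_L\transp\dico_L = \mathbb{I} + (\pdico_L\transp\dico_L - \mathbb{I})$ with the second summand hollow — the triangle inequality reduces the task to showing
\[
\|\pdico_{I^c}\transp\dico_L x_L\|_\infty + \|\pdico_{I^c}\transp\dico_J w\|_\infty + \|(\pdico_L\transp\dico_L - \mathbb{I})x_L\|_\infty + \|\pdico_L\transp\dico_J w\|_\infty < \|x_L\|_\infty .
\]
Since $I^c$, $J$, $L$ are pairwise disjoint and $\pdico_L\transp\dico_L - \mathbb{I}$ is hollow, each of the four $\pdico$–$\dico$ matrices above is an off‑diagonal sub‑block of $\bar H$, hence has $\|\cdot\|_{\infty,2} \leq \|\bar H_I\|_{\infty,2}$; and $\|w\|_2 \leq \|(\dico_J\transp\dico_J)^{-1}\|_{2,2}\|\dico_J\transp\dico_L\|_{2,2}\|x_L\|_2 \leq \tfrac{\vartheta_I}{1-\vartheta_I}\|x_L\|_2$, with $\vartheta_I := \|\dico_I\transp\dico_I - \mathbb{I}\|_{2,2}$, using $\|\dico_J\transp\dico_L\|_{2,2} \leq \vartheta_I$ and $\|(\dico_J\transp\dico_J)^{-1}\|_{2,2} \leq (1-\vartheta_I)^{-1}$. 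On the event $\{\vartheta_I < 1/2\}$ the resulting factor $2 + 2\tfrac{\vartheta_I}{1-\vartheta_I}$ is at most $4$, so, using $\|x_L\|_\infty/\|x_L\|_2 = \|c_L\|_\infty/\|c_L\|_2$, OMP with sensing matrix picks a correct atom at \emph{every} step provided $\|\bar H_I\|_{\infty,2} < \gamma := \min_{L\subseteq\{1,\dots,S\}} \tfrac{\|c_L\|_\infty}{4\|c_L\|_2}$.

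It then suffices to bound the two bad events by a union bound: the failure probability is at most $\P_S(\vartheta_I \geq 1/2) + \P_S(\|\bar H_I\|_{\infty,2} \geq \gamma)$. For the first term I apply Corollary~\ref{cor:1} with $r = 1/2$; its hypothesis $r \geq 2e^2\|\weights H\weights\|_{2,2}$ is precisely the assumed $\|\weights H\weights\|_{2,2} \leq \tfrac{1}{4e^2}$, and the conditions $\|H\weights\|_{\infty,2}^2 \leq (16e^2\log(216K/\varepsilon))^{-1}$ and $\|H\|_{\infty,1} \leq (4\log(218K/\varepsilon))^{-1}$ then make this term a constant multiple of $\varepsilon$. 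For the second term I apply Lemma~\ref{lem:1} to the matrix $\bar H$ at level $v = \gamma$ — so the exponent is $\gamma^2/\|\bar H\|_{\infty,1}^2$ with $\|\bar H\|_{\infty,1}$ the cross‑coherence — and the conditions $\|(\pdico\transp\dico - \mathbb{I})\weights\|_{\infty,2}^2 \leq \min_L \tfrac{\|c_L\|_\infty^2}{16e^2\|c_L\|_2^2} = \gamma^2/e^2$ and $\|\pdico\transp\dico - \mathbb{I}\|_{\infty,1} \leq \min_L \tfrac{\|c_L\|_\infty}{4\|c_L\|_2\sqrt{\log(218K/\varepsilon)}}$ force $e\|\bar H\weights\|_{\infty,2}^2/\gamma^2 \leq 1/e$ and $\gamma^2/\|\bar H\|_{\infty,1}^2 \geq \log(218K/\varepsilon)$; summing, the failure probability stays below $\varepsilon$.

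The only genuinely new point compared with Theorem~\ref{them:omp} — and the thing one must be careful about — is the bookkeeping around the sensing dictionary: $\pdico_L\transp\dico_L - \mathbb{I}$ is hollow solely because of the normalisation $\ip{\patom_k}{\atom_k} = 1$ (the columns of $\pdico$ are \emph{not} unit‑norm); every $\pdico$–$\dico$ cross‑block occurring above must be checked to avoid the diagonal so that it is legitimately bounded by $\|\bar H_I\|_{\infty,2}$, which is the quantity Lemma~\ref{lem:1} controls (rather than the full row norm of $\pdico\transp\dico$); and, unlike for plain OMP, the residual $r_J$ is orthogonal to $\mathrm{col}(\dico_J)$ but not to the sensing atoms $\patom_k$ with $k \in J$, which is why the selection step is restricted to indices outside the current support. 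Once these are in place, the clean split between the $\dico$‑factors (governed by $\vartheta_I$, hence Corollary~\ref{cor:1}) and the $\bar H$‑factors (governed by Lemma~\ref{lem:1}) is exactly what produces the two conditions on $H$ and the two conditions on $\pdico\transp\dico - \mathbb{I}$ in the statement.
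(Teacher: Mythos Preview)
Your proposal is correct and follows exactly the approach the paper intends: the paper's own proof merely states the sensing‑dictionary version of the OMP success condition and then says ``Repeating the steps from the proof of Theorem~\ref{them:omp} with the obvious changes we obtain the result,'' and you have carried out precisely those obvious changes --- replacing $H$ by $\bar H=\pdico\transp\dico-\mathbb{I}$ in the four cross‑block bounds while keeping $H$ for the conditioning term $\vartheta_I$, then invoking Corollary~\ref{cor:1} and Lemma~\ref{lem:1} for the two bad events. Your added remarks about why $\pdico_L\transp\dico_L-\mathbb{I}$ is hollow (via $\ip{\patom_k}{\atom_k}=1$) and about restricting the $\argmax$ to unused indices are exactly the bookkeeping points one needs to verify when making the ``obvious changes'' precise.
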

\begin{proof}
Set $L := I \setminus J$. By definition, OMP finds another correct atom in the next step if
\begin{equation*}
\| \pdico_{I^c }\transp( \dico_{L} x_{L} -\dico_{J} (\dico_{J}\transp\dico_{J})^{-1} \dico_{J}\transp \dico_{L} x_{L})  \|_{\infty} < \| \pdico_{L}\transp (\dico_{L} x_{L} -\dico_{J} (\dico_{J}\transp\dico_{J})^{-1} \dico_{J}\transp \dico_{L} x_{L} ) \|_{\infty}.
\end{equation*}
Repeating the steps from the proof of Theorem~\ref{them:omp} with the obvious changes we obtain the result.
\end{proof}


\end{document}